\title{Chow Rings of Fine Quiver Moduli are Tautologically Presented}
\author{H. Franzen\footnote{\myaddress}}
\date{}
\begin{document}
	\maketitle
	\begin{abstract}
	A result of A. King and C. Walter asserts that the Chow ring of a fine quiver moduli space is generated by the Chern classes of universal bundles if the quiver is acyclic. We will show that defining relations between these Chern classes arise geometrically as degeneracy loci associated to the universal representation.
\end{abstract}

	\section*{Introduction\markboth{}{Introduction}}
Many interesting moduli spaces are varieties parametrizing stable objects in an abelian category up to isomorphism \cite{Newstead:78}. In this paper, we study moduli spaces of quiver representations \cite{King:94}. These varieties can be regarded as an organizing tool for the representation theory of a given quiver. One would like to gather as much information as possible about the structure of quiver moduli and relate it to the representation theory of the quiver.\\
Just like the classification problem for representations of quivers is a hard problem in general, their moduli spaces are very difficult as well. Therefore, we concentrate on describing their invariants and relating these to representation theory. In case we are dealing with a fine moduli space of an acyclic quiver, where the moduli space is a non-singular projective variety, the Chow ring is an invariant with many nice properties \cite{Fulton:98}. For (fine) quiver moduli, we know that the Chow ring is isomorphic to the cohomology ring, thanks to \cite[Theorem 3]{KW:95}. We will give a presentation of this ring relying on the representation theory of the quiver.

So far, it is known that, if the quiver is acyclic, Chow rings of fine quiver moduli are tautologically generated, which means that they are generated by Chern classes of the universal representation \cite[Theorem 3]{KW:95}. In this context, a moduli space is fine if such a universal representation exists. Furthermore, there are ways to compute the Betti numbers of quiver moduli, i.e. the dimensions of the graded pieces of the Chow ring, e.g. by resolved Harder-Narasimhan recursion \cite[Theorem 6.7]{Reineke:03} or by the MPS formula \cite[Corollary 3.7]{RSW:12}.\\
It is quite natural to ask for a presentation of these rings and whether such a presentation arises as tautologically as the generators do. In fact, it is reasonable to assume that this is true: For the Grassmannian of $n$-dimensional subspaces in an $r$-dimensional vector space, which can be regarded as a moduli space for the $r$-arrow Kronecker quiver (with a suitable dimension vector), the Chow ring is just made that way. It is generated by the Chern classes of the universal rank $(n-r)$-bundle, which obey \emph{only} the obvious relations coming from the fact that it is a quotient bundle of the trivial bundle of rank $r$ \cite[Theorem 1]{Groth:58:inter}.\\
Relying on the construction of the moduli space as a Geometric Invariant Theory quotient \cite{GIT:94}, and restricting to a certain stability condition, we might apply a celebrated result \cite[Theorem 4.4]{ES} which gives a presentation for the Chow ring of a geometric quotient. However, this result turns out to be not well-adapted to the representation-theoretic context of quiver moduli: As generators and relations in this result come from choosing certain maximal tori and relating the toric geometry to the original quotient, we lose track of the representation theory of the quiver.\\
Regarding the moduli space as a symplectic reduction \cite[Corollary 6.2]{King:94}, we can use results of \cite{Kirwan:84} to give a description of its cohomology ring. For fine quiver moduli, this is the same because the cycle map is an isomorphism. Yet, calculating the Kirwan ideal of a symplectic reduction seems to be a difficult task. Even a harmless looking moduli space, like $m$ ordered points in $\P^1$ up to $\PGl_2$-action, requires substantial effort and elaborate methods to obtain a presentation of the cohomology ring via the Kirwan ideal \cite[Theorem 5.5]{HK:98}.

The approach to obtaining relations between Chern classes of universal bundles, i.e. bundles occurring in the universal representation, is to try to reformulate the notion of stability in terms of the universal representation. The universal representation allows us to keep track of some representation-theoretic facts if we pass to the level of Chow rings.\\
The (slope-)stability condition of the moduli space tells us which sub-dimension vectors of the given dimension vector are allowed and which ones are forbidden. We will find for every forbidden sub-dimension vector one forbidden polynomial in the Chern \emph{roots} of the universal bundles. Every forbidden polynomial gives a set of polynomials in the Chern classes of universal bundles that we call the \emph{tautological relations}. The main result of this paper, Theorem \ref{thm}, states that the tautological relations give - up to one extra linear relation, arising for trivial reasons - a presentation of the Chow ring. As this linear relation is fairly easy to handle, we should not worry too much about it.\\
As an application, we obtain a presentation for the moduli space of $m$ ordered points in $\P^1$ up to $\PGl_2$-action that resembles the one obtained in \cite{HK:98}. Choosing a different representative of the universal representation, we are also able to give a presentation that preserves the natural action of the symmetric group on the Chow ring that comes from permuting the points.\\
Applying the result to the $r$-arrow Kronecker quiver (with dimension vector $(1,r-n)$) gives a defining set of relations for the Chow ring of the Grassmannian of $n$-dimensional subspaces in an $r$-dimensional vector space that can easily be transformed into the well-known presentation of \cite[Theorem 1]{Groth:58:inter}. We also give an example of a 6-dimensional variety, a so-called Kronecker module (cf. \cite[III-VI]{Drezet:88} or \cite[Section 6]{ES}), for which an explicit presentation of the Chow ring was not previously known.

To find the tautological relations, we have to pass to a complete flag bundle where (the pull-back of) every universal bundle splits. The reason for this is that, in general - unlike in the case of the Grassmannian, for example - a stability requirement cannot be formulated directly in terms of the universal representation working only over the moduli space itself. Yet, in the flag bundle, we can find relations in the Chern roots of the universal bundles (cf. Corollary \ref{ko1}). Expressing these relations in terms of a basis of the Chow ring of the flag bundle over its base yields the tautological relations.\\
The proof of the result that the Chow ring is, in fact, presented by the tautological relations (and the linear relation) proceeds in two steps. It is similar to the proof of the main result in \cite{ES}. First, we show that this is true in the toric case, i.e. the case where the dimension vector consists of ones entirely. Therefore, it is essential to obtain a detailed description of the toric fan of the moduli space (Lemma \ref{simplex} and Proposition \ref{toric_fans}, cf. also \cite{Hille:98}). As a second step, we reduce the arbitrary case to the toric one by using a covering quiver of the original quiver. We regard the moduli space as a geometric quotient by some reductive algebraic group. Hence, passing to the covering quiver amounts to choosing a maximal torus of the group. The crucial part is to control the difference between stability with respect to the group and stability for the torus, when passing to the Chow ring. We do this entirely algebraically by applying Poincar\'{e} duality and some 
knowledge about the action of the Weyl group on the Chow ring (Lemmas \ref{anti_invar} 
and \ref{degrees}). Finally, we prove that the ring which is claimed to be isomorphic to the Chow ring also fulfills Poincar\'{e} duality (Lemma \ref{perfPair}). This forces the desired isomorphism.

The paper is organized as follows: In Section 1, we recall the notions and facts on quiver representations and their moduli spaces. We explain where tautological relations come from in Section 2, whereas Section 3 is devoted to the proof of the main result Theorem \ref{thm}. Afterwards, we give applications of this very result in Section 4.

\begin{ack*}
	While doing this research, I was supported by the priority program SPP 1388 ``Representation Theory'' of the DFG (German Research Foundation). I am grateful to M. Reineke for his patient support and for very inspiring discussions. I would also like to thank C. Chindris for pointing out that Proposition \ref{thm_toric} holds true over a ground field of arbitrary characteristic and with integral coefficients.
\end{ack*}
	\section{Quiver representations and their moduli spaces}
\begin{conv*}
	Fix an algebraically closed field $\kk$ of characteristic $\cha \kk = 0$. All vector spaces will be $\kk$-vector spaces and all varieties will be $\kk$-varieties. When we talk about a point $x$ of a variety $X$, we mean a $\kk$-valued point of $X$.
\end{conv*}
Let $Q$ be a quiver. In our context, quivers are assumed to have a finite set of vertices $Q_0$ and a finite set of arrows $Q_1$. Most of the time, we will also assume $Q$ to be connected. Let $X$ be a variety. A \textbf{representation} $M$ of $Q$ over $X$ consists of vector bundles $M_i$ on $X$ for every $i \in Q_0$ and homomorphisms $M_\alpha: M_i \to M_j$ of vector bundles on $X$ for every arrow $\alpha: i \to j$ of $Q$. If we speak about a representation of $Q$ without mentioning the variety it lives over, we mean a representation over the variety $\Spec \kk$. Thus, this representation consists of finitely generated vector spaces and linear maps between them.\\
A homomorphism of representations of $Q$ over $X$ is defined in the obvious way, so we obtain the category $\cat{Rep}_X(Q)$ of representations of $Q$ over $X$ (cf. \cite[Chapter II]{ASS:06} for details on representations of quivers). Furthermore, for any morphism $f: Y \to X$ of varieties and any representation $M$ of $Q$ over $X$, we can form the pull-back via $f$ of every $M_i$ and get a representation $f^*M$ of $Q$ over $Y$. This pull-back behaves functorially (see \cite{GK:05} for more). If $x$ is a point of $X$ and $j: \Spec \kk \to X$ denotes the corresponding closed embedding, the fiber $M_x := j^*M$ is a representation of $Q$.\\
Let $M$ be a representation of $Q$ over a variety $X$ and denote by $d_i$ the rank of the vector bundle $M_i$. The family $d = (d_i \mid i \in Q_0)$ is called the \textbf{dimension vector} of $M$.

We fix a quiver setting $(Q,d)$, that is a pair consisting of a quiver $Q$ and a dimension vector $d = (d_i \mid i \in Q_0)$. We are interested in a variety that parametrizes isomorphism classes of representations of $Q$ with dimension vector $d$ in a natural way. That is why we consider the contravariant functor $\Rep(Q,d)$ from the category of varieties to the category of sets that associates to every variety $X$ the set $\Rep_X(Q,d)$ of equivalence classes of representations of $Q$ over $X$ with dimension vector $d$ (for short: a representation of $(Q,d)$ over $X$). We call representations of $Q$ over $X$ \textbf{equivalent} if all their fibers are isomorphic. This functor is usually not representable, we have to pass to (equivalence classes of) (semi-)stable representations.

\begin{defn*}
	A \textbf{stability condition} for a quiver $Q$ is a $\Z$-linear map $\theta: \Z^{Q_0} \to \Z$.
\end{defn*}

Fix a stability condition $\theta$ for $Q$ and define the corresponding \textbf{slope} $\mu := \mu_\theta: \Z_{\geq 0}^{Q_0} - \{0\} \to \Q$ by
$$
	\mu(d) := -\frac{\theta(d)}{\dim d}
$$
where $\dim d = \sum_i d_i$. For shortness, write $\mu(M) := \mu(d)$ if $M$ is a representation of $Q$ of dimension vector $d$. We will define (semi-)stability by means of this slope function. This has the advantage of being able to define (semi-)stability of a representation of $Q$ regardless of its dimension vector.

\begin{defn*}
	Let $M$ be a representation of $Q$ over $X$. We call $M$ $\theta$-\textbf{semi-stable} if $\mu(M') \leq \mu(M)$ holds for all non-zero sub-representations $M'$ of $M$. Otherwise, it will be called $\theta$-\textbf{unstable}. A $\theta$-semi-stable representation $M$ (of $Q$ over $X$) is called $\theta$-\textbf{stable} if the only sub-representation $M' \neq 0$ that satisfies $\mu(M') = \mu(M)$ is $M$ itself.
\end{defn*}

\begin{rem*}
	If $\theta(d) = 0$ then a representation is semi-stable if and only if $\mu(M') \leq 0$ for every sub-representation $M'$ which, in turn, is satisfied precisely if $\theta(M') \geq 0$ (note the sign in the definition of $\mu$). The reason we make this sign convention is that semi-stability with respect to a slope function usually means ``no sub-object has bigger slope'' while King's original definition of semi-stability of a representation with respect to a stability condition (cf. \cite[Definition 1.1]{King:94}) reads ``every sub-representation $M'$ satisfies $\theta(M') \geq 0$''.
\end{rem*}

Let $d$ be a dimension vector for $Q$. The sets of equivalence classes of all $\theta$-semi-stable/$\theta$-stable representations of $(Q,d)$ over $X$ will be denoted $\Rep_X^{\theta-\sst}(Q,d)$ and $\Rep_X^{\theta-\st}(Q,d)$, respectively. Note that the notions of $\theta$-semi-stability and $\theta$-stability coincide if the dimension vector $d$ is $\theta$-\textbf{coprime}, that means if $\mu(d') \neq \mu(d)$ for all sub-dimension vectors $0 \leq d' \leq d$ with $0 \neq d' \neq d$. In this case, we simply write $\Rep^\theta_X(Q,d)$. In general, the $\theta$-stable representations of $Q$ with slope $\mu_0$ are precisely the simple objects in the full abelian subcatetory of $\cat{Rep}_X(Q)$ of $\theta$-semi-stable representations of $Q$ with a fixed slope $\mu_0$.\\
We call $d$ \textbf{coprime} if the greatest common divisor of all $d_i$ is one. Let us also remark that if $d$ is coprime, $d$ will also be $\theta$-coprime for a generic choice of $\theta$ (that means $\theta$ avoids finitely many hyperplanes in the space of stabilities).

\begin{conv*}
	In what follows, let $(Q,d)$ be a quiver setting with a \emph{connected} quiver $Q$ and a \emph{coprime and $\theta$-coprime} dimension vector $d$.
\end{conv*}

Then, we can in fact construct a representing object of the contravariant functor $\Rep^{\theta}(Q,d)$ that associates to every variety $X$ the set $\Rep_X^{\theta}(Q,d)$. This is an open sub-functor of $\Rep(Q,d)$ mentioned above.\\
We sketch this construction briefly: For every $i \in Q_0$, fix a vector space $V_i$ of dimension $d_i$. For every arrow $\alpha: i \to j$ of $Q$, let $R_\alpha := \Hom(V_i,V_j)$, and we define
$$
	R := R(Q,d) := \bigoplus_{\alpha \in Q_1} R_\alpha.
$$
We consider elements $M = \sum_\alpha M_\alpha \in R$ as representations of $Q$ on the fixed vector spaces $V_i$. As $R$ is a vector space, it possesses the structure of an affine space. Now, define an action of the reductive linear algebraic group $G := G(Q,d) := \prod_{i \in Q_0} \Gl(V_i)$ on $R$ via
$$
	g \cdot M = \sum_{\alpha:i \to j} g_jM_\alpha g_i^{-1},
$$
where $g = (g_i \mid i \in Q_0) \in G$ and $M = \sum_\alpha M_\alpha \in R$. The image $\Gamma$ of the diagonal embedding $\G_m \to G$ acts trivially, whence we obtain an action of the quotient group $PG := PG(Q,d) := G/\Gamma$ (which is also a reductive linear algebraic group). Two representations $M, M' \in R$ are isomorphic if and only if they lie in the same $PG$-orbit. Thus, set-theoretically, the set of orbits is the right thing to consider. The hard part is to equip this with the structure of a variety and a representation of $(Q,d)$ over this variety. This fails in general. However, on the subset $R^{\theta} := R^{\theta}(Q,d)$ of $\theta$-(semi-)stable elements of $R$, Mumford's geometric invariant theory (cf. \cite{GIT:94}) asserts that a geometric quotient exists. In our context, it reads as follows, as it has been pointed out in \cite[Proposition 3.1]{King:94}:

\begin{thm}[Mumford, King]
	There exists a geometric $PG$-quotient $\pi: R^{\theta} \to M^{\theta}(Q,d)$.
\end{thm}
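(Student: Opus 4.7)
The plan is to realise $M^\theta(Q,d)$ as a GIT quotient in the sense of Mumford and then invoke the stability calculation of King to identify the GIT-stable locus with $R^\theta$.

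First, I would encode the stability condition $\theta$ as a character of $G$. Concretely, pick integers so that $\sum_i \theta_i d_i = 0$ (possible after rescaling, using that $d$ is coprime to make the condition well-defined modulo $\Gamma$) and set $\chi_\theta(g) = \prod_{i \in Q_0} \det(g_i)^{-\theta_i}$. Since $\chi_\theta$ is trivial on the diagonal $\Gamma \subset G$, it descends to a character of $PG$, and it linearises the trivial line bundle on the affine variety $R$. Mumford's theory (cf.\ \cite{GIT:94}) then produces an open locus $R^{\mathrm{ss}} \subseteq R$ of GIT-semistable points with a good quotient $R^{\mathrm{ss}}/\!/G$ and an open subset $R^{\mathrm{s}} \subseteq R^{\mathrm{ss}}$ of GIT-stable points on which the quotient is geometric.

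The central step is the Hilbert--Mumford numerical criterion, which is where King's input enters. A one-parameter subgroup $\lambda: \G_m \to G$ is the same datum as a $\Z$-grading $V_i = \bigoplus_{n \in \Z} V_i^{(n)}$ for each $i$. The limit $\lim_{t \to 0} \lambda(t) \cdot M$ exists in $R$ exactly when each $M_\alpha$ preserves the induced descending filtration $V_i^{\geq n} = \bigoplus_{m \geq n} V_i^{(m)}$, in which case this filtration is a filtration of $M$ by sub-representations whose successive quotients are representations of $Q$. A direct computation gives $\langle \chi_\theta, \lambda\rangle = -\sum_{i,n} n\, \theta_i \dim V_i^{(n)}$, which, after summation by parts, becomes $\sum_n \theta(\dim V^{\geq n})$. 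The Hilbert--Mumford criterion (``semistable iff $\langle \chi_\theta,\lambda\rangle \geq 0$ for all such $\lambda$'') then translates, by taking filtrations of length two, into the statement that every non-zero sub-representation $M' \subseteq M$ satisfies $\theta(M') \geq 0$, which with our sign convention is exactly $\mu(M') \leq \mu(M)$. Strict inequality for proper non-zero sub-representations corresponds to stability.

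Finally, I would invoke the standing assumption that $d$ is $\theta$-coprime. This forces $R^{\mathrm{ss}} = R^{\mathrm{s}} = R^\theta$, since any semistable but not stable $M$ would have a sub-representation $M'$ with $\mu(M') = \mu(M)$ and $\dim M' \notin \{0, \dim M\}$. Hence Mumford gives a geometric $G$-quotient $R^\theta \to R^\theta /\!/ G$. Because $\Gamma$ acts trivially on $R$, and because $\theta$-stable representations have no scalar automorphisms beyond $\Gamma$ (their endomorphism algebra is $\kk$), the induced $PG$-action on $R^\theta$ has trivial stabilisers and the quotient is in fact a geometric $PG$-quotient; this is the desired $\pi: R^\theta \to M^\theta(Q,d)$. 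The main obstacle is the numerical criterion translation in paragraph two; everything else is a formal application of GIT combined with the coprimality hypothesis.
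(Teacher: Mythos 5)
Your proposal is correct and follows essentially the same route as the paper, which proves nothing itself but cites \cite[Proposition 3.1]{King:94}: encode $\theta$ as a character of $G$ descending to $PG$, translate the Hilbert--Mumford criterion into the sub-representation condition, and use $\theta$-coprimality to force the semistable and stable loci to coincide so that the GIT quotient is geometric. No gaps.
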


Note that for a $\theta$-stable representation $M$, Schur's Lemma holds, that means the only automorphisms of $M$ are the scalars (here, we need $Q$ to be connected). Thus $PG$ acts freely on $R^\theta$ which is a necessary condition for the geometric quotient to exist. Most of the time, we denote this quotient just $M^\theta$. One can show that $M^\theta$ has some good properties (cf. \cite[Proposition 4.3, Remark 5.4]{King:94}):

\begin{thm}[King]
	Consider the geometric quotient $\pi: R^\theta \to M^{\theta}$.
	\begin{enumerate}
		\item The variety $M^{\theta}$ is non-singular.
		\item If the quiver $Q$ is acyclic, $M^{\theta}$ is also a projective variety.
	\end{enumerate}
\end{thm}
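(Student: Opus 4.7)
The plan is to prove the two assertions separately, both times exploiting the hypothesis that $d$ is $\theta$-coprime so that $\theta$-semistability coincides with $\theta$-stability and the entire discussion stays in the ``good'' GIT regime where a geometric, and not merely categorical, quotient exists.

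For non-singularity, the strategy is to exhibit $\pi\colon R^\theta \to M^\theta$ as a principal $PG$-bundle in a topology (étale) fine enough to preserve smoothness. The total space $R$ is an affine space, hence smooth, and $R^\theta$ is open in $R$, hence smooth as well. It therefore suffices to verify that $PG$ acts freely on $R^\theta$: for a $\theta$-stable $M \in R^\theta$ the $G$-stabilizer of $M$ equals $\mathrm{Aut}_Q(M) = \mathrm{End}_Q(M)^\times$, and Schur's Lemma (valid because $Q$ is connected and $M$ is $\theta$-stable) forces $\mathrm{End}_Q(M) = \kk$. Thus the stabilizer is precisely the diagonal $\Gamma$, so $PG$ acts freely. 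Since $\pi$ is a geometric quotient for a reductive group acting freely and properly, Luna's étale slice theorem (or a direct GIT argument) realizes $\pi$ as an étale-locally trivial principal $PG$-bundle, and smoothness descends from $R^\theta$ to $M^\theta$.

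For projectivity in the acyclic case, the plan is to realize $M^\theta$ as an honest Proj of a finitely generated graded ring and then argue that its degree-zero part collapses to $\kk$. Concretely, GIT provides
$$
M^\theta \;=\; \mathrm{Proj}\bigoplus_{n \geq 0} A(R)^{G,\chi_\theta^n},
$$
where $\chi_\theta$ is the character of $G$ associated to $\theta$ and $A(R)^{G,\chi_\theta^n}$ denotes the space of $\theta$-semi-invariants of weight $n$. This Proj is always projective over $\Spec A(R)^G$, so it remains to verify $A(R)^G = \kk$ whenever $Q$ is acyclic. This is precisely where acyclicity enters: by the Le Bruyn--Procesi theorem the ring $A(R)^G$ is generated by traces of products $M_{\alpha_k}\cdots M_{\alpha_1}$ along oriented cycles $\alpha_1,\dots,\alpha_k$ in $Q$, so in the absence of cycles there are no non-constant generators and the ring reduces to $\kk$.

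The main obstacle is the projectivity statement: the non-singularity argument is essentially formal once the free action is in hand, whereas projectivity rests on the non-trivial invariant-theoretic input of Le Bruyn--Procesi. An alternative avoiding that black box would be to use the Hilbert--Mumford numerical criterion to show directly that, in an acyclic quiver, every $\theta$-semistable orbit in $R$ is already closed -- but acyclicity must then enter combinatorially through the analysis of one-parameter subgroups, so the difficulty is merely relocated rather than removed.
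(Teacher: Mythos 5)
The paper offers no proof of this theorem; it simply cites King (Proposition 4.3, Remark 5.4), and your argument is precisely the standard one given there: smoothness via the free $PG$-action (Schur's Lemma plus the \'etale slice theorem making $\pi$ a principal bundle), and projectivity via $M^\theta = \operatorname{Proj}$ of the semi-invariant ring over $\operatorname{Spec} A(R)^G$, with $A(R)^G = \kk$ for acyclic $Q$ by Le Bruyn--Procesi. Both steps are correct as written (the characteristic-zero hypothesis of the paper covers the uses of Luna and of Le Bruyn--Procesi), so the proposal matches the cited proof.
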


The final ingredient is a representation $U$ of $(Q,d)$ over $M^{\theta}$ with the obvious universal property making $U$ a representing object of the functor ``(semi-)stable representations of $(Q,d)$ up to isomorphism''. For every $i \in Q_0$, let $E_i = V_i \times R^{\theta}$ be the trivial vector bundle on $R^{\theta}$. Define an action of $G$ on $E_i$ as follows: As the dimension vector $d$ is coprime, there exist integers $a_i$ with $\sum_i a_id_i = 1$. Define a character $\psi: G \to \G_m$ of weight one by $\psi(g) = \prod_i(\det g_i)^{a_i}$. This choice assures that with
$$
	g \cdot (v,M) = (\psi(g)^{-1}g_iv, g\cdot M)
$$
for $g \in G$, $v \in V_i$, and $M \in R^{\theta}$, the image $\Gamma \sub G$ of the diagonal embedding $\G_m \to G$ acts trivially on every fiber of $E_i$. Therefore, we obtain a $PG$-action on $E_i$, whence the bundle $E_i$ descends to a (uniquely determined) vector bundle $U_i$ on $M^{\theta}$, i.e. $\pi^*U_i = E_i$. This works for the maps as well: Let $\alpha: i \to j$ be an arrow of $Q$. Then define the map $E_\alpha: E_i \to E_j$ by
$$
	E_\alpha(v,M) = (M_\alpha v,M).
$$
This map is $PG$-equivariant, thus it also descends to a homomorphism $U_\alpha: U_i \to U_j$ of vector bundles on $M^{\theta}$. Denote those two representations over $R^{\theta}$ and $M^{\theta}$ with $E$ and $U$, respectively. This representation $U$ can be seen as a universal object (cf. \cite[Proposition 5.3]{King:94}):

\begin{thm}[King]\label{king1}
	The representation $U$ of $(Q,d)$ over the geometric quotient $M^{\theta} = M^{\theta}(Q,d)$ is a representing object of the contravariant functor $\Rep^{\theta}(Q,d)$.
\end{thm}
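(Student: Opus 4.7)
My plan is to exhibit, for every variety $X$, a natural bijection between $\mathrm{Mor}(X, M^\theta)$ and $\Rep^\theta_X(Q,d)$ with forward map $\Phi_X(f) := [f^*U]$. Naturality of this assignment in $X$ is immediate from the functoriality of pullback of representations, so the real work is constructing an inverse $\Psi_X$ and verifying the two composition identities.

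To define $\Psi_X$, let $M$ be a $\theta$-stable representation of $(Q,d)$ over $X$. Choose a cover $\{X_\lambda\}$ of $X$ (Zariski or, if necessary, \'etale) on which all bundles $M_i$ trivialize simultaneously via isomorphisms $M_i|_{X_\lambda} \cong V_i \times X_\lambda$. Under these trivializations the maps $M_\alpha$ become morphisms $X_\lambda \to R_\alpha$, assembling to $\varphi_\lambda \colon X_\lambda \to R$. Since every fiber is $\theta$-stable, $\varphi_\lambda$ factors through $R^\theta$, and I set $f_\lambda := \pi \circ \varphi_\lambda$. On an overlap $X_\lambda \cap X_\mu$ the two trivializations differ by a transition function $g_{\lambda\mu} \colon X_\lambda \cap X_\mu \to G$, so $\varphi_\lambda = g_{\lambda\mu} \cdot \varphi_\mu$. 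Because $\Gamma \sub G$ acts trivially on $R$ and $\pi$ identifies $PG$-orbits, the maps $f_\lambda$ and $f_\mu$ agree on overlaps, so they glue to a morphism $\Psi_X([M]) \colon X \to M^\theta$ that is readily checked to be independent of the cover and trivializations chosen.

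To verify $\Phi_X \circ \Psi_X = \mathrm{id}$, I use the defining identity $\pi^* U_i = E_i$ of the descent: pulling $U_i$ back along $f_\lambda = \pi \circ \varphi_\lambda$ recovers $\varphi_\lambda^* E_i = V_i \times X_\lambda$, matching the original trivialization of $M_i|_{X_\lambda}$. The transition cocycles of $f_\lambda^*U_i$ and of $M_i$ on overlaps coincide because the character $\psi$ entering the definition of $U$ was chosen of weight one, so $\Gamma$ acts trivially on each $E_i$ and the descent gives honest vector bundles whose gluing is controlled by the image of $g_{\lambda\mu}$ in $PG$. For the opposite composition $\Psi_X \circ \Phi_X = \mathrm{id}$, I use that $PG$ acts freely on $R^\theta$ --- a consequence of Schur's Lemma, given that $Q$ is connected and $d$ is coprime --- so $\pi$ is a principal $PG$-bundle in the \'etale topology. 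Thus any $f \colon X \to M^\theta$ lifts \'etale-locally to $\tilde f_\lambda \colon X_\lambda \to R^\theta$, and carrying out the construction above on $f^*U$ reproduces precisely these lifts.

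The main obstacle is the careful bookkeeping in the descent step: verifying that $\pi$ is \'etale-locally trivial as a principal $PG$-bundle, and, crucially, that the weight-one twist entering the definition of $U_i$ matches the transition cocycle of an arbitrary $\theta$-stable $M$ on the nose --- not merely up to tensoring with a line bundle, i.e.\ only at the level of the associated projective bundle. It is here that coprimeness of $d$, needed to produce a character $\psi$ of weight one in the first place, plays its essential role.
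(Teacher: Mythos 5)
The paper offers no proof of this theorem (it simply cites \cite[Proposition 5.3]{King:94}), so I am comparing your argument against the standard one. Your overall strategy --- set up $\Phi_X(f)=[f^*U]$, build the classifying morphism $\Psi_X$ by local trivialization, gluing, and the quotient property of $\pi$, then check both composites --- is the right outline. But the step you yourself identify as the crux is justified incorrectly, and the claim you make there is false as stated.

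Concretely: if $M_i|_{X_\lambda}\cong V_i\times X_\lambda$ with transition functions $g_{\lambda\mu}$ valued in $G$, then for $f=\Psi_X([M])$ the pullback $f^*U_i$ has transition functions $\psi(g_{\lambda\mu})^{-1}\,(g_{\lambda\mu})_i$, not $(g_{\lambda\mu})_i$: the bundle $E_i$ carries the $\psi^{-1}$-twisted $G$-action, and it is this twisted action that descends through $PG$. Hence $f^*U\cong M\otimes L^{-1}$ with $L=\bigotimes_i(\det M_i)^{\otimes a_i}$, and the cocycles coincide ``on the nose'' only when $L$ is trivial. Coprimeness of $d$ buys you the \emph{existence} of a weight-one character $\psi$, i.e.\ the possibility of descending $E_i$ at all; it does not normalize the cocycle of an arbitrary $\theta$-stable $M$ over $X$. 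The theorem nevertheless holds because the functor $\Rep^\theta(Q,d)$ is defined with \emph{equivalence} classes (fibrewise isomorphism), not isomorphism classes, and $M\otimes L^{-1}$ is equivalent to $M$ since every fibre of $L$ is a copy of $\kk$; so the correct conclusion of this step is $\Phi_X\Psi_X([M])=[M\otimes L^{-1}]=[M]$. The paper itself stresses right after the theorem that $U$ is universal only up to equivalence and that different choices of $\psi$ yield non-isomorphic universal representations --- which directly contradicts your ``on the nose'' claim. A minor additional remark: for the composite $\Psi_X\Phi_X=\mathrm{id}$ you do not actually need \'etale-local triviality of $\pi$; since both sides are morphisms of varieties over an algebraically closed field, it suffices to compare them on closed points, where $(f^*U)_x\cong U_{f(x)}$ is precisely the stable representation classified by $f(x)$.
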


The representation $U$ is called a \textbf{universal representation} of $(Q,d)$. Note that a universal representation is not unique up to isomorphism, but just up to equivalence, which is a much weaker condition as it is defined pointwise on $M^\theta$. In fact, different choices of a character $\psi$ give rise to non-isomorphic universal representations (note that twisting a universal representation with a line bundle gives again a universal representation which is not isomorphic to the original one unless the line bundle is trivial).
	\section{Tautological relations in the Chow ring of $M^{\theta}(Q,d)$}
Our goal is to give an explicit description of the Chow ring of a fine quiver moduli. Before doing so, let us briefly recollect some facts on Chow rings.

\subsection{A short reminder on intersection theory}

We assume the reader is familiar with the basic notions of intersection theory. As a main reference, we recommend Fulton's book \cite{Fulton:98}.

If $X$ is a variety, or more generally an algebraic scheme over $\kk$, we denote by $A_*(X) = \bigoplus_{k \geq 0} A_k(X)$ the Chow group of $X$ (cf. \cite[1.3]{Fulton:98}). Remember that there exists a proper push-forward and a flat pull-back (cf. \cite[1.4 and 1.7]{Fulton:98}) as well as Gysin pull-backs for lci morphisms (cf. \cite[Chapter 6]{Fulton:98}).

In case $X$ is non-singular of dimension $n$, we write $A^i(X) = A_{n-i}(X)$. There exists an intersection product making $A^*(X) = \bigoplus_{i \geq 0} A^i(X)$ a commutative graded ring (cf. \cite[Chapter 8]{Fulton:98}). For a morphism $f: Y \to X$ of non-singular varieties (which is necessarily lci), the Gysin pull-back gives a homomorphism $f^*: A^*(X) \to A^*(Y)$ of graded rings (cf. \cite[8.3]{Fulton:98}). 

Let $E$ be a vector bundle of rank $r$ on a non-singular variety $X$. Denote by $c_i(E) \in A^i(X)$ the $i$-th Chern class of $E$ (see \cite[3.2]{Fulton:98} or \cite{Groth:58:Chern} for an axiomatic definition of Chern classes). Let $c_t(E)$ be the Chern polynomial in the indeterminate $t$.

In \cite{Groth:58:inter}, Grothendieck has shown that the Chow ring of a flag bundle is closely related to the Chow ring of its basis: Let $E$ be a vector bundle of rank $r$ on $X$. We still assume $X$ to be non-singular. Consider the complete flag bundle $\Fl(E) \to X$ of $E$. Let $U^\bull$ be the universal complete flag of the pull-back $E_{\Fl(E)}$ and let $\xi_i := c_1(U^i/U^{i-1})$. Then, \cite[Theorem 1]{Groth:58:inter} states:

\begin{thm}[Grothendieck] \label{groth_orig}
	As a graded $A^*(X)$-algebra, $A^*(\Fl(E))$ is generated by $\xi_1,\ldots,\xi_r$, subject only to those relations contained in the expression
	$$
		c_t(E) = \prod_{i=1}^r (1 + \xi_i t).
	$$
\end{thm}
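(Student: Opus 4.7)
The plan is to induct on the rank $r$ of $E$. For $r = 1$ the flag bundle is $X$ itself, $U^1 = E$, and the relation $c_t(E) = 1 + \xi_1 t$ is just the definition $\xi_1 = c_1(E)$, so there is nothing to prove.

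For the inductive step I would factor $\Fl(E) \to X$ through the projective bundle of lines $\pi : \P(E) \to X$. Let $L \hookrightarrow \pi^* E$ denote the tautological line subbundle and $Q := \pi^* E / L$ the rank-$(r-1)$ universal quotient. Specifying a complete flag in $E$ is the same as specifying a line $L \subset E$ together with a complete flag in $E/L$, so $\Fl(E) = \Fl(Q)$ as a scheme over $\P(E)$, and we obtain a tower
\[
    \Fl(E) \;=\; \Fl(Q) \;\xrightarrow{\pi'}\; \P(E) \;\xrightarrow{\pi}\; X .
\]
Under this identification, $U^1$ pulls back from $L$, so $\xi_1 = (\pi')^* c_1(L)$, while $\xi_2, \ldots, \xi_r$ are the Chern roots of the universal flag of $Q$ on $\Fl(Q)$.

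The projective bundle theorem presents $A^*(\P(E))$ as an $A^*(X)$-algebra generated by $\xi_1$ modulo a single degree-$r$ relation, and as a free $A^*(X)$-module on $1, \xi_1, \ldots, \xi_1^{r-1}$. The inductive hypothesis, applied to $Q$ over $\P(E)$, presents $A^*(\Fl(Q))$ as an $A^*(\P(E))$-algebra generated by $\xi_2, \ldots, \xi_r$ modulo the coefficient-wise relation $c_t(Q) = \prod_{i=2}^r(1 + \xi_i t)$. From the short exact sequence $0 \to L \to \pi^* E \to Q \to 0$ the Whitney formula yields $c_t(E) = (1 + \xi_1 t)\, c_t(Q)$, so substitution gives the single clean identity $c_t(E) = \prod_{i=1}^r(1 + \xi_i t)$; its $r$ coefficient-wise equations absorb both the $r-1$ relations on $c_t(Q)$ and the degree-$r$ relation on $\P(E)$.

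The main obstacle is justifying the word \emph{only}: that the surjection
\[
    R \;:=\; A^*(X)[\xi_1,\ldots,\xi_r] \big/ \bigl(\text{coefficients of } c_t(E) - \textstyle\prod_{i=1}^r(1+\xi_i t)\bigr) \twoheadrightarrow A^*(\Fl(E))
\]
is injective. Iterating the projective bundle theorem along the tower endows $A^*(\Fl(E))$ with the structure of a free $A^*(X)$-module of rank $r!$ on the monomial basis $\{ \xi_1^{a_1} \cdots \xi_r^{a_r} : 0 \le a_i \le r - i \}$. On the other hand, inside $R$ the $i$-th defining relation, after using relations $1, \ldots, i-1$ to eliminate higher pure powers of $\xi_1, \ldots, \xi_{i-1}$, expresses $\xi_i^{r-i+1}$ as a polynomial in the preceding monomials of strictly smaller lex order; consequently the same $r!$ monomials span $R$ over $A^*(X)$. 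A surjection of finitely generated free $A^*(X)$-modules of equal rank that sends a spanning set bijectively to a basis is an isomorphism, and this closes the induction.
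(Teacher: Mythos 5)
The paper offers no proof of this statement; it is quoted as Theorem 1 of the cited Grothendieck reference. Your argument is correct and is essentially the classical (indeed Grothendieck's own) proof: induct on the rank by factoring $\Fl(E)\to X$ through the tower of projective bundles, use Whitney's formula to merge the projective-bundle relation with the inductive relations on $c_t(Q)$, and conclude injectivity by matching the $r!$ monomials $\xi_1^{a_1}\cdots\xi_r^{a_r}$ with $0\le a_i\le r-i$, which span the candidate ring and map to a free $A^*(X)$-basis of $A^*(\Fl(E))$.
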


Let $E$ be a vector bundle of rank $r$ on a non-singular variety $X$ of dimension $n$. Let $s: X \to E$ be a section of $E$ and consider its set $Z(s)$ of zeros. There exists a class $\Z(s) \in A_{n-r}(Z(s))$, called the \textbf{localized top Chern class} of $s$, which has the following properties (cf. \cite[14.1]{Fulton:98}):
\begin{enumerate}
	\item $i_*\Z(s) = c_r(E)$, where $i: Z(s) \to X$ is the closed embedding,
	\item if $s$ is a regular section, then $\Z(s) = [Z(s)]$, the associated cycle to $Z(s)$ regarded as the (scheme-theoretic) zero fiber of $s$, and
	\item the formation of localized top Chern classes commutes with Gysin maps and proper push-forwards.
\end{enumerate}
The case that we are interested in is the following: Let $f: E \to F$ be a map of vector bundles on $X$ of ranks $r$ and $s$, respectively. This map can be regarded as a global section of the Hom-bundle $\usc{\Hom}(E,F) = E^\vee \otimes F$. Then $Z(f)$ is the set of all $x \in X$ such that the map $f_x: E_x \to F_x$ is the zero map and
$$
	i_*\Z(f) = c_{rs}(E^\vee \otimes F) = \prod_{i=1}^r \prod_{j=1}^s (\eta_j - \xi_i),
$$
where $\xi_i$ and $\eta_j$ are the Chern roots of $E$ and $F$, respectively. Although a little misleading, because the dependency on $f$ vanishes, we will write $\Z(f)$ as an abbreviation for $i_*\Z(f)$, in the following.

\subsection{The Chow ring of $M^\theta(Q,d)$ is tautologically generated}

Let $Q$ be an acyclic quiver and let $d$ be a coprime and $\theta$-coprime dimension vector for $Q$. Fix vector spaces $V_i$ of dimension $d_i$ and let $R = R(Q,d)$, $R^\theta = R^{\theta}(Q,d)$, and $M^\theta = M^{\theta}(Q,d)$ as described in the previous subsection. Our goal is to give a description of the Chow ring $A(M^\theta) := A^*(M^\theta)_\Q$ in terms of generators and relations. Let $U$ be the universal representation of $(Q,d)$ deduced from the character $\psi$ of weight one. We already know the following (cf. \cite[Theorem 3]{KW:95}):

\begin{thm}[King-Walter]\label{king2}
	The Chow ring $A(M^\theta)$ is generated by the Chern classes $c_{i,\nu} := c_\nu(U_i)$, with $i \in Q_0$ and $1 \leq \nu \leq d_i$, as a $\Q$-algebra.
\end{thm}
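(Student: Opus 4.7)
The strategy is to lift everything to equivariant intersection theory on $R$ and then descend through the GIT quotient $M^\theta = R^\theta/PG$. Since the universal bundles $U_i$ on $M^\theta$ are, up to a twist by the character $\psi$, the $G$-equivariant bundles $E_i$ on $R^\theta$, their Chern classes come from $G$-equivariant Chern classes; it therefore suffices to show that the latter already generate the whole equivariant Chow ring of $R^\theta$.

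First, $R$ is an affine space, so equivariant homotopy invariance yields
$$
A_G^*(R) \;=\; A_G^*(\mathrm{pt}) \;=\; A^*(BG) \;=\; \Q\bigl[c_{i,\nu}^{\mathrm{eq}} : i \in Q_0,\ 1 \le \nu \le d_i\bigr],
$$
a polynomial ring in the equivariant Chern classes of the trivial bundles $E_i = V_i \times R$. Second, the open embedding $R^\theta \hookrightarrow R$ induces, via the localisation sequence for equivariant Chow groups, a surjection $A_G^*(R) \twoheadrightarrow A_G^*(R^\theta)$. Third, $\theta$-coprimality of $d$ together with connectedness of $Q$ forces (via Schur's lemma) the $PG$-action on $R^\theta$ to be free with geometric quotient $M^\theta$, so the Edidin--Graham theorem on free quotients gives $A_{PG}^*(R^\theta) = A(M^\theta)$; moreover, the character $\psi$ trivialises the central $B\Gamma$-gerbe $[R^\theta/G] \to M^\theta$, yielding an isomorphism
$$
A_G^*(R^\theta) \;\cong\; A(M^\theta) \otimes_\Q \Q[t], \qquad t := c_1^{\mathrm{eq}}(L_\psi).
$$

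To finish, the identity $E_i \cong \pi^*U_i \otimes L_\psi$ of $G$-equivariant bundles on $R^\theta$ together with the splitting principle expresses each $c_{i,\nu}^{\mathrm{eq}}$ as a polynomial in the $c_\nu(U_i)$ and $t$. The composite surjection $\Q[c_{i,\nu}^{\mathrm{eq}}] \twoheadrightarrow A(M^\theta)[t]$ therefore shows that the $c_\nu(U_i)$ together with $t$ generate $A(M^\theta)[t]$ as a $\Q$-algebra; quotienting by the ideal $(t)$ gives the desired generation statement for $A(M^\theta)$.

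The delicate step is the third one: the clean identification $A_G^*(R^\theta) \cong A(M^\theta)[t]$ requires freeness of the $PG$-action (hence $\theta$-coprimality and connectedness of $Q$), the Edidin--Graham free-quotient theorem, and the integers $a_i$ with $\sum_i a_i d_i = 1$ furnished by coprimality, which are precisely what builds $\psi$ and thereby trivialises the $B\G_m$-gerbe. Acyclicity of $Q$ plays no role in this particular argument; it enters elsewhere, to ensure projectivity of $M^\theta$.
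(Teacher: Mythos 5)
Your proposal is correct, but note that the paper does not prove this statement at all: it is quoted verbatim from \cite{KW:95}, so the comparison has to be with King--Walter's own argument. Their proof is genuinely different: acyclicity of $Q$ gives the standard two-term projective resolution of representations, which globalizes to a complex of bundles built from the $U_i^\vee \boxtimes U_j$ on $M^\theta \times M^\theta$ whose localized top Chern class computes the class of the diagonal; decomposing $[\Delta]$ as a sum of products of pullbacks of Chern classes then yields tautological generation \emph{and} the fact that the cycle map is an isomorphism (which this paper also uses). Your route --- homotopy invariance for $A^*_G$ of the affine space $R$, surjectivity of restriction to the open subset $R^\theta$ via the equivariant localization sequence, Edidin--Graham for the free $PG$-action, and the splitting $g \mapsto (\bar g, \psi(g))$ of $G$ as $PG \times \G_m$ furnished by the weight-one character $\psi$ (this is where coprimality of $d$ and connectedness of $Q$ enter) --- is sound, and the final step is fine: modulo $t$ the equivariant Chern class $c^{\mathrm{eq}}_{i,\nu}$ maps exactly to $c_\nu(U_i)$, so the composite surjection lands in the subring they generate. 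What each approach buys: yours is more general (acyclicity really is irrelevant, and the argument works integrally), but it rests on equivariant intersection theory (Totaro, Edidin--Graham) that postdates King--Walter and it does not recover the comparison with cohomology that the diagonal method gives for free. One cosmetic caveat: in the paper's conventions $E_i$ already carries the $\psi^{-1}$-twist, so that $\pi^*U_i = E_i$ on the nose; your identity $E_i \cong \pi^*U_i \otimes L_\psi$ is correct for the \emph{untwisted} trivial bundle and should be phrased that way to avoid a clash of notation.
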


So, generators for the ring in question are known. We want to gather some information about relations between these generators.

\subsection{Relations between the tautological generators}

There is one rather non-canonical relation which comes from the choice of the character $\psi$. Considering the construction of $U$, it is easy to see that
$$
	\bigotimes_{i \in Q_0} (\det U_i)^{\otimes a_i} = \OO,
$$
where $a_i$ are the integers with $\psi(g) = \prod_i (\det g_i)^{a_i}$. This, in turn, yields the so-called \textbf{linear relation}
$$
	\sum_{i \in Q_0} a_i c_{i,1} = 0.
$$

Next, we will construct certain degeneracy loci in an iterated flag bundle over $M^\theta$ that provide relations between the $c_\nu(U_i)$.

Let us have a closer look at the notion of (semi-)stability. Let $M \in R$ and let $d' \neq 0$ be a dimension vector of $Q$ with $d' \leq d$ and $\mu(d') > \mu(d)$. For convenience, such a sub-dimension vector $d' \leq d$ will be called \textbf{forbidden} for $\theta$ or $\theta$-\textbf{forbidden}. We assume $M$ had a sub-representation $M'$ of dimension vector $d'$. That means there exists a tuple $(V'_i \mid i \in Q_0)$ of subspaces $V_i'$ of $V_i$ of dimension $d'_i$ such that $M_\alpha V_i' \sub V_j'$ holds for every $\alpha: i \to j$. Conversely, $M$ has no sub-representation of dimension vector $d'$ if and only if for all such tuples $(V'_i \mid i \in Q_0)$, there exists an arrow $\alpha: i \to j$ in $Q$ such that
$$
	M_\alpha V'_i \nsubseteq V'_j.
$$

Next, we form for every $i \in Q_0$ the complete flag bundle $\Fl(U_i)$ and denote by $\Fl(U)$ the fiber product of all $\Fl(U_i)$ over $M^\theta$. Let $p: \Fl(U) \to M^\theta$ be the projection. This variety possesses a ``universal flag'' $U^\bull = (U_i^\bull \mid i \in Q_0)$ consisting of complete flags $U_i^\bull$ of $p^*U_i$. The flag $U_i^\bull$ arises as the pull-back of the universal flag on $\Fl(U_i)$ along the natural morphism $\Fl(U) \to \Fl(U_i)$. Together with $U^\bull$, the variety $\Fl(U)$ is the universal $M^\theta$-variety being equipped with a family of complete flags of (the pull-backs of) all $U_i$'s. Therefore, a point $x \in \Fl(U)$ can be regarded as a pair $x = ([M],W^\bull)$ consisting of an isomorphism class of a $\theta$-(semi-)stable representation $M$ of $(Q,d)$ and a tuple $W^\bull = (W_i^\bull \mid i \in Q_0)$ of complete flags $W_i^\bull$ of subspaces of $(U_i)_{[M]} = M_i = V_i$.

We will see that every forbidden sub-dimension vector $d' \leq d$ induces a relation in $A(\Fl(U))$. Fix a forbidden $d' \leq d$. Let $x = ([M], W^\bull)$ be a point of $\Fl(U)$. In particular, we are given a $d'_i$-dimensional subspace of $V_i$ for every $i \in Q_0$. As $M$ is a $\theta$-(semi-)stable representation, we know that there exists an arrow $\alpha:i \to j$ with 
$$
	M_\alpha W_i^{d'_i} \nsubseteq W_j^{d'_j}
$$
or equivalently, the composition of linear maps
$
	W_i^{d'_i} \to V_i \xto{}{M_\alpha} V_j \to V_j/W_j^{d'_j}
$
is not identically zero. We consider the map
$$
	\phi_\alpha^{d'}: U_i^{d'_i} \to p^*U_i \xto{}{p^*U_\alpha} p^*U_j \to p^*U_j/U_j^{d'_j}
$$
of vector bundles on $\Fl(U)$. On the fibers of the point $x \in \Fl(U)$, we obtain
\begin{center}
	\begin{tikzpicture}[description/.style={fill=white,inner sep=2pt}]
		\matrix(m)[matrix of math nodes, row sep=1.5em, column sep=2em, text height=1.5ex, text depth=0.25ex]
		{
			(U_i^{d'_i})_x		& (p^*U_i)_x		& & (p^*U_j)_x		& (p^*U_j/U_j^{d'_j})_x \\
			W_i^{d'_i}		& V_i			& & V_j			& V_j/W_j^{d'_j}, \\
		};
		\path[->, font=\scriptsize]
		(m-1-1) edge (m-1-2)
		(m-1-2) edge node[auto] {$(p^*U_\alpha)_x$} (m-1-4)
		(m-1-4) edge (m-1-5)
		(m-2-1) edge (m-2-2)
		(m-2-2) edge node[auto] {$M_\alpha$} (m-2-4)
		(m-2-4) edge (m-2-5)
		;
		\draw[-] ($(m-1-1.south) + (-.1em,0)$) -- ($(m-2-1.north) + (-.1em,.2em)$);
		\draw[-] ($(m-1-1.south) + (.1em,0)$) -- ($(m-2-1.north) + (.1em,.2em)$);
		%
		\draw[-] ($(m-1-2.south) + (-.1em,0)$) -- ($(m-2-2.north) + (-.1em,0)$);
		\draw[-] ($(m-1-2.south) + (.1em,0)$) -- ($(m-2-2.north) + (.1em,0)$);
		%
		\draw[-] ($(m-1-4.south) + (-.1em,0)$) -- ($(m-2-4.north) + (-.1em,0)$);
		\draw[-] ($(m-1-4.south) + (.1em,0)$) -- ($(m-2-4.north) + (.1em,0)$);
		%
		\draw[-] ($(m-1-5.south) + (-.1em,0)$) -- ($(m-2-5.north) + (-.1em,.2em)$);
		\draw[-] ($(m-1-5.south) + (.1em,0)$) -- ($(m-2-5.north) + (.1em,.2em)$);
	\end{tikzpicture}
\end{center}
so $(\phi_\alpha^{d'})_x$ is not the zero-map. This implies that for every point $x$ of $\Fl(U)$, there exists $\alpha \in Q_1$ such that $x$ is not contained in $Z(\phi_\alpha^{d'})$. Hence,
$
	\bigcap_{\alpha \in Q_1} Z(\phi_\alpha^{d'}) = \emptyset
$
and therefore, we obtain the following result.

\begin{prop}
	If $d' \leq d$ is $\theta$-forbidden then
	$
		\prod_{\alpha \in Q_1} \Z(\phi_\alpha^{d'}) = 0
	$
	in $A(\Fl(U))$.
\end{prop}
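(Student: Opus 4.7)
The plan is to package the individual degeneracy maps $\phi_\alpha^{d'}$ into a single section of a direct-sum bundle on $\Fl(U)$ and then invoke the localized top Chern class machinery only once. Concretely, consider the vector bundle
$$
F := \bigoplus_{\alpha: i \to j} \usc{\Hom}\bigl(U_i^{d'_i},\, p^*U_j/U_j^{d'_j}\bigr)
$$
on $\Fl(U)$, indexed over the (finite) set $Q_1$, and let $s$ be the section whose $\alpha$-component is $\phi_\alpha^{d'}$. Since the zero scheme of a section of a direct sum of vector bundles is the fibre product of the zero schemes of its components, one has $Z(s) = \bigcap_{\alpha \in Q_1} Z(\phi_\alpha^{d'})$ as schemes. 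By the pointwise argument carried out just above the statement, this scheme has no $\kk$-points; as we work over an algebraically closed ground field, this forces $Z(s) = \emptyset$.

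From here everything follows from standard intersection theory. Property~(1) of the localized top Chern class gives $c_{\mathrm{top}}(F) = i_* \Z(s)$, where $i: Z(s) \hookrightarrow \Fl(U)$ is the closed embedding. Because $A_*(\emptyset) = 0$, we have $\Z(s) = 0$, and therefore $c_{\mathrm{top}}(F) = 0$ in $A(\Fl(U))$. On the other hand, the Whitney sum formula combined with the identification $\Z(\phi_\alpha^{d'}) = c_{\mathrm{top}}(\usc{\Hom}(U_i^{d'_i}, p^*U_j/U_j^{d'_j}))$ recorded in the previous subsection yields
$$
c_{\mathrm{top}}(F) \;=\; \prod_{\alpha \in Q_1} c_{\mathrm{top}}\bigl(\usc{\Hom}(U_i^{d'_i},\, p^*U_j/U_j^{d'_j})\bigr) \;=\; \prod_{\alpha \in Q_1} \Z(\phi_\alpha^{d'}).
$$
Comparing the two expressions for $c_{\mathrm{top}}(F)$ gives the desired vanishing.

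The only point requiring care is the scheme-theoretic identification $Z(s) = \bigcap_\alpha Z(\phi_\alpha^{d'})$, but this is formal from the construction of zero loci of bundle sections, and the conclusion $Z(s) = \emptyset$ then follows from the Nullstellensatz. Apart from this bookkeeping step I do not expect any real obstacle: the proposition is essentially the statement that ``simultaneously empty degeneracy loci yield a vanishing product of top Chern classes'', and the direct-sum trick reduces it to the cleanest possible instance of that principle.
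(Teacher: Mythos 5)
Your proof is correct, and it rests on exactly the same geometric input as the paper's: the stability of every representation in the family forces $\bigcap_{\alpha \in Q_1} Z(\phi_\alpha^{d'}) = \emptyset$. Where you differ is in how the vanishing of the product of classes is extracted from this emptiness. The paper is terse here: it simply asserts that an empty common degeneracy locus kills the product, implicitly invoking the fact that the localized classes $\Z(\phi_\alpha^{d'})$ live in $A_*(Z(\phi_\alpha^{d'}))$ and that their product refines to a class on the (empty) intersection of their supports. You instead bundle the maps $\phi_\alpha^{d'}$ into one section $s$ of the direct sum $F = \bigoplus_\alpha \usc{\Hom}(U_i^{d'_i}, p^*U_j/U_j^{d'_j})$, observe $Z(s)=\emptyset$, and conclude $c_{\mathrm{top}}(F)=0$; the Whitney formula then identifies $c_{\mathrm{top}}(F)$ with $\prod_\alpha \Z(\phi_\alpha^{d'})$, using the paper's own identification of each factor with a top Chern class. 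This is a legitimate and arguably cleaner route: it needs only the elementary property $i_*\Z(s)=c_{\mathrm{top}}(F)$ together with multiplicativity of total Chern classes, and avoids any appeal to refined intersection products of localized classes. The one step you flag yourself --- the scheme-theoretic identity $Z(s)=\bigcap_\alpha Z(\phi_\alpha^{d'})$ and the passage from ``no $\kk$-points'' to ``empty'' via the Nullstellensatz --- is indeed formal and correctly handled. No gaps.
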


Let us calculate these products more explicitly. For $\alpha: i \to j$, the localized top Chern class $\Z(\phi_\alpha^{d'})$, or more precisely its push-forward to $A(\Fl(U))$, coincides with the top Chern class of the bundle
$$
	(U_i^{d'_i})^\vee \otimes p^*U_j/U_j^{d'_j}.
$$
Let $F_i^\nu := U_i^\nu/U_i^{\nu-1}$ be the successive subquotients of the universal flag $U^\bull$. With $\xi_{i,\nu} := c_1(F_i^\nu)$, we get
\begin{eqnarray*}
	c_t(U_i^{d'_i}) &=& (1 + \xi_{i,1}t)\ldots (1 + \xi_{i,d'_i}t) \\
	c_t(p^*U_j/U_j^{d'_j}) &=& (1 + \xi_{j,d'_j+1}t)\ldots (1 + \xi_{j,d_j}t)
\end{eqnarray*}
and consequently,
$$
	c_t\left((U_i^{d'_i})^\vee \otimes p^*U_j/U_j^{d'_j}\right) = \prod_{\mu = 1}^{d'_i} \prod_{\nu = d'_j+1}^{d_j} \Big( 1 + (\xi_{j,\nu} - \xi_{i,\mu})t \Big).
$$
We obtain
$$
	\Z(\phi_\alpha^{d'}) = \prod_{\mu = 1}^{d'_i} \prod_{\nu = d'_j+1}^{d_j} (\xi_{j,\nu} - \xi_{i,\mu}).
$$
The previous proposition then reads as follows:

\begin{cor}\label{ko1}
	For every $\theta$-forbidden sub-dimension vector $d' \leq d$, we have
	$$
		\prod_{\alpha: i \to j} \prod_{\mu = 1}^{d'_i} \prod_{\nu = d'_j+1}^{d_j} (\xi_{j,\nu} - \xi_{i,\mu}) = 0
	$$
	in $A(\Fl(U))$.
\end{cor}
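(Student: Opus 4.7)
The plan is to deduce the corollary directly from the proposition that precedes it by making the classes $\Z(\phi_\alpha^{d'})$ explicit in terms of the Chern roots $\xi_{i,\nu}$. Since the proposition gives $\prod_{\alpha \in Q_1} \Z(\phi_\alpha^{d'}) = 0$ in $A(\Fl(U))$ for every $\theta$-forbidden $d'$, it suffices to identify each factor.

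First I would use the general principle, recalled just before the proposition, that for a homomorphism $f: E \to F$ of vector bundles on a non-singular variety, the pushforward of the localized top Chern class agrees with $c_{\text{top}}(E^\vee \otimes F)$. Applying this to $\phi_\alpha^{d'}: U_i^{d'_i} \to p^*U_j/U_j^{d'_j}$ for $\alpha: i \to j$ gives
$$
  \Z(\phi_\alpha^{d'}) = c_{\text{top}}\!\left( (U_i^{d'_i})^\vee \otimes p^*U_j/U_j^{d'_j} \right).
$$

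Next I would invoke the splitting afforded by the universal flag on $\Fl(U)$. By construction of $F_i^\nu = U_i^\nu/U_i^{\nu-1}$ and Grothendieck's theorem (Theorem \ref{groth_orig}) applied successively, the bundle $U_i^{d'_i}$ has Chern roots $\xi_{i,1}, \ldots, \xi_{i,d'_i}$, and the quotient $p^*U_j/U_j^{d'_j}$ has Chern roots $\xi_{j,d'_j+1}, \ldots, \xi_{j,d_j}$. Then the standard formula for the total Chern class of $E^\vee \otimes F$ in terms of Chern roots yields
$$
  \Z(\phi_\alpha^{d'}) = \prod_{\mu = 1}^{d'_i} \prod_{\nu = d'_j + 1}^{d_j} (\xi_{j,\nu} - \xi_{i,\mu}),
$$
taking the top degree of the product $\prod_{\mu,\nu}(1 + (\xi_{j,\nu} - \xi_{i,\mu})t)$. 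Substituting this into the proposition and taking the product over all arrows $\alpha \in Q_1$ gives the claimed relation.

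There is no real obstacle here: this corollary is a bookkeeping consequence of the proposition, the Chern-class formalism for $E^\vee \otimes F$, and the splitting principle realized geometrically by the flag bundle. The only delicate point worth double-checking is the sign convention, i.e.\ that it is $\xi_{j,\nu} - \xi_{i,\mu}$ (and not the opposite) that appears; this is fixed by writing $E^\vee$ first in the tensor product, so that the Chern roots of the first factor get negated.
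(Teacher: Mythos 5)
Your argument is correct and follows essentially the same route as the paper: the paper likewise deduces the corollary from the preceding proposition by identifying $\Z(\phi_\alpha^{d'})$ with $c_{\mathrm{top}}\bigl((U_i^{d'_i})^\vee \otimes p^*U_j/U_j^{d'_j}\bigr)$ and expanding via the Chern roots $\xi_{i,\mu}$, $\xi_{j,\nu}$ of the universal flag subquotients. Your remark on the sign convention (negating the roots of the dualized first factor) matches the paper's computation exactly.
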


We can view $\Fl(U)$ as an iterated formation of flag bundles. Therefore, it is easy to determine the Chow ring of $\Fl(U)$ in terms of the Chow ring of $M^\theta$ and the Chern classes $c_1(F_i^\nu)$.\\
Let $\tilde{Q}_0$ be the set of all pairs $(i,\nu)$ with $i \in Q_0$ and $1 \leq \nu \leq d_i$ and let $C$ be the polynomial ring $C := \Q[t_{i,\nu} \mid (i,\nu) \in \tilde{Q}_0]$. We define an action of the group $W := \prod_{i \in Q_0} S_{d_i}$ on $C$ by
$$
	w \cdot t_{i,\nu} = t_{i,w_i(\nu)},
$$
where $w = (w_i \mid i \in Q_0)$. Then, $A := C^W$ is generated by the algebraically independent elements $x_{i,\nu} := \sigma_\nu(t_{i,1},\ldots,t_{i,d_i})$, and $\sigma_\nu$ denotes the $\nu$-th elementary symmetric function (in the suitable number of variables).\\
Define the ring homomorphism $\Psi: C \to A(\Fl(U))$ by $\Psi(t_{i,\nu}) := \xi_{i,\nu}$. As $\sigma_\nu(\xi_{i,1},\ldots,\xi_{i,d_i}) = c_\nu(U_i)$, the map $\Psi$ restricts to $\Phi: A \to A(M^\theta)$, sending $x_{i,\nu}$ to $c_\nu(U_i)$. We get a commuting square
\begin{center}
	\begin{tikzpicture}[description/.style={fill=white,inner sep=2pt}]
		\matrix(m)[matrix of math nodes, row sep=2em, column sep=4em, text height=1.5ex, text depth=0.25ex]
		{
			C & A(\Fl(U)) \\
			A & A(M^\theta). \\
		};
		\path[->, font=\scriptsize]
		(m-1-1) edge node[auto] {$\Psi$} (m-1-2)
		(m-2-1) edge (m-1-1)
		(m-2-2) edge node[auto] {$p^*$} (m-1-2)
		(m-2-1) edge node[auto] {$\Phi$} (m-2-2);
	\end{tikzpicture}
\end{center}
Theorem \ref{groth_orig} implies at once the following fact:

\begin{thm}\label{groth}
	The homomorphism $\Psi$ induces an isomorphism $C \otimes_A A(M^\theta) \xto{}{\cong} A(\Fl(U))$ of (graded) $A(M^\theta)$-algebras.
\end{thm}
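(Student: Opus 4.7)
The plan is to reduce the statement directly to an iterated application of Theorem \ref{groth_orig}. The variety $\Fl(U) \to M^\theta$ is, by construction, the fibre product over $M^\theta$ of the complete flag bundles $\Fl(U_i)$ for $i \in Q_0$. Equivalently, we may order $Q_0 = \{i_1,\dots,i_n\}$ and build $\Fl(U)$ as the tower
\[
\Fl(U) = \Fl(p_{n-1}^* U_{i_n}) \to \cdots \to \Fl(p_1^*U_{i_2}) \to \Fl(U_{i_1}) \to M^\theta,
\]
where each step is a complete flag bundle of the pull-back of the corresponding $U_i$. Since each step is a flag bundle on a non-singular base, Theorem \ref{groth_orig} applies at each stage.

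Applying Grothendieck's theorem to $\Fl(U_{i_1}) \to M^\theta$ yields that $A^*(\Fl(U_{i_1}))$ is, as an $A(M^\theta)$-algebra, generated by $\xi_{i_1,1},\dots,\xi_{i_1,d_{i_1}}$ subject only to the relations
$\sigma_\nu(\xi_{i_1,1},\dots,\xi_{i_1,d_{i_1}}) = c_\nu(U_{i_1})$ for $1 \leq \nu \leq d_{i_1}$. Iterating this over the remaining vertices (using that the pull-back of $U_{i_k}$ has the same Chern classes, pulled back appropriately) shows that $A(\Fl(U))$ is generated as an $A(M^\theta)$-algebra by all the $\xi_{i,\nu}$, subject only to the relations
\[
\sigma_\nu(\xi_{i,1},\dots,\xi_{i,d_i}) \;=\; c_\nu(U_i) \qquad (i \in Q_0,\; 1 \leq \nu \leq d_i).
\]

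Now I would interpret this presentation in terms of $C$ and $A$. The homomorphism $\Psi:C \to A(\Fl(U))$, $t_{i,\nu} \mapsto \xi_{i,\nu}$, is surjective by the previous paragraph, and the relations above are precisely the relations $x_{i,\nu} = \Phi(x_{i,\nu})$ of the image of the map $A \to A(M^\theta)$. Hence $\Psi$ factors through the surjection $C \otimes_A A(M^\theta) \twoheadrightarrow A(\Fl(U))$, and the iterated Grothendieck theorem says this surjection is in fact an isomorphism: in each stage, $A^*$ of the flag bundle is a free module over $A^*$ of the base with an explicit basis of monomials in the $\xi$'s whose degrees are bounded by the partitions fitting in a rectangle, and tensoring $C$ down over $A$ gives exactly the same free module structure.

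The only subtle point will be checking that the iterated Grothendieck presentation really matches the tensor-product description, i.e.\ that no extra relations appear when passing from the sequential presentation (which adds $\xi_{i,\nu}$'s vertex by vertex) to the global one encoded by $C \otimes_A A(M^\theta)$. This is a purely formal verification: since $C$ is a polynomial ring and $A$ is generated over $\Q$ by the $x_{i,\nu}$, the ring $C \otimes_A A(M^\theta)$ is by construction $A(M^\theta)[t_{i,\nu}]$ modulo the ideal generated by $\sigma_\nu(t_{i,1},\dots,t_{i,d_i}) - c_\nu(U_i)$, which matches the presentation of $A(\Fl(U))$ produced above. So the main content is really Grothendieck's theorem, and the rest is bookkeeping.
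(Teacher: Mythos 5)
Your argument is correct and is exactly the route the paper takes: the paper derives Theorem \ref{groth} as an immediate consequence of Theorem \ref{groth_orig} applied to $\Fl(U)$ viewed as an iterated formation of complete flag bundles, which is what you spell out. The identification of the iterated Grothendieck presentation with $C\otimes_A A(M^\theta)$ (using that $A=\Q[x_{i,\nu}]$ is a polynomial ring and $C$ is free over it) is the same bookkeeping the paper leaves implicit.
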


We can easily see that $C$ is a free $A$-module. For example, an $A$-basis of $C$ is given by $(t^\lambda \mid \lambda \in \Delta)$, where
$$
	t^\lambda := \prod_{i \in Q_0} \left( t_{i,1}^{\lambda_{i,1}}\ldots t_{i,d_i}^{\lambda_{i,d_i}} \right)
$$
and $\Delta$ is the set of all tuples $\lambda = (\lambda_{i,\nu} \mid (i,\nu) \in \tilde{Q}_0)$ of non-negative integers with $\lambda_{i,\nu} \leq d_i - \nu$ for all $i$ and $\nu$. This implies that $A(\Fl(U))$ is a free $A(M^\theta)$-module.\\
Let $d' \leq d$ be a $\theta$-forbidden sub-dimension vector. For an arrow $\alpha: i \to j$, let $f^{d'}_\alpha \in C$ be defined as
$$
	f^{d'}_\alpha := \prod_{\mu = 1}^{d'_i} \prod_{\nu = d'_j+1}^{d_j} (t_{j,\nu} - t_{i,\mu}).
$$

\begin{defn*}
	For a $\theta$-forbidden sub-dimension vector $d'$ of $d$, we call $f^{d'} := \prod_{\alpha \in Q_1} f^{d'}_\alpha$ the \textbf{forbidden polynomial} associated to $d'$.
\end{defn*}

Corollary \ref{ko1} shows that $\Psi(f^{d'}) = 0$. Let $\BB$ be a basis of $C$ as an $A$-module. It is of the form $\BB = ( y_\lambda \mid \lambda \in \Delta )$. There exist uniquely determined $\tau_\lambda(d',\BB) \in A$ such that
$$
	f^{d'} = \sum_{\lambda \in \Delta} \tau_\lambda(d',\BB) \cdot y_\lambda.
$$

\begin{defn*}
	The elements $\tau_\lambda(d',\BB)$ are called \textbf{tautological relations} for $d'$ with respect to $\BB$.
\end{defn*}

We will show in the next section that, together with the linear relation $l := \sum_i a_i x_{i,1}$ that we figured out earlier, the tautological relations generate the kernel of $\Phi$ and are a complete system of relations for $A(M^\theta)$.
	\section{The Chow ring of $M^{\theta}(Q,d)$ is tautologically presented}
\newcommand{\coeff}{\mathrm{coeff}}
\newcommand{\Lamdba}{\Lambda}
Let $Q$ be a connected, acyclic quiver and $d$ be a coprime dimension vector for $Q$. Let $\theta$ be a stability condition for $Q$ such that $d$ is $\theta$-coprime. Fix integers $a_i$ with $\sum_{i \in Q_0} a_i d_i = 1$. Let $M^\theta := M^{\theta}(Q,d)$ be the moduli space and let $U$ be a universal representation (as defined in section 1).\\
Let $C$, $A$ and $W$ be as in the previous section. Fix a basis $\BB = (y_\lambda \mid \lambda )$ of $C$ as an $A$-module and let $\tau_\lambda(d') := \tau_\lambda(d',\BB)$ be the tautological relations for every $\theta$-forbidden sub-dimension vector $d'$ with respect to this basis. Let $l := \sum_i a_i x_{i,1}$ be the linear relation corresponding to the integers $a_i$.

This section is devoted to the proof of the following result:

\begin{thm}\label{thm}
	The map $A \to A(M^\theta)$ sending $x_{i,\nu}$ to $c_\nu(U_i)$ yields an isomorphism $A/\aa \cong A(M^\theta)$ of graded $\Q$-algebras, where $\aa$ is the ideal generated by the linear relation $l$ and the tautological relations $\tau_\lambda(d')$ for $d'$ running through all $\theta$-forbidden sub-dimension vectors of $d$ and $\lambda \in \Delta$.
\end{thm}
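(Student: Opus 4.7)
The plan is to show that the natural surjection $\bar\Phi: A/\aa \to A(M^\theta)$ is bijective by combining a direct analysis in the toric case, a Weyl-invariance reduction, and a Poincar\'e duality comparison. For well-definedness I note that $l$ maps to zero because the construction of $U$ via $\psi$ yields $\bigotimes_i(\det U_i)^{\otimes a_i}\cong\OO$, and each $\tau_\lambda(d')$ maps to zero because Corollary \ref{ko1} gives $\Psi(f^{d'})=0$ in $A(\Fl(U))$, while Theorem \ref{groth} exhibits $A(\Fl(U))$ as a free $A(M^\theta)$-module with basis $(\Psi(y_\lambda))_\lambda$; thus every coefficient $\Phi(\tau_\lambda(d'))$ must vanish. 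Surjectivity of $\bar\Phi$ is Theorem \ref{king2}.

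I would first treat the toric case $d=(1,\ldots,1)$, where $M^\theta$ is a smooth projective toric variety whose fan (the content of Lemma \ref{simplex} and Proposition \ref{toric_fans}) has rays indexed by $Q_0$ and whose missing faces correspond bijectively to $\theta$-forbidden subsets of $Q_0$. The Jurkiewicz--Danilov theorem then presents $A(M^\theta)$ as a polynomial ring in the ray classes modulo the Stanley--Reisner ideal and the linear relations carried by the ray vectors. Since $C=A$ when all $d_i=1$, one checks by inspection that the Stanley--Reisner generators coincide with the forbidden polynomials $f^{d'}$ and that the linear relations encoded in the fan reduce to $l$. This establishes the theorem in the toric case.

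For general $d$, I would pass to the covering quiver $\tilde Q$ with vertex set $\tilde Q_0 = \{(i,\nu): 1\leq\nu\leq d_i\}$, a bipartite arrow set covering each arrow of $Q$, dimension vector $\tilde d=(1,\ldots,1)$, and lifted stability $\tilde\theta_{(i,\nu)}:=\theta_i$. Choosing the standard maximal torus $T\subset G$ identifies $R(Q,d)=R(\tilde Q,\tilde d)$, while $T$-semistability is strictly coarser than $G$-semistability. The Weyl group $W=\prod_i S_{d_i}$ acts on the toric moduli $\tilde M:=M^{\tilde\theta}(\tilde Q,\tilde d)$, and on the algebraic side $C^W=A$. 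The goal is then to identify $A(M^\theta)$ with a $W$-invariant subquotient of $A(\tilde M)$; the discrepancy between $R^\theta$ and $R^{\tilde\theta}$ and its image on Chow groups is controlled by Lemmas \ref{anti_invar} and \ref{degrees}, which handle the $W$-anti-invariant kernel of the restriction map and the top-degree behaviour respectively. Symmetrising the toric Stanley--Reisner relations under $W$ should then yield exactly the forbidden polynomials $f^{d'}$ for $(Q,d)$, because $W$-orbits of $\tilde\theta$-forbidden sub-dimension vectors of $\tilde d$ are indexed by the $\theta$-forbidden sub-dimension vectors of $d$.

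To close, I would invoke Lemma \ref{perfPair} asserting that $A/\aa$ satisfies Poincar\'e duality in the top degree equal to $\dim M^\theta$. Since $\bar\Phi$ is a degree-preserving surjection between finite-dimensional graded $\Q$-algebras each with Poincar\'e duality, and is already an isomorphism in degree zero and (via the toric step combined with $W$-invariance) in top degree, the perfect pairings force bijectivity in every intermediate degree. The main obstacle I expect lies in the third step: translating the geometric comparison between $G$- and $T$-stability into a purely algebraic statement about $W$-invariants of the presented ring, and verifying that symmetrising the toric Stanley--Reisner relations produces exactly the forbidden polynomials $f^{d'}$ of $(Q,d)$ with no extraneous contributions, will require subtle use of the anti-invariant structure in Lemma \ref{anti_invar} together with precise control of the top-degree pairing from Lemma \ref{degrees}.
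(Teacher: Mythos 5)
Your architecture is the paper's: well-definedness and surjectivity exactly as you describe, the toric case $d=\one$ via the fan and Danilov's theorem, reduction of the general case through the covering quiver and the Weyl group $W$, and a Poincar\'e-duality argument to force injectivity. Two of your intermediate claims, however, are not what actually happens, and both are load-bearing. First, in the toric case the rays of the fan of $M^\theta$ are \emph{not} indexed by $Q_0$: they are the images $s_*\lambda_\alpha$ of the coordinate rays of $R(Q,\one)$, hence indexed by (certain) \emph{arrows} of $Q$, and Danilov's theorem presents $A(M^\theta)$ as a quotient of a polynomial ring in arrow-variables $x_\alpha$. The real content of Proposition \ref{thm_toric} is the translation back to vertex-variables: pulling the Stanley--Reisner ideal back along $\chi_\alpha\mapsto\chi_j-\chi_i$, using the exact sequence of character lattices to collapse the fan's linear relations to the single relation $l$, and invoking Lemma \ref{lemRepSubset} together with a divisibility argument to show the resulting ideal coincides with the one generated by the $f^{I'}$. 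This is not an "inspection" step, and with rays indexed by $Q_0$ it would not even have the right number of generators.

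Second, your mechanism for connecting the toric presentation to $\aa$ --- "symmetrising the Stanley--Reisner relations yields the forbidden polynomials" --- is not correct and would not produce the tautological relations $\tau_\lambda(d')$, which are by definition the \emph{coefficients} of $f^{d'}$ with respect to an $A$-basis of $C$, not symmetrizations (note $p(f^{d'})\neq f^{d'}$). The paper's bridge is the identity $\aa=\coeff_A(\cc,C)$ for the toric ideal $\cc$ generated by $l$ and all $wf^{d'}$, proved via $\coeff_A(wf^{d'},C)=\coeff_A(f^{d'},C)$; one then shows $p(\cc)\sub\aa$ by $A$-linearity of $p$, and it is $A/p(\cc)$ --- identified through Lemma \ref{anti_invar} with the $W$-anti-invariants of $A(\tilde{Y})$ shifted by $\deg\Delta$ --- and not $A/\aa$ on which Lemma \ref{perfPair} establishes the perfect pairing. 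Injectivity of $A/p(\cc)\to A(M^\theta)$ then gives $\ker=p(\cc)=\aa$ only a posteriori. Without the coefficient-ideal identity the tautological relations never actually enter your argument, so the reduction from the general case to the toric one does not close.
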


\begin{rem*}
	For applications, it may be useful to compute the tautological relations $\tau(d')$ with respect to different bases for different forbidden sub-dimension vectors $d'$, i.e. we choose a basis $\BB^{d'}$ for every $d'$. A close inspection of the proof shows that the theorem remains valid under this slight generalization.
\end{rem*}

The proof proceeds in several steps. We start with some simple reductions. Remember the commuting square from above
\begin{center}
	\begin{tikzpicture}[description/.style={fill=white,inner sep=2pt}]
		\matrix(m)[matrix of math nodes, row sep=2em, column sep=4em, text height=1.5ex, text depth=0.25ex]
		{
			C & A(\Fl(U)) \\
			A & A(M^\theta). \\
		};
		\path[->, font=\scriptsize]
		(m-1-1) edge node[auto] {$\Psi$} (m-1-2)
		(m-2-1) edge (m-1-1)
		(m-2-2) edge node[auto] {$p^*$} (m-1-2)
		(m-2-1) edge node[auto] {$\Phi$} (m-2-2);
	\end{tikzpicture}
\end{center}
Evidently, $\Psi(l) = \Phi(l) = 0$. If $f^{d'} = \sum_\lambda \tau_\lambda y_\lambda$ then $0 = \Psi(f^{d'}) = \sum_\lambda \Phi(\tau_\lambda) \Psi(y_\lambda)$, and thus, Theorem \ref{groth} yields $\Phi(\tau_\lambda) = 0$. Therefore, we obtain $\Phi(\aa) = 0$, and consequently, $\Phi$ induces $\bar{\Phi}: A/\aa \to A(M^\theta)$. Theorem \ref{king2} yields that $\Phi$ is onto, so we obtain the surjectivity of $\bar{\Phi}$. Hence, it remains to prove that $\bar{\Phi}$ is injective.

Furthermore, we note that there is no loss of generality in assuming $\theta(d) = 0$. This is because neither multiplication of the stability condition with a positive integer, nor adding an integral multiple of $\dim$ changes the set of forbidden sub-dimension vectors.

The following is inspired by the proof of a result due to Ellingsrud and Str\o{}mme (cf. \cite[Theorem 4.4]{ES}). Like they do, we first prove the desired result for a torus quotient using methods of toric geometry. In our situation, this amounts to choosing the dimension vector consisting of ones only. Afterwards, we reduce the general case to a toric one. Ellingsrud and Str\o{}mme use a symmetrization map $p$ to obtain the ideal of relations. This map will also play a role in the following proof: We show that the ideal of tautological relations contains the image via $p$ of the ideal generated by the forbidden polynomials. After having done so, we proceed in almost the same way as in \cite{ES} (cf. Lemmas \ref{anti_invar}, \ref{degrees} and \ref{perfPair}).

\subsection{The toric case}

We prove that $\bar{\Phi}$ is injective when $d = \one$, the dimension vector that consists of ones entirely. A forbidden sub-dimension vector of $\one$ is of the form $\one_{I'}$, the characteristic function on a subset $I'$. Denote $\theta(I') := \theta(\one_{I'})$. Using this description of sub-dimension vectors, Theorem \ref{thm}, which we want to prove for $d = \one$, reads like this:

\begin{prop} \label{thm_toric}
	There is an isomorphism $\Q[t_i \mid i \in Q_0]/\aa \cong A(M^\theta)$ of graded $\Q$-algebras sending $t_i$ to $c_1(U_i)$, where $\aa$ is the ideal generated by functions $l = \sum_i a_i t_i$ and
	$$
		f^{I'} = \prod_{\alpha: i \to j,\ i \in I',\ j \notin I'} (t_j - t_i)
	$$
	with $I'$ running through all subsets of $Q_0$ with $\theta(I') < 0$.
\end{prop}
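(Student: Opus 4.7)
The strategy is equivariant intersection theory. For $d = \one$ the group $PG$ is a torus of rank $|Q_0|-1$ acting freely on $R^\theta$ with quotient $M^\theta$, so $M^\theta$ is a smooth projective toric variety and $A(M^\theta) \cong A^*_{PG}(R^\theta)_\Q$. The plan is to compute the source $A^*_{PG}(R)_\Q$ and the kernel of the restriction $A^*_{PG}(R)_\Q \twoheadrightarrow A^*_{PG}(R^\theta)_\Q$ separately, and match them with $\Q[t_i]$, the linear relation $l$, and the forbidden polynomials $f^{I'}$.

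For the source, the structural map $R \to \mathrm{Spec}\,\kk$ is a $PG$-equivariant affine bundle (since $R$ is a $PG$-representation), so by homotopy invariance $A^*_{PG}(R)_\Q \cong \mathrm{Sym}(X^*(PG))_\Q$. The class $t_i = c_1(U_i)$ pulls back along $\pi$ to the equivariant first Chern class of $E_i$, which corresponds to the $PG$-character $\chi_i - \psi$. Using $\sum_i a_i d_i = 1$ with $d = \one$, one checks that the $t_i$'s span $X^*(PG)_\Q$ subject to the unique (up to scalar) linear relation $l = \sum_i a_i t_i = 0$, giving $A^*_{PG}(R)_\Q = \Q[t_i]/(l)$.

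For the kernel, I would apply the excision sequence
$$A^*_{PG}(R \setminus R^\theta) \to A^*_{PG}(R) \to A^*_{PG}(R^\theta) \to 0.$$
Because $d = \one$, $M \in R$ has a subrepresentation of dimension $\one_{I'}$ if and only if $M_\alpha = 0$ for every arrow $\alpha: i \to j$ with $i \in I'$ and $j \notin I'$; hence $R \setminus R^\theta = \bigcup_{\theta(I')<0} Z_{I'}$, where $Z_{I'}$ is the corresponding coordinate subspace of $R$. Each $Z_{I'}$ is a transverse intersection of coordinate hyperplanes whose $PG$-weights are $t_j - t_i$, so its equivariant fundamental class equals $f^{I'} = \prod_{\alpha: I' \to Q_0 \setminus I'}(t_j - t_i)$. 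Since every irreducible $PG$-invariant subvariety of $R \setminus R^\theta$ is contained in some $Z_{I'}$, and each $Z_{I'}$ is itself an affine $PG$-representation (so $i_*: A^*_{PG}(Z_{I'}) \to A^*_{PG}(R)$ acts by multiplication by $[Z_{I'}]$), the kernel is the ideal generated by the $f^{I'}$; together with $l$ this is exactly $\aa$.

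The main obstacle is making the excision step rigorous: one has to rule out hidden relations and verify that the $Z_{I'}$ exhaust the unstable locus with the expected combinatorics. This is where the detailed description of the toric fan of $M^\theta$ from Lemma \ref{simplex} and Proposition \ref{toric_fans} (cf.\ also \cite{Hille:98}) enters essentially: it provides an explicit stratification of $R \setminus R^\theta$ indexed by the forbidden subsets $I' \subseteq Q_0$, and via the Danilov--Jurkiewicz presentation of the Chow ring of a smooth projective toric variety one sees that the Stanley--Reisner monomial relations translate, in the $t_i$-generators, precisely into the $f^{I'}$, while the linear relations reduce to $l$. The preceding steps are formal; this combinatorial matching is where the real work lies.
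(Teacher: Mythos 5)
Your route is genuinely different from the paper's and, properly completed, it works. The paper never touches equivariant Chow groups: it proves that $M^\theta$ is a toric variety, computes its fan explicitly (Lemma \ref{simplex} and Proposition \ref{toric_fans}: the cones are the $\sigma_J^-$ for $J\in\Phi_\theta$, with primitive ray generators $s_*\lambda_\alpha$), applies Danilov's presentation $A(M^\theta)\cong \Q[x_{\alpha_1},\dots,x_{\alpha_r}]/(\rr_1+\rr_2)$, and then pulls the Stanley--Reisner ideal back through $S(T^+)\to S(PT)$ to show it becomes the ideal generated by the products $\prod_{\alpha\in J}(t_j-t_i)$ for $J\notin\Phi_\theta$, which via Lemma \ref{lemRepSubset} coincides with the ideal generated by the $f^{I'}$. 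Your approach replaces all of this by the identification $A(M^\theta)\cong A^*_{PG}(R^\theta)_\Q$ (free action), homotopy invariance $A^*_{PG}(R)_\Q\cong\mathrm{Sym}(X^*(PG))_\Q=\Q[t_i]/(l)$, and the excision sequence; your identification of the unstable locus as $\bigcup_{\theta(I')<0}Z_{I'}$ and of $t_i=c_1(U_i)$ with the character $\chi_i-\psi$ are both correct. What each buys: your argument is shorter, bypasses the fan computation entirely, and makes the integrality/arbitrary-characteristic remark after Proposition \ref{thm_toric} transparent; the paper's argument produces the explicit fan of $M^\theta$ as a byproduct, which is of independent interest and connects to Hille's work.

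One correction, though: your closing paragraph misplaces the difficulty and then retreats, unnecessarily, to the paper's machinery. There are no ``hidden relations'' to rule out in the excision step, and you do not need Lemma \ref{simplex}, Proposition \ref{toric_fans}, or Danilov--Jurkiewicz at all. The kernel of $A^*_{PG}(R)\to A^*_{PG}(R^\theta)$ is the image of $A^{PG}_*(R\setminus R^\theta)$, and for any finite union of closed subvarieties the Chow group is generated by the images of the Chow groups of the pieces (every irreducible subvariety lies in one of them); so the image is $\sum_{I'}\mathrm{im}\bigl(A^{PG}_*(Z_{I'})\to A^{PG}_*(R)\bigr)$. Since each $Z_{I'}$ is a subrepresentation, $i^*$ is an isomorphism of polynomial rings by homotopy invariance, and the self-intersection formula $i_*i^*x=x\cdot[Z_{I'}]$ identifies the image with the principal ideal $(f^{I'})$ exactly, not just up to error terms. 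That closes the argument; the ``combinatorial matching'' you defer to the fan description is precisely the elementary observation that $M$ is unstable iff all arrows from some forbidden $I'$ to its complement vanish on $M$, which you had already stated.
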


\begin{rem*}
	As C. Chindris has pointed out to me, Proposition \ref{thm_toric} remains valid also with integral coefficients and over (algebraically closed) fields of arbitrary characteristic.
\end{rem*}

We will prove Proposition \ref{thm_toric} by showing that $M^\theta$ is a toric variety and giving an explicit description of its toric fan. This enables us to employ a Theorem of Danilov which displays the Chow ring of a non-singular projective toric variety in terms of generators and relations.

Let $M$ be a representation of $(Q,\one)$. A sub-representation $M'$ consists of subspaces $V'_i \sub V_i \cong \kk$ such that $M_\alpha V'_i \sub V'_j$ holds for every $\alpha: i \to j$. This is equivalent to requiring $V'_j \neq 0$ for every $\alpha: i \to j$ with $M_\alpha \neq 0$ and $V'_i \neq 0$. Define the subset $I'$ (which depends on $M'$) by
$$
	I' := \{ i \in Q_0 \mid V'_i \neq 0 \}.
$$
This subset satisfies the following condition:
\begin{enumerate}[label=(\arabic{*}), ref=\arabic{*}]
	\item For every arrow $\alpha: i \to j$ with $i \in I'$ and $M_\alpha \neq 0$, it follows that $j \in I'$.\label{eqn1}
\end{enumerate}

Conversely, every subset $I' \sub Q_0$ satisfying condition (\ref{eqn1}) defines a sub-repre\-sen\-tation $M'$ of $M$. The dimension vector of this sub-representation $M'$ is $\one_{I'}$. We obtain that a representation $M$ of $(Q,\one)$ is (semi-)stable if and only if $\theta(I') > 0$ (or $\theta(I') \geq 0$) for every subset $I' \sub Q_0$ that has the property (\ref{eqn1}).\\
Let us have a look at property (\ref{eqn1}) again. It actually does not depend on $M$, but only on whether or not $M_\alpha = 0$. So, if we define
$
	\Supp(M) := \{ \alpha \in Q_1 \mid M_\alpha \neq 0 \},
$
it is clear that the (semi-)stability of $M$ only depends on the set $\Supp(M)$ of arrows.

\begin{defn*}
	A subset $J \sub Q_1$ is called $\theta$-\textbf{(semi-)stable} if there exists a $\theta$-(semi-)stable representation $M \in R$ with $J = \Supp(M)$.
\end{defn*}

We define $\SS(J)$ to be the the set of all subsets $I' \sub Q_0$ such that $j \in I'$ for every arrow $\alpha: i \to j$ with $i \in I'$ and $\alpha \in J$. We have seen:

\begin{lem} \label{lemRepSubset}
	A subset $J \sub Q_1$ is $\theta$-(semi-)stable if and only if $\theta(I') > 0$ (or $\theta(I') \geq 0$, respectively) for every $I' \in \SS(J)$.
\end{lem}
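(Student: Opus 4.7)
My plan is to unfold both directions directly from the correspondence between sub-representations of a thin representation and the subsets $I' \in \SS(J)$, which has already been spelled out in the paragraph preceding the lemma. Since we may assume $\theta(\one)=0$, the slope of any non-zero sub-representation $M'$ of dimension vector $\one_{I'}$ is $-\theta(I')/|I'|$, so the (semi-)stability inequality $\mu(M') \leq \mu(M)$ (resp.\ $<$) translates literally into $\theta(I')\geq 0$ (resp.\ $>0$); the trivial cases $I'=\emptyset$ and $I'=Q_0$ carry no content since they satisfy $\theta(I')=0$ tautologically and do not correspond to proper non-zero sub-representations.

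For the forward direction I would choose any $\theta$-(semi-)stable $M \in R$ with $\Supp(M) = J$ furnished by the definition. Given $I' \in \SS(J)$ (with $\emptyset \neq I' \neq Q_0$ in the stable case), I set $V'_i := V_i$ for $i \in I'$ and $V'_i := 0$ otherwise. The closure property defining $\SS(J)$ guarantees that for every arrow $\alpha:i\to j$ we have $M_\alpha V'_i \subseteq V'_j$: if $i\notin I'$ there is nothing to check, and if $i\in I'$ then either $M_\alpha=0$ (so $\alpha\notin J$, no constraint) or $M_\alpha\neq 0$, forcing $j\in I'$ and hence $V'_j = V_j$. Thus $(V'_i)_i$ defines a sub-representation of dimension vector $\one_{I'}$, and the (semi-)stability hypothesis on $M$ gives $\theta(I') \geq 0$ (resp.\ $>0$).

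For the converse I would exhibit an explicit representation $M$ with prescribed support: define $M_\alpha: \kk \to \kk$ to be the identity whenever $\alpha \in J$ and the zero map otherwise, so that $\Supp(M) = J$. Any non-zero sub-representation $M'$ of $M$ is determined by the set $I' := \{i \in Q_0 \mid V'_i \neq 0\}$, and the sub-representation condition $M_\alpha V'_i \subseteq V'_j$ becomes exactly the closure condition defining $\SS(J)$; thus $I' \in \SS(J)$. Applying the standing hypothesis $\theta(I')\geq 0$ (resp.\ $>0$ when $I'$ is proper and non-empty) gives $\mu(M')\leq 0 = \mu(M)$ (resp.\ $<$), proving that $M$ is $\theta$-(semi-)stable and witnessing that $J$ is $\theta$-(semi-)stable.

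The only step with any subtlety is the bookkeeping around the trivial subsets $I'=\emptyset$ and $I'=Q_0$ in the stable case, since strict positivity fails there but these never give proper non-zero sub-representations, so they should be understood as excluded from the quantifier (consistent with the convention used in the paragraph immediately above the lemma). Everything else is a direct translation between the combinatorial data $\SS(J)$ and sub-representations in the $d=\one$ setting.
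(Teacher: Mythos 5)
Your proposal is correct and follows essentially the same route as the paper, which proves the lemma implicitly via the preceding discussion ("We have seen"): the bijection between sub-representations of a thin representation $M$ and subsets $I'$ closed under arrows in $\Supp(M)$, together with the observation that (semi-)stability of $M$ therefore depends only on $\Supp(M)$. Your explicit witness (identity maps on $J$, zero elsewhere) and the remark about the trivial subsets $\emptyset$, $Q_0$ just make explicit what the paper leaves implicit.
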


This is the simplest way to describe (semi-)stability of a representation of $(Q,\one)$. On the other hand, we can interpret $M$ as an element of the variety $R := R(Q,\one) = \bigoplus_{\alpha} R_\alpha$ with $R_\alpha = \Hom(V_i,V_j) \cong \kk$. Let us work out another characterization of (semi-)stability from this geometric point of view.

Let $T^+$ be the maximal torus of $\Gl(R)$ that corresponds to the decomposition $R = \bigoplus_\alpha R_\alpha$. Let $T := G(Q,\one) = \prod_i \G_m$. The action of $T$ on $R$ is compatible with the decomposition of $R$, thus induces a morphism $r: T \to T^+$ of tori. The kernel of this morphism $r$ is the image $\Gamma$ of the diagonal embedding $\G_m \to T$ (as we assume the quiver to be connected). This gives an embedding of $PT := T/\Gamma \to T^+$. Let $T^- := T^+/PT$. We have an exact sequence of tori
$$
	1 \to \G_m \to T \xto{}{r} T^+ \xto{}{s} T^- \to 1.
$$
This gives rise to exact sequences
$$
	0 \to \Z \to \Lambda \xto{}{r_*} \Lambda^+ \xto{}{s_*} \Lambda^- \to 0
$$
of the corresponding lattices of one-parameter subgroups and
$$
	0 \ot \Z \ot \Upsilon \xot{}{r^*} \Upsilon^+ \xot{}{s^*} \Upsilon^- \ot 0
$$
of the character lattices (cf. for example \cite{Springer:98}). Denote by $\langle \cdot,\cdot \rangle$ the pairings between the characters and the one-parameter subgroups. We have dual bases $\Upsilon = \bigoplus_i \Z \chi_i$ and $\Lambda = \bigoplus_i \Z \lambda_i$, as well as $\Upsilon^+ = \bigoplus_\alpha \Z \chi_\alpha$ and $\Lambda^+ = \bigoplus_\alpha \Z \lambda_\alpha$. Let $\delta := \sum_i \lambda_i \in \Lambda$. This is the image of $1$ under the map $\Z \to \Lambda$.

By Mumford's criterion (cf. \cite[Theorem 2.1]{GIT:94}) reformulated by King (cf. \cite[Proposition 2.6]{King:94}), we obtain the following characterization of (semi-)stability.
\begin{thm}[Mumford, King]
	For a representation $M \in R$ with support $J \sub Q_1$, the following are equivalent:
	\begin{enumerate}
		\item $M$ is $\theta$-(semi-)stable.
		\item For every $\lambda \in \Lambda_\R - \R\delta$ such that $\langle \chi_\alpha, r_*\lambda \rangle \geq 0$ for all $\alpha \in J$, we have $\langle \theta, \lambda \rangle > 0$ (or $\langle \theta, \lambda \rangle \geq 0$, respectively).
	\end{enumerate}
\end{thm}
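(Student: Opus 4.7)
The plan is to derive this as a direct specialization of the Hilbert--Mumford numerical criterion, reformulated as in \cite[Proposition 2.6]{King:94}, together with a bookkeeping translation of the one-parameter subgroups into the character/cocharacter language set up above. The torus $PT$ acts on $R$ linearized by the character of $T$ given by $\theta$ (which is trivial on $\Gamma$ because $\theta(d) = 0$, cf.\ the reduction preceding the toric case). Mumford's criterion \cite[Theorem 2.1]{GIT:94} says that $M$ is $\theta$-(semi-)stable if and only if, for every non-trivial one-parameter subgroup $\mu : \G_m \to PT$ such that $\lim_{t \to 0} \mu(t) \cdot M$ exists in $R$, one has $\langle \theta, \mu \rangle > 0$ (resp.\ $\geq 0$).

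The first step is to identify the set of admissible one-parameter subgroups. Since $PT = T/\Gamma$ and $\Gamma$ is the image of the diagonal $\G_m \hookrightarrow T$, cocharacters of $PT$ correspond to cosets in $\Lambda/\Z\delta$, and the non-trivial ones are represented by $\lambda \in \Lambda \setminus \Z\delta$. Second, one must compute the limit condition: the action of $T$ on $R$ factors through $r : T \to T^+$, and on the summand $R_\alpha$ the torus $T^+$ acts by the character $\chi_\alpha$, so $\lambda(t)$ scales a non-zero $M_\alpha$ by $t^{\langle \chi_\alpha, r_*\lambda \rangle}$. Hence $\lim_{t\to 0}\lambda(t)\cdot M$ exists in $R$ precisely when $\langle \chi_\alpha, r_*\lambda \rangle \geq 0$ for every $\alpha \in J = \Supp(M)$. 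Combining these two observations already gives the integral version of the equivalence.

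Finally, to promote from $\Lambda$ to $\Lambda_\R$, I would argue by density and homogeneity: both the condition $\langle \chi_\alpha, r_*\lambda \rangle \geq 0$ and the inequality $\langle \theta, \lambda \rangle \geq 0$ (resp.\ $> 0$) are defined by rational hyperplanes, hence cut out the same cones in $\Lambda_\R$ as in $\Lambda_\Q$, and scaling a rational $\lambda$ by a positive integer to clear denominators does not affect any of them. For the strict inequality (stability), one notes that the hypothesis $\langle \chi_\alpha, r_* \lambda\rangle \geq 0$ on $J$ defines a rational polyhedral cone on which $\langle \theta, \cdot \rangle$ is positive on integer points off $\R\delta$; by continuity and rational density, positivity extends to all real $\lambda$ off $\R\delta$. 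I expect no genuine obstacle here; the only subtlety is to make sure one is careful about the kernel $\Gamma$ so that $\theta$ and the condition $\lambda \notin \R\delta$ are compatibly interpreted on $PT$, which is exactly what the assumption $\theta(d) = 0$ (equivalently $\theta|_{\Z\delta} = 0$) guarantees.
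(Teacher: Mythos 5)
The paper offers no proof of this statement at all---it is quoted as Mumford's numerical criterion \cite[Theorem 2.1]{GIT:94} in King's reformulation \cite[Proposition 2.6]{King:94}---and your argument is precisely the translation that citation is meant to encode: identify nontrivial cocharacters of $PT$ with $\Lambda - \Z\delta$ modulo $\Z\delta$, compute the weight of $\lambda$ on $R_\alpha$ as $\langle \chi_\alpha, r_*\lambda\rangle$, and read off the limit condition on the support $J$. The one step I would tighten is the passage from integral to real $\lambda$ in the \emph{stable} case, since continuity plus density of rational points only yields $\langle\theta,\lambda\rangle \geq 0$; instead, write an arbitrary element of the (rational polyhedral) cone as a non-negative combination of its lineality space $\R\delta$ and finitely many rational generators, on each of which strict positivity is already known from the integral case.
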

\newcommand{\cone}{\mathrm{cone}}
The varieties $R^\theta$ and $M^\theta = R^\theta/T$ are toric: The torus $T^+$ acts on $R^\theta$ with a dense orbit isomorphic to $PT$ and therefore, $T^-$ also acts on $M^\theta$ with a dense orbit isomorphic to $T^-$. As references on toric geometry, we recommend Danilov's article \cite{Danilov} and Fulton's book \cite{Fulton:93}. An application of toric geometry to quiver moduli has been done by Hille \cite{Hille:98}. Let $\Delta^+$ be the fan of $R^\theta$ in $\Lambda^+_\R$ and $\Delta^-$ be the fan of $M^\theta$ in $\Lambda^-_\R$. We want to give an explicit description of these fans.\\
We need some auxiliary results to finally obtain this description in Proposition \ref{toric_fans}. Define
$$
	\Phi_\theta := \left\{ J \sub Q_1 \mid J^c \text{ is $\theta$-(semi-)stable } \right\},
$$
where $J^c := Q_1 - J$. Note that $\theta$-stability and $\theta$-semi-stability coincide as we have already pointed out. As every subset of $Q_1$ containing a (semi-)stable set is itself (semi-)stable, we obtain that $\Phi_\theta$ is a simplicial complex. For every $J \sub Q_1$, let $\sigma^+_J := \cone(\lambda_\alpha \mid \alpha \in J) = \{ \sum_{\alpha \in J} a_\alpha \lambda_\alpha \mid a_\alpha \geq 0 \}$ in $\Lambda^+_\R$ and let $\sigma_J^- := s_*\sigma^+_J \sub \Lambda^-_\R$.
\begin{lem} \label{simplex}
	\begin{enumerate}
		\item For every $J \in \Phi_\theta$, the cone $\sigma_J^-$ is a simplex of dimension $\sharp J$.
		\item For $J_1, J_2 \in \Phi_\theta$, we have $\sigma_{J_1}^- \cap \sigma_{J_2}^- = \sigma_{J_1 \cap J_2}^-$.
		\item If $J' \notin \Phi_\theta$ such that $\{\alpha\} \in \Phi_\theta$ holds for every $\alpha \in J'$ then $\sigma_{J'}^- \notin \{ \sigma_J^- \mid J \in \Phi_\theta \}$.
		\item If $J = \{\alpha_0\} \in \Phi_\theta$ then $s_*\lambda_{\alpha_0}$ is the minimal lattice point of $\sigma_J^-$.
	\end{enumerate}
\end{lem}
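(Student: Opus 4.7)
The unifying idea is to exploit the exact sequence $0 \to \Z \to \Lambda \xrightarrow{r_*} \Lambda^+ \xrightarrow{s_*} \Lambda^- \to 0$: every element of $\ker s_*$ is $r_*\mu$ for some $\mu = \sum_i n_i \lambda_i$, and its coefficient at $\lambda_\alpha$ in the basis of $\Lambda^+$ equals $n_j - n_i$ for $\alpha\colon i \to j$. Hence any linear dependence among the $s_*\lambda_\alpha$ corresponds to a vertex ``potential'' $n$ on $Q_0$, and the stability of various $J^c \subseteq Q_1$ translates, via Lemma \ref{lemRepSubset}, into $\theta$-positivity of level sets of $n$. The plan is to derive all four statements from this dictionary.

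For part (1), I would start from a linear relation $\sum_{\alpha \in J} c_\alpha s_*\lambda_\alpha = 0$; the corresponding potential $n$ must then satisfy $n_j = n_i$ on every arrow of $J^c$, so $n$ is constant on each connected component of the subquiver $J^c$. Stability of $J^c$ combined with $\theta$-coprimeness rules out multiple components: each component $I'$ and its complement would both lie in $\SS(J^c)$ and both would have to satisfy $\theta(\,\cdot\,) > 0$, contradicting $\theta(Q_0) = 0$. Hence $J^c$ is connected, $n$ is globally constant, every $c_\alpha$ vanishes, and $\sigma_J^-$ is a simplicial cone of dimension $\sharp J$.

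For (2), the inclusion $\sigma_{J_1 \cap J_2}^- \subseteq \sigma_{J_1}^- \cap \sigma_{J_2}^-$ is immediate. For the reverse I would write a common element as $s_*\sum_{\alpha \in J_1} a_\alpha\lambda_\alpha = s_*\sum_{\alpha \in J_2} b_\alpha\lambda_\alpha$ with $a, b \geq 0$, so that the difference is some $r_*\mu$. Reading off signs, $n$ is non-increasing along arrows in $J_1^c$ and non-decreasing along arrows in $J_2^c$, placing every lower level set $\{n \leq t\}$ in $\SS(J_1^c)$ and every upper level set $\{n > t\}$ in $\SS(J_2^c)$. Stability of both $J_1^c$ and $J_2^c$ would force $\theta(\{n \leq t\}) > 0$ and $\theta(\{n > t\}) > 0$ whenever these sets are nontrivial, but they partition $Q_0$ and so their $\theta$-values sum to zero; this is only consistent if, for every $t$, one of them is trivial, which forces $n$ to be constant. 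Then $r_*\mu = 0$, linear independence in $\Lambda^+$ yields $a_\alpha = b_\alpha$ for every $\alpha$, and the common vector is supported on $J_1 \cap J_2$.

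Parts (3) and (4) follow quickly. For (3), if $\sigma_{J'}^- = \sigma_J^-$ for some $J \in \Phi_\theta$ and some $\alpha \in J' \setminus J$ existed, part (2) applied to $\{\alpha\}$ and $J$ would give $\sigma_{\{\alpha\}}^- \cap \sigma_J^- = \sigma_\emptyset^- = \{0\}$, while $\sigma_{\{\alpha\}}^- \subseteq \sigma_{J'}^- = \sigma_J^-$ is a ray by (1)---a contradiction. Hence $J' \subseteq J$, and then $\sigma_{J'}^-$ is a proper face of the simplex $\sigma_J^-$ unless $J' = J$, contradicting $J' \notin \Phi_\theta$. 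For (4), I would show that $s_*\lambda_{\alpha_0}$ is primitive in $\Lambda^-$: if $s_*\lambda_{\alpha_0} = k v$ with $k \geq 2$, choosing an integer lift $\tilde v \in \Lambda^+$ of $v$ yields $\mu \in \Lambda$ satisfying $n_j - n_i \equiv 0 \pmod{k}$ across every arrow except $\alpha_0$ and $n_j - n_i \equiv 1 \pmod{k}$ across $\alpha_0$; but $\{\alpha_0\}^c$ is connected by (1), so the reduction $\bar n \colon Q_0 \to \Z/k$ is constant, forcing $1 \equiv 0 \pmod{k}$. The main obstacle will be (2), in particular tracking which subset of arrows (in $J_1^c$ or in $J_2^c$) governs which direction of monotonicity of $n$ and carefully handling the strict-versus-nonstrict inequalities coming from $\theta$-stability together with $\theta$-coprimeness.
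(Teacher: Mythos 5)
Your proof is correct, and for parts (i)--(iii) it is in substance the paper's argument in combinatorial clothing: where the paper feeds the one-parameter subgroup $\lambda$ with $\lambda^+ = r_*\lambda$ (resp.\ $\lambda' - \lambda'' = r_*\lambda$) into the Mumford--King numerical criterion and derives the contradiction $\langle \theta,\lambda\rangle > 0$ together with $\langle \theta,-\lambda\rangle > 0$, you unwind the same kernel element into a vertex potential $n$ and apply the equivalent subset criterion of Lemma \ref{lemRepSubset} to a connected component (resp.\ a level set of $n$) and its complement --- two proper nonempty subsets of $Q_0$ whose $\theta$-values are forced to be positive yet sum to $\theta(Q_0)=0$. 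Part (iii) is essentially identical to the paper's. Part (iv) is where you genuinely diverge, to your advantage: the paper proves that $s_*\lambda_{\alpha_0}$ is primitive or zero for an \emph{arbitrary} arrow via a case distinction on unoriented paths through $\alpha_0$, constructing an explicit preimage of $\lambda_{\alpha_0}$ when removing $\alpha_0$ disconnects $Q$; you instead use the hypothesis $\{\alpha_0\}\in\Phi_\theta$ to conclude that the spanning subquiver on the arrow set $Q_1 - \{\alpha_0\}$ is connected, whence the reduction of the potential modulo $k$ is constant on $Q_0$ and $1\equiv 0 \pmod{k}$ follows at once. This is shorter and cleaner, at the cost of covering only the case the lemma actually states rather than all arrows. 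One presentational point: the connectedness of the spanning subquiver on $J^c$ for $J\in\Phi_\theta$ is a byproduct of your \emph{argument} for (i), not of its statement, so when you invoke it in (iv) ``by (1)'' you should isolate that intermediate claim explicitly (and likewise record that $s_*\lambda_{\alpha_0}\neq 0$, needed for $\sigma_{\{\alpha_0\}}^-$ to be a genuine ray with a minimal lattice point, comes from (i)).
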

\begin{proof}
	\begin{enumerate}
		\item We have to show that the elements $s_*\lambda_\alpha$ with $\alpha \in J$ are linearly independent over the reals. Let us assume there were an element $0 \neq \lambda^+ = \sum_{\alpha \in J} b_\alpha \lambda_\alpha \in \sigma^+_J$ with $s_*\lambda^+ = 0$. Then, there exists $\lambda \in \Lambda_\R - \R\delta$ with $\lambda^+ = r_*\lambda$. For every $\alpha \notin J$, we get
		$$
			\langle \chi_\alpha, r_*\lambda \rangle = \langle \chi_\alpha, \lambda^+ \rangle = 0
		$$
		as $\lambda^+$ is supported in $J$. By stability of $J^c$, we obtain that $\langle \theta, \lambda \rangle > 0$. But, on the other hand, we also get $\langle \chi_\alpha, r_*(-\lambda) \rangle = 0$ and thus, $\langle \theta, -\lambda \rangle > 0$. A contradiction.
		\item Assume there existed $\lambda' = \sum_{\alpha \in J_1} b_\alpha' \lambda_\alpha \in \sigma^+_{J_1}$ and $\lambda'' = \sum_{\alpha \in J_2} b_\alpha'' \lambda_\alpha \in \sigma^+_{J_2}$ with $\lambda' \neq \lambda''$ and $s_*\lambda' = s_*\lambda''$. Then, there exists an element $\lambda \in \Lambda_\R - \R\delta$ such that $\lambda' - \lambda'' = r_*\lambda$. For all $\alpha \notin J_2$, we have
		$$
			\langle \chi_\alpha, r_*\lambda \rangle = b_\alpha' - b_\alpha'' = b_\alpha' \geq 0,
		$$
		and as $J_2^c$ is $\theta$-stable, we obtain that $\langle \theta, \lambda \rangle > 0$. But with the same argument, we get $\langle \chi_\alpha, r_*(-\lambda) \rangle = b_\alpha'' \geq 0$ for all $\alpha \notin J_1$ and therefore, $\langle \theta, -\lambda \rangle > 0$ by stability of $J_1^c$. Again, this is a contradiction.
		\item We assume there existed $J \in \Phi_\theta$ with $\sigma_J^- = \sigma_{J'}^-$. The set $J'$ cannot be contained in $J$, thus there exists an arrow $\alpha \in J' - J$. Consider $s_*\lambda_{\alpha} \in \sigma_{J'}^-$. As $\{\alpha\} \in \Phi_\theta$, part (i) yields $s_*\lambda_{\alpha} \neq 0$. But $s_*\lambda_{\alpha} \in \sigma^-_J \cap \sigma^-_{\{\alpha\}} = \sigma^-_\emptyset = 0$ by (ii).
		\item We will show that for every $\alpha_0 \in Q_1$, the generator $s_*\lambda_{\alpha_0}$ is either $0$ or primitive (i.e. cannot be displayed as a positive integer multiple of a lattice element, apart from itself). This proves the desired statement as $s_*\lambda_{\alpha_0} \neq 0$ if $\{\alpha_0\} \in \Phi_\theta$. Let $\alpha_0: i_0 \to j_0$ with $s_*\lambda_{\alpha_0} \neq 0$ and assume there were an integer $n > 1$ and an element $\lambda^- \in \Lambda^-$ such that $s_*\lambda_{\alpha_0} = n\lambda^-$. We find $\lambda^+ \in \Lambda^+$ with $s_*\lambda_{\alpha_0} = n\lambda^- = s_*(n\lambda^+)$. This implies that there exists $\lambda = \sum_i b_i \lambda_i \in \Lambda$ with
		$$
			\sum_{\alpha: i \to j} (b_j - b_i)\lambda_\alpha = r_*\lambda = n\lambda^+ - \lambda_{\alpha_0},
		$$
		which shows that $n$ divides $b_j - b_i$ if there exists an arrow $\alpha: i \to j$ with $\alpha \neq \alpha_0$, and also that $b_{j_0} - b_{i_0}$ is \emph{not} a multiple of $n$. Consequently, $n$ divides $b_j - b_i$ if there exists an unoriented path between $i$ and $j$ that does not involve $\alpha_0$, and $b_j - b_i$ is not a multiple of $n$ if there exists an unoriented path between $i$ and $j$ that passes through $\alpha_0$ exactly once. We distinguish two cases. If there exist vertices $i$ and $j$ (not necessarily distinct) such that two unoriented paths between $i$ and $j$ exist, one of which does not run through $\alpha_0$ and the other does exactly once, we get a contradiction. So, we are down to the case where such vertices $i$ and $j$ do not exist. In this situation, removing the arrow $\alpha_0$ splits the quiver into two disjoint sub-quivers $C^{1}$ and $C^2$ with $i_0 \in C^1$ and $j_0 \in C^2$. This means that in $Q$, there is no arrow from $C^1_0$ to $C^2_0$ apart from $\alpha_0$ and no 
arrow from $C^2_0$ to $C^1_0$. Consider the element $\lambda' := \sum_{i \in C^2_0} \lambda_i$. We obtain
		$$
			r_*\lambda' = \sum_{\alpha: i \to j} (\one_{C^2_0}(j) - \one_{C^2_0}(i)) \lambda_\alpha = \lambda_{\alpha_0}.
		$$
		In turn, this implies that $s_*\lambda_{\alpha_0} = s_*r_*\lambda' = 0$, which contradicts our assumption. \qedhere
	\end{enumerate}
\end{proof}
We need a simple algebraic result to calculate the invariant ring of the affine toric varieties $X_{\sigma^+_J}$ under the $T$-action.
\begin{lem} \label{hopf}
	Let $0 \to \Upsilon'' \xto{}{\phi} \Upsilon \xto{}{\psi} \Upsilon'$ be an exact sequence of lattices (or just abelian groups) and let $S$ be a sub-monoid of $\Upsilon$. Let $A$ be a (commutative) ring. Consider the co-action
	$$
		c: A[S] \to A[\Upsilon'] \otimes_A A[S] = A[\Upsilon' \times S],
	$$
	defined by $c(t^\chi) = t^{\psi \chi} \otimes t^\chi = t^{(\psi \chi,\chi)}$. Then, the co-invariant ring $A[S]^{A[\Upsilon']}$ equals the subring $A[\phi^{-1}S]$.
\end{lem}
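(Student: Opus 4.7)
The plan is a direct computation on the canonical $A$-basis $\{t^\chi : \chi \in S\}$ of $A[S]$, comparing coefficients inside the free module $A[\Upsilon'] \otimes_A A[S]$. First I would write a general element as $x = \sum_{\chi \in S} a_\chi t^\chi$ (a finite sum) and rewrite the co-invariance condition $c(x) = 1 \otimes x$ as
$$
\sum_{\chi \in S} a_\chi\,(t^{\psi\chi} - 1) \otimes t^\chi \;=\; 0
$$
in $A[\Upsilon'] \otimes_A A[S]$. Since $\{1 \otimes t^\chi\}_{\chi \in S}$ is an $A[\Upsilon']$-basis of this tensor product, the vanishing decouples into the separate equations $a_\chi (t^{\psi\chi} - 1) = 0$ in $A[\Upsilon']$, one for each $\chi \in S$.

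Next, I would use that $A[\Upsilon']$ is free as an $A$-module with basis $\{t^\tau\}_{\tau \in \Upsilon'}$, so $t^{\psi\chi}$ and $t^0 = 1$ are distinct basis vectors whenever $\psi\chi \neq 0$. Hence the equation $a_\chi (t^{\psi\chi} - 1) = 0$ forces $a_\chi = 0$ whenever $\psi\chi \neq 0$, and it is automatically satisfied when $\psi\chi = 0$. Therefore $x$ is a co-invariant if and only if its support is contained in $S \cap \ker\psi$.

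Finally, I would invoke exactness $\ker\psi = \phi(\Upsilon'')$ together with the injectivity of $\phi$ to identify $\phi^{-1}S$ bijectively (as sub-monoids, via $\tau \mapsto \phi\tau$) with $S \cap \phi(\Upsilon'') = S \cap \ker\psi$. Under this identification $A[\phi^{-1}S]$ becomes the $A$-submodule of $A[S]$ spanned by $\{t^\chi : \chi \in S \cap \ker\psi\}$, which is exactly the co-invariant subring just described. I do not expect any genuine obstacle in carrying this out: the whole argument is a bookkeeping exercise, and the only points requiring care are the use of freeness of the monoid algebras to compare coefficients term by term, and the use of injectivity of $\phi$ to realize $A[\phi^{-1}S]$ unambiguously as a subring of $A[S]$.
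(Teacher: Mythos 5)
Your proof is correct and follows essentially the same route as the paper: both arguments compare coefficients in the free module $A[\Upsilon']\otimes_A A[S]$ (the paper uses the linear independence of the $t^{(\chi',\chi)}$ over $A$ directly, while you decouple into equations $a_\chi(t^{\psi\chi}-1)=0$ in $A[\Upsilon']$, which amounts to the same thing) and then invoke $\ker\psi=\operatorname{im}\phi$ with $\phi$ injective to identify the co-invariants with $A[\phi^{-1}S]$. No gaps.
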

\begin{proof}
	By definition, $A[S]^{A[\Upsilon']} = \{ f \in A[S] \mid c(f) = 1 \otimes f \}$. As a co-action, $c$ is compatible with multiplication in $A[S]$, thus, $A[S]^{A[\Upsilon']}$ is a subring of $A[S]$. As every $t^{\phi(\chi)}$ is $A[\Upsilon']$-co-invariant, we obtain that $A[\phi^{-1}S]$ is contained in $A[S]^{A[\Upsilon']}$. On the other hand, let $f = \sum_{\chi \in S} f_\chi t^\chi \in A[S]$ be $A[\Upsilon']$-co-invariant. That means
	$$
		\sum_{\chi \in S} f_\chi t^{(\psi \chi,\chi)} = c(f) = 1 \otimes f = \sum_{\chi \in S} f_\chi t^{(0,\chi)}.
	$$
	As the elements $t^{(\chi',\chi)}$ with $\chi' \in \Upsilon'$ and $\chi \in S$ are linearly independent, we obtain that $\psi \chi = 0$ for all $\chi \in S$ with $f_\chi \neq 0$. But then $\chi \in \im \phi$, and therefore, $\chi$ possesses an inverse image in $\phi^{-1}S$.
\end{proof}
\begin{prop} \label{toric_fans}
	We obtain $\Delta^+ = \{ \sigma^+_J \mid J \in \Phi_\theta \}$ and $\Delta^- = \{ \sigma_J^- \mid J \in \Phi_\theta \}$.
\end{prop}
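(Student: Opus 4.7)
The plan is to handle $\Delta^+$ and $\Delta^-$ separately, using that $R^\theta$ is an open toric subvariety of $R \cong \mathbb{A}^{Q_1}$ and that $M^\theta = R^\theta/PT$.

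For $\Delta^+$, I would start from the standard toric description of $R = \bigoplus_\alpha R_\alpha$: as a $T^+$-toric variety its fan consists of all faces $\sigma^+_J$ of the positive orthant $\sigma^+_{Q_1}$, and the orbit-cone correspondence attaches to $\sigma^+_J$ the orbit $\{M \in R \mid M_\alpha = 0 \Leftrightarrow \alpha \in J\}$, i.e., the locus of $M$ with $\Supp(M) = J^c$. Since $R^\theta$ is open and $T^+$-stable in $R$, its fan $\Delta^+$ is the subfan consisting of those $\sigma^+_J$ whose orbit lies in $R^\theta$; by Lemma \ref{lemRepSubset} this is equivalent to $J^c$ being $\theta$-(semi-)stable, i.e., $J \in \Phi_\theta$. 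Downward closure under faces is automatic because $\Phi_\theta$ is a simplicial complex, giving $\Delta^+ = \{\sigma^+_J \mid J \in \Phi_\theta\}$.

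For $\Delta^-$, I would argue affine-locally on each $T^+$-invariant open $X_{\sigma^+_J} \subset R^\theta$ for $J \in \Phi_\theta$; its coordinate ring is $\kk[S_J]$ with $S_J := (\sigma^+_J)^\vee \cap \Upsilon^+$, and since $\Gamma$ acts trivially the $PT$-invariants in $\kk[S_J]$ coincide with the $T$-invariants for the co-action induced by $r: T \to T^+$. Applying Lemma \ref{hopf} to the exact sequence $0 \to \Upsilon^- \xrightarrow{s^*} \Upsilon^+ \xrightarrow{r^*} \Upsilon$, these invariants equal $\kk[S_J \cap s^*\Upsilon^-] = \kk[(\sigma^-_J)^\vee \cap \Upsilon^-]$ (after identifying $\Upsilon^-$ with its image under $s^*$), the coordinate ring of the affine toric variety $X_{\sigma^-_J}$. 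Hence $X_{\sigma^+_J}/PT \cong X_{\sigma^-_J}$ as $T^-$-toric varieties.

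Finally, to conclude that these affine pieces glue into a toric variety whose fan is $\{\sigma^-_J \mid J \in \Phi_\theta\}$, I would invoke Lemma \ref{simplex}: part (i) guarantees that each $\sigma^-_J$ is a simplex of dimension $\sharp J$, so its faces are the $\sigma^-_{J'}$ for $J' \subset J$ (still in $\Phi_\theta$ by simpliciality); part (ii), $\sigma^-_{J_1} \cap \sigma^-_{J_2} = \sigma^-_{J_1 \cap J_2}$, furnishes the intersection property demanded of a fan and ensures that the local pieces patch consistently along $X_{\sigma^-_{J_1 \cap J_2}}$; and part (iii) rules out spurious identifications produced by non-stable subsets. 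I expect the main bookkeeping obstacle to be the lattice-level identification $\ker r^* = s^*\Upsilon^-$ that feeds Lemma \ref{hopf}, together with checking that the local affine quotients really assemble into the geometric quotient $M^\theta$; once that is in place, Lemma \ref{simplex} packages the gluing.
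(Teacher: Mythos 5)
Your proposal is correct and follows essentially the same route as the paper: the paper likewise identifies $\Delta^+$ by exhibiting $R^\theta$ as the union of the charts $X_{\sigma^+_J}$ for $J \in \Phi_\theta$, and obtains $\Delta^-$ by computing the $T$-invariants of each chart via Lemma \ref{hopf} and gluing the resulting local universal categorical quotients using Lemma \ref{simplex}(ii), concluding by uniqueness of categorical quotients since the geometric quotient $M^\theta$ is already known to exist. The only cosmetic differences are your use of the orbit--cone correspondence for $\Delta^+$ in place of the direct union-of-charts computation, and your appeal to Lemma \ref{simplex}(iii), which is not actually needed at this stage (the paper uses it only later, in the proof of Proposition \ref{thm_toric}).
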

\begin{proof}
	\begin{enumerate}
		\item Let $\Sigma^+ := \{ \sigma^+_J \mid J \in \Phi_\theta \}$ and let $X := X_{\Sigma^+}$ be the toric variety associated to $\Sigma^+$. Then $X$ is an open subset of $R$ which is defined as the union $X = \bigcup_{J \in \Phi_\theta} X_{\sigma^+_J}$. By definition,
		$$
			X_{\sigma^+_J} = \Spec \kk[\Upsilon^+ \cap (\sigma^+_J)^\vee] = \Big\{ \sum_{\alpha \in Q_1} M_\alpha \in R \mid M_\alpha \neq 0 \text{ for all } \alpha \notin J \Big\},
		$$
		and thus, it is clear that $R^\theta = X$.
		\item Put $\Sigma^- := \{ \sigma_J^- \mid J \in \Phi_\theta \} = s_*{\Sigma^+}$. Let $Y := X_{\Sigma^-}$ be the associated toric variety. The homomorphism $s_*: \Lambda^+ \to \Lambda^-$ yields a morphism $\eta: R^\theta = X_{\Sigma^+} \to X_{\Sigma^-} = Y$ of toric varieties. This morphism is $T^+$-equivariant via $s: T^+ \to T^-$ and therefore, $T$-invariant with respect to the induced $T$-actions via $r: T \to T^+$. We know that a geometric $T$-quotient exists. Thus, it suffices to show that $Y$ is a categorical $T$-quotient.\\
		Let $f: R^\theta \to Z$ be a $T$-invariant morphism of varieties. Let $X_J := X_{\sigma^+_J}$. This is an affine open subset of $R$. Lemma \ref{hopf} shows that
		$$
			\kk[\Upsilon^+ \cap (\sigma^+_J)^\vee]^T = \kk[\Upsilon^- \cap (s^*)^{-1}((\sigma^+_J)^\vee)] = \kk[\Upsilon^- \cap (\sigma_J^-)^\vee],
		$$
		and therefore, $\eta_J: X_J \to Y_J := X_{\sigma_J^-}$ is a \emph{universal} categorical quotient (cf. \cite[Theorem 1.1]{GIT:94}). Note that $Y_J$ is an open subset of $Y$ and $\eta_J$ coincides with the restriction of $\eta$. By the quotient property, we obtain that there exists a unique morphism $g_J: Y_J \to Z$ with $g_J \eta_J = f_J := f|X_J$. Applying Lemma \ref{simplex}(ii), we get $\sigma_{J_1}^- \cap \sigma_{J_2}^- = \sigma_{J_1 \cap J_2}^-$ for all $J_1,J_2 \in \Phi_\theta$. Let $J := J_1 \cap J_2$. For $\nu = 1,2$ we have
		$$
			g_J\eta_J = f_J = f_{J_\nu}|X_{J_\nu} = g_{J_\nu}\eta_{J_\nu}|X_J = \left(g_{J_\nu}|Y_J\right) \eta_J
		$$
		because $\eta_{J_\nu}$ is a universal categorical quotient. As $g_J$ is uniquely determined by the property $g_J \eta_J = f_J$, we obtain $g_{J_\nu}|Y_J = g_J$. This proves that the maps $g_J$ with $J \in \Phi_\theta$ glue together to a map $g: Y \to Z$ with $g\eta = f$. Conversely, every such map $g$ has to fulfill $g|Y_J = g_J$. \qedhere
	\end{enumerate}
\end{proof}

We have obtained an explicit description of the fan of $M^\theta$. This enables us to prove Proposition \ref{thm_toric} with the help of Danilov's theorem (cf. \cite[Theorem 10.8]{Danilov}).

\begin{proof}[\normalfont \textit{Proof of Proposition \ref{thm_toric}}]
	Let $\sigma^-_1,\ldots,\sigma^-_r$ be the rays of the fan $\Delta^-$. By Lemma \ref{simplex} (i), we know that these come from arrows $\alpha_1,\ldots,\alpha_r$ with $\{\alpha_i\} \in \Phi_\theta$ and part (iv) of the same lemma tells us that $s_*\lambda_{\alpha_1},\ldots,s_*\lambda_{\alpha_r}$ are their minimal lattice points. Using that $\Delta^-$ is the toric fan of $M^\theta$, Danilov's theorem implies
	$$
		A(M^\theta) \cong \Q[x_{\alpha_1}, \ldots, x_{\alpha_r}]/(\rr_1 + \rr_2),
	$$
	where $\rr_1$ is the ideal generated by all monomials $x_{\alpha_{i_1}}\ldots x_{\alpha_{i_l}}$, with $J = \{\alpha_{i_1},\ldots,\alpha_{i_l}\}$ such that $\sigma_J^- \notin \Delta^-$, and $\rr_2$ is spanned by expressions $\sum_i \langle u, s_*\lambda_{\alpha_i} \rangle x_{\alpha_i}$, where $u$ runs through $\Upsilon^-$. The above isomorphism is given by sending $x_{\alpha_i}$ to the toric Weil divisor $D_{\alpha_i}$. Consider the homomorphism $\Sym_\Q(\Upsilon^+) = \Q[\chi_\alpha \mid \alpha \in Q_1] \to \Q[x_{\alpha_1},\ldots, x_{\alpha_r}]$ mapping $\chi_{\alpha_i}$ to $x_{\alpha_i}$ and $\chi_\alpha$ to $0$ if $\{\alpha\} \notin \Phi_\theta$. We write $S(T^+)$ instead of $\Sym_\Q(\Upsilon^+)$, for brevity. The inverse image of $\rr_1$ under this map is
	$$
		\left( \chi_{\alpha_{i_1}} \ldots \chi_{\alpha_{i_l}} \mid J := \{\alpha_{i_1}, \ldots, \alpha_{i_l}\} \text{ s.t. } \sigma^-_J \notin \Delta^- \right) + \Big( \chi_\alpha \mid \{\alpha\} \notin \Phi_\theta \Big) = \left( \prod\nolimits_{\alpha \in J} \chi_\alpha \mid J \notin \Phi_\theta \right)
	$$
	by virtue of Lemma \ref{simplex} (iii) and Proposition \ref{toric_fans}. In the quotient $\Q[\chi_\alpha \mid \alpha \in Q_1]/(\chi_\alpha \mid \{\alpha\} \notin \Phi_\theta)$, the expression $\sum_i \langle u, s_*\lambda_{\alpha_i} \rangle \chi_{\alpha_i}$ can be read as
	$$
		\sum_{\alpha \in Q_1} \langle u,s_*\lambda_\alpha \rangle \chi_\alpha = \sum_{\alpha \in Q_1} \langle s^*u, \lambda_\alpha \rangle \chi_\alpha = s^*u.
	$$
	Thus, we have shown that
	\begin{eqnarray*}
		\Q[x_{\alpha_1},\ldots,x_{\alpha_r}]/(\rr_1 + \rr_2) &\cong& S(T^+)/\left( \left( \prod\nolimits_{\alpha \in J} \chi_\alpha \mid J \notin \Phi_\theta \right) + S(T^+) \cdot \Upsilon^-  \right) \\
		&\cong& S(PT) / \left( \prod\nolimits_{\alpha \in J} r^*\chi_\alpha \mid J \notin \Phi_\theta \right),
	\end{eqnarray*}
	where $S(T^+)\cdot \Upsilon^-$ denotes the ideal of $S(T^+)$ which is generated by the image $s^*(\Upsilon^-)$. The map $\Z \to \Upsilon$ sending the generator $1$ to $\sum_i a_i \chi_i$ is a right inverse of the map $\Upsilon \to \Z$ (which is induced by the diagonal embedding $\G_m \to T$) that sends every $\chi_i$ to $1$. Remember that $a_i$ are integers such that $\sum_i a_i = 1$. This yields an identification of the character lattice of $PT$ with the quotient group
	$$
		\Upsilon/\left(\Z \cdot \sum\nolimits_i a_i \chi_i \right).
	$$
	Under this identification, the natural homomorphism $S(T^+) \to S(PT)$ is the map $\Q[\chi_\alpha \mid \alpha] \to \Q[\chi_i \mid i]/(\sum_i a_i\chi_i)$ that sends $\chi_\alpha$ to the coset of $\chi_j - \chi_i$ if $\alpha: i \to j$. The above calculation shows that we have an isomorphism
	$$
		A(M^\theta) \cong \Q[\chi_i \mid i \in Q_0]/\bb,
	$$
	where $\bb$ is the ideal generated by $\sum_i a_i \chi_i$ and by the terms $\prod_{\alpha \in J,\ \alpha: i \to j} (\chi_j - \chi_i)$ for all $J \notin \Phi_\theta$. This looks already pretty similar to the ideal $\aa$ in the theorem. We show that $\aa$ and $\bb$ are in fact the same after renaming the variables $\chi_i \mapsto t_i$. Both ideals contain $l = \sum_i a_i t_i$. Let $J \sub Q_1$ with $J \notin \Phi_{\theta}$. That means $J^c$ is not $\theta$-(semi-)stable and therefore, there exists a $\theta$-forbidden $I' \sub Q_0$ with $I' \in \SS(J^c)$ by Lemma \ref{lemRepSubset}. Thus, if $\alpha: i \to j$ is an arrow with $i \in I'$ and $j \notin I'$ then $\alpha \notin J^c$. This implies
	$$
		f^{I'} = \prod_{\alpha: i \to j,\ i \in I',\ j \notin I'} (t_j - t_i) \text{ divides } \prod_{\alpha: i \to j,\ \alpha \in J} (t_j - t_i)
	$$
	and thus $\bb \sub \aa$. Conversely, consider the element $f^{I'} \in \aa$ for a $\theta$-forbidden $I' \in Q_0$. If we define $J := \{ \alpha: i \to j \mid i \in I',\ j \notin I' \}$ then $I' \in \SS(J^c)$ and $f^{I'} = \prod_{\alpha: i \to j,\ \alpha \in J} (t_j - t_i)$, so $f^{I'} \in \bb$.
\end{proof}

\subsection{The general case}

Now, proceed to the general case. Let $\tilde{Q}$ be a covering quiver of $(Q,d)$ in the sense of Reineke \cite[7.1]{Reineke:08} and Weist \cite[3.1]{Weist:09} defined by
\begin{eqnarray*}
	\tilde{Q}_0 &=& \{ (i,\nu) \mid i \in Q_0,\ 1 \leq \nu \leq d_i \} \\
	\tilde{Q}_1 &=& \{ (\alpha,\mu,\nu) \mid (\alpha: i \to j) \in Q_1,\ 1 \leq \mu \leq d_i,\ 1 \leq \nu \leq d_j \},
\end{eqnarray*}
and for every $\beta := (\alpha,\mu,\nu) \in \tilde{Q}_1$ with $\alpha: i \to j$ in $Q$, the arrow $\beta$ starts at $(i,\mu)$ and ends at $(j,\nu)$. Note that this definition of the symbol $\tilde{Q}_0$ coincides with the one we have used before. We get $R = R(Q,d) \cong R(\tilde{Q},\one)$ and $T := G(\tilde{Q},\one)$ is isomorphic to a maximal torus of $G = G(Q,d)$ in such a way that the action of $T$ on $R$ coincides with the induced $T$-action by the $G$-action on $R$.

For $\tilde{a} \in \Z^{\tilde{Q}_0}$, let $s(\tilde{a})$ be the tuple $(a_i \mid i \in Q_0)$ of integers defined by $a_i := \sum_{\nu = 1}^{d_i} \tilde{a}_{(i,\nu)}$. Define
$$
	\tilde{\theta}(\tilde{a}) := \theta(s(\tilde{a})).
$$
This is a stability condition for $\tilde{Q}$. We analyze its properties.\\
A sub-dimension vector $I'$ of $\one$ is nothing but a subset $I' \sub \tilde{Q}_0$. For a subset $I'$ of $\tilde{Q}_0$, the tuple $d' := s(I')$ consists of the integers $d'_i$ defined as the number of $\nu \in \{1,\ldots,d_i\}$ with $(i,\nu) \in I'$. Therefore, $I'$ is forbidden for $\tilde{\theta}$ if and only if the induced sub-dimension vector $d'$ of $d$ is $\theta$-forbidden. Furthermore,
$$
	f^{I'}	= \prod_{\substack{(\alpha, \mu, \nu): (i,\mu) \to (j,\nu),\\ (i,\mu) \in I',\ (j,\nu) \notin I'}} (t_{j,\nu} - t_{i,\mu}) = \prod_{\alpha: i \to j}\ \prod_{\mu:\ (i,\mu) \in I'}\ \prod_{\nu:\ (j,\nu) \notin I'} (t_{j,\nu} - t_{i,\mu})
		= f^{d'}(t_{i,w_i(\nu)} \mid i,\nu) = wf^{d'}
$$
if we define $w \in W$ such that $w_i\{1,\ldots,d'_i\} = \{ \mu \mid (i,\mu) \in I'\}$. Remember $W = \prod_i S_{d_i}$ which is, by the way, the Weyl group of $T$ in $G$. It acts on $C$ by $w\, t_{i,\nu} = t_{i,w_i(\nu)}$. Conversely, every $wf^{d'}$ is of the form $f^{I'}$, where $I' = \{ (i,w_i(\nu)) \mid i \in Q_0,\ 1 \leq \nu \leq d'_i \}$. With these identifications, the first step of the proof shows that the natural map
$$
	C/\cc \to A(\tilde{Y})
$$
is an isomorphism, where $\tilde{Y} := M^{\tilde{\theta}}(\tilde{Q},\one)$ and $\cc$ is the ideal generated by $l$ and all elements $wf^{d'}$ for $\theta$-forbidden sub-dimension vectors $d' \leq d$ and $w \in W$.

\begin{defn*}
	Let $A$ be a ring, let $M$ be a free $A$-module of finite rank and let $M'$ be a sub-$A$-module of $M$. Choose a basis $\{ m_1,\ldots,m_n\}$ of $M$, and define
	$$
		\coeff_A(M',M) := p_1(M') + \ldots + p_n(M'),
	$$
	where $p_i: M \to A$ is the $A$-linear map defined by $p_i(\sum_j a_j m_j) = a_i$. We can easily see that $\coeff_A(M',M)$ is an ideal of $A$ that does not depend on the choice of a basis of $M$.
\end{defn*}

If $M' = A \cdot m$ with $m = \sum_i a_im_i$ then $\coeff_A(M',M)$ is the ideal of $A$ generated by $a_1,\ldots,a_n$. This explains the notation.

We get $\aa = \coeff_A(\cc,C)$ with the following argument: By definition, $\aa = Al + \sum_{d'} \coeff_A(f^{d'},C)$, where the sum runs over all $\theta$-forbidden sub-dimension vectors $d' \leq d$. Let $\BB = (y_\lambda \mid \lambda \in \Delta)$ be a basis of $C$ as an $A$-module and $f^{d'} = \sum_\lambda \tau_\lambda y_\lambda$. For every $w \in W$, we get
$$
	wf^{d'} = \sum_\lambda \tau_\lambda wy_\lambda
$$
and $(wy_\lambda \mid \lambda)$ is also a basis of $C$ over $A$. We obtain that $\coeff_A(wf^{d'},C) = \coeff_A(f^{d'},C)$ and therefore,
$$
	\coeff_A(\cc,C) = Al + \sum_{d'} \sum_{w \in W} \coeff_A(wf^{d'},C) = Al + \sum_{d'} \coeff_A(f^{d'},C) = \aa.
$$
Let us summarize the situation in a picture. With $Y := M^\theta(Q,d)$, we have a commutative diagram
\begin{center}
	\begin{tikzpicture}[description/.style={fill=white,inner sep=2pt}]
		\matrix(m)[matrix of math nodes, row sep=2em, column sep=2em, text height=1.5ex, text depth=0.25ex]
		{
			0 & \cc & C &[2em] \CC := A(\tilde{Y}) & 0 \\
			0 & \aa = \coeff_A(\cc,C) & A &[2em] \AA := A(Y) & 0 \\
		};
		\path[->, font=\scriptsize]
		(m-1-1) edge (m-1-2)
		(m-1-2) edge (m-1-3)
		(m-1-3) edge (m-1-4)
		(m-1-4) edge (m-1-5)
		(m-2-1) edge (m-2-2)
		(m-2-2) edge (m-2-3)
		(m-2-3) edge (m-2-4)
		(m-2-4) edge (m-2-5)
		(m-2-3) edge (m-1-3)
		(m-2-2) edge (m-1-2);
	\end{tikzpicture}
\end{center}
and we know that the upper row is exact. In order to prove the exactness of the lower row, we have remarked earlier that it suffices to check that $A/\aa \to \AA$ is injective.

Define the discriminant of $C$ by
$$
	\Delta = \prod_{i \in Q_0}\ \prod_{1 \leq \mu < \nu \leq d_i} (t_{i,\nu} - t_{i,\mu}).
$$
This is an anti-invariant, i.e. $W$ acts on $\Delta$ by $w\Delta = \left( \prod_i \sign(w_i)\right) \Delta$. It is a basic fact that every anti-invariant $y$ of $C$ is of the form $y = a\Delta$ for some $a \in A$ (see for example \cite[Chapter V, \S 4, Proposition 5]{Bourbaki:Lie_4-6}). Furthermore, define an $A$-linear map $p: C \to A$, called the symmetrization map, by
$$
	p(f) = \Delta^{-1}\sum_{w \in W} \sign(w) wf.
$$
Note that $p(\Delta) = \sharp W$ regarded as an element of $A$. We show that $\aa$ contains the image $p(\cc)$. Let $f \in \cc$. Display $f$ as a linear combination $f = \sum_i a_iy_i$ with respect to a basis $y_1,\ldots,y_n$ of $C$ as an $A$-module. As $p$ is $A$-linear, we get $p(f) = \sum_i a_ip(y_i)$ which lies in $\coeff_A(\cc,C) = \aa$ as every $a_i$ is an element of $\coeff_A(\cc,C)$.

To prove Theorem \ref{thm}, it then suffices to show that the induced map $A/p(\cc) \to \AA$ is injective. A couple of lemmas will do the trick.

\begin{lem} \label{anti_invar}
	There is a natural isomorphism $\CC^\mathrm{a} \xto{}{\cong} A/p(\cc)$ of $\Q W$-modules decreasing degrees by $\delta := \deg \Delta$.
\end{lem}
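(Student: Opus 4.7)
The plan is to use the classical fact that, for the polynomial ring $C = \Q[t_{i,\nu}]$ with its permutation action of $W = \prod_i S_{d_i}$, the $\Q$-module $C^{\mathrm{a}}$ of $W$-anti-invariants is free of rank one over $A = C^W$, generated by the discriminant $\Delta$ (the Bourbaki result already invoked in the excerpt). Thus multiplication by $\Delta$ gives a $\Q$-linear bijection $A \to C^{\mathrm{a}}$, $a \mapsto \Delta a$, shifting degrees upward by $\delta = \deg \Delta$. The goal is to descend the inverse of this bijection to an isomorphism $\CC^{\mathrm{a}} \to A/p(\cc)$.

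A preliminary point is that $\cc$ is $W$-stable, being generated by the $W$-invariant element $l$ together with the $W$-orbit of the forbidden polynomials $f^{d'}$; hence $\CC = C/\cc$ carries an induced $W$-action and $\CC^{\mathrm{a}}$ is well-defined. Any lift $y \in C$ of a class $\bar y \in \CC^{\mathrm{a}}$ may be replaced by its antisymmetrization $|W|^{-1}\sum_{w \in W} \mathrm{sgn}(w)\, wy$, which is anti-invariant in $C$ and (by $W$-stability of $\cc$) still projects to $\bar y$. Combined with the first paragraph, this shows that every element of $\CC^{\mathrm{a}}$ has the form $\overline{\Delta a}$ for some $a \in A$, and I would define the candidate map by $\overline{\Delta a} \mapsto \bar a \in A/p(\cc)$.

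The well-definedness, injectivity, and surjectivity of this map all reduce to the single identity
$$
p(\Delta a) \;=\; \Delta^{-1}\sum_{w \in W} \mathrm{sgn}(w)\, w(\Delta a) \;=\; \Delta^{-1} \sum_{w \in W} \mathrm{sgn}(w)^2\, \Delta\, a \;=\; |W|\, a,
$$
valid for every $a \in A$. Well-definedness: if $\Delta(a - a') \in \cc$, then $|W|(a - a') = p(\Delta(a-a')) \in p(\cc)$, and $a \equiv a' \pmod{p(\cc)}$ since $|W| \in \Q^\times$. Injectivity: if $a = p(f)$ with $f \in \cc$, then $\Delta a = \sum_{w} \mathrm{sgn}(w)\, wf \in \cc$ by the $W$-stability of $\cc$, whence $\overline{\Delta a} = 0$ in $\CC^{\mathrm{a}}$. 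Surjectivity is built into the construction. The decrease of degrees by $\delta$ is visible from $\deg(\Delta a) = \delta + \deg a$, and the $\Q W$-equivariance is automatic once one observes that the correspondence $\Delta a \leftrightarrow a$ intertwines the sign-character action of $W$ on $C^{\mathrm{a}}$ with the trivial action on $A$ (equivalently, the isomorphism is $\Q W$-linear after twisting one side by the sign representation).

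The main obstacle is the opening calculation $p(\Delta a) = |W|\, a$ together with the bookkeeping of lifts; once this identity is in hand, the four verifications are essentially formal, and parallel the analogous step in the proof of \cite[Theorem 4.4]{ES} that the author is following.
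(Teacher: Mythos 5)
Your proof is correct and follows essentially the same route as the paper's: both rest on the Bourbaki identification $C^{\mathrm{a}} = A\cdot\Delta$ and the computation $p(a\Delta) = \sharp W\cdot a$, your explicit antisymmetrization of lifts being just the concrete form of the paper's appeal to exactness of the anti-invariants functor, and your well-definedness and injectivity checks being the two inclusions of the paper's identity $A\cdot\Delta\cap\cc = p(\cc)\cdot\Delta$. Your remark that the isomorphism is $W$-equivariant only after twisting by the sign character is a fair (and slightly more careful) reading of the statement.
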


In the lemma, $\CC^\mathrm{a}$ denotes the anti-invariant part of $\CC$ as a $\Q W$-module. As the action of $W$ is compatible with the grading of $\CC = \bigoplus_{i \geq 0} \CC_i$, the anti-invariant part is a graded subspace of $\CC$, i.e. $\CC^\mathrm{a} = \bigoplus_{i \geq 0} \CC_i^\mathrm{a}$.

\begin{proof}
	The composition $C \xto{}{p} A \to A/p(\cc)$ induces an onto map $\bar{p}: \CC \cong C/\cc \to A/p(\cc)$. As taking anti-invariants is an exact functor of $\Q W$-modules, we obtain
	$$
		\CC^\mathrm{a} \cong (C/\cc)^\mathrm{a} = C^\mathrm{a}/\cc^{\mathrm{a}} = A\cdot\Delta/(A\cdot\Delta \cap \cc).
	$$
	But $A\cdot\Delta \cap \cc = p(\cc)\cdot\Delta$ because of the following: On the one hand, if $f \in \cc$ is of the form $f = a\Delta$ for some $a \in A$ then $a = p(\frac{1}{\sharp W}a\Delta) \in p(\cc)$. On the other hand, let $f = p(y)\Delta$ for a $y \in \cc$. As the ideal $\cc$ is $W$-invariant, we get $wy \in \cc$ for every $w$. This implies $p(y) \in p(\cc)$ and thus, $f \in \cc$. We deduce
	$$
		\CC^\mathrm{a} \cong A\cdot\Delta/(p(\cc)\cdot\Delta) \cong A/p(\cc)
	$$
	as $\Q W$-modules. It is clear that this isomorphism decreases degrees by $\delta$.
\end{proof}

\begin{lem} \label{degrees}
	For $i < \delta$ or $i > \dim \tilde{Y} - \delta$, we have $\CC_i^\mathrm{a} = 0$.
\end{lem}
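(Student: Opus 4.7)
The plan is to prove the two vanishing statements separately and then link them by Poincar\'e duality on the smooth projective toric variety $\tilde Y$.

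For the lower range $i<\delta$: Since the base field has characteristic zero, the projection onto the sign-isotypic component (i.e.\ onto the anti-invariant part) is an exact functor on $\Q W$-modules. The ideal $\cc$ is $W$-stable: the linear relation $l=\sum_i a_i x_{i,1}$ is $W$-fixed (the $x_{i,\nu}$ lie in the invariant subring $A$), and the other generators $wf^{d'}$ already form a $W$-orbit by construction. Hence $\CC=C/\cc$ is a graded $\Q W$-module and $\CC^{\mathrm a}=C^{\mathrm a}/\cc^{\mathrm a}$. The classical description of anti-invariants used in the proof of Lemma \ref{anti_invar} gives $C^{\mathrm a}=A\cdot\Delta$, which is concentrated in degrees $\geq\delta$. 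Therefore $\CC_i^{\mathrm a}=0$ for every $i<\delta$.

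For the upper range $i>\dim\tilde Y-\delta$: Set $n:=\dim\tilde Y$. Since $\tilde Q$ is acyclic and $\one$ is (trivially) $\tilde\theta$-coprime, $\tilde Y=M^{\tilde\theta}(\tilde Q,\one)$ is smooth and projective, so $\CC$ satisfies Poincar\'e duality: the top pairing
$$
    \CC_i\otimes_\Q\CC_{n-i}\longrightarrow\CC_n\cong\Q
$$
is perfect. The Weyl group $W=\prod_i S_{d_i}$ acts on $R(\tilde Q,\one)=R(Q,d)$ by permuting the coordinates of the covering, commuting with the $T$-action, and preserving the stability locus because $\tilde\theta=\theta\circ s$ is $W$-invariant. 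Thus $W$ descends to a group of algebraic automorphisms of $\tilde Y$, which in turn act on $\CC$ by graded $\Q$-algebra automorphisms. Since $\tilde Y$ is connected and the $W$-action preserves the class of a closed point, it acts trivially on $\CC_n\cong\Q$; consequently the Poincar\'e pairing is $W$-invariant and induces a $\Q W$-isomorphism $\CC_i\cong\CC_{n-i}^\vee$. Because the sign representation of $W$ is self-dual, taking anti-invariants yields a perfect pairing
$$
    \CC_i^{\mathrm a}\otimes_\Q\CC_{n-i}^{\mathrm a}\longrightarrow\Q.
$$
If $i>n-\delta$ then $n-i<\delta$, and the first part gives $\CC_{n-i}^{\mathrm a}=0$, so perfectness forces $\CC_i^{\mathrm a}=0$.

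The main subtle point is verifying that $W$ indeed acts on $\tilde Y$ by algebraic automorphisms fixing the fundamental class; granted that, both bounds reduce to the well-known fact $C^{\mathrm a}=A\cdot\Delta$ together with exactness of the sign-isotypic projection in characteristic zero.
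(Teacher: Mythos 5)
Your argument is correct and follows exactly the paper's own proof: the lower bound comes from the fact that $\CC^{\mathrm a}$ is the image of $C^{\mathrm a}=A\cdot\Delta$ (via exactness of the sign-isotypic projection), and the upper bound from the $W$-equivariance of the Poincar\'e pairing on $\tilde Y$ together with the compatibility of anti-invariants with duals. The extra detail you supply --- that $W$ genuinely acts on $\tilde Y$ by automorphisms fixing the fundamental class --- is just an expansion of the paper's one-line remark that the $W$-action on the Chow ring comes from the $W$-action on the moduli space.
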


\begin{proof}
	As every anti-invariant of $\CC$ is divisible by $\Delta$, there can be no anti-invariant of degree less than $\delta = \deg \Delta$.\\
	By Poincar\'{e} duality, the multiplication on $\CC$ induces a perfect pairing $\CC_i \otimes_\Q \CC_{l - i} \to \CC_l = \Q$ for every $i$, where $l := \dim \tilde{Y}$. As the $W$-action on the Chow ring comes from the $W$-action on the moduli space, this pairing is also $W$-equivariant. Therefore, we obtain
	$$
		\CC_i^\mathrm{a} \cong (\CC_{l-i}^\vee)^\mathrm{a} \cong (\CC_{l-i}^\mathrm{a})^\vee
	$$
	as taking anti-invariants commutes with taking duals. If $i > l - \delta$ then $l - i < \delta$, so the assertion is proved.
\end{proof}

Combining the preceding two lemmas, we obtain that $(A/p(\cc))_i = 0$ if $i < 0$ or $i > r := l -2\delta$. Note also that $r$ is precisely the dimension of $Y$, because $2\delta + \dim T = \dim G$.

\begin{lem} \label{perfPair}
	We have $(A/p(\cc))_r \cong \Q$ and the ring multiplication induces a perfect pairing $$(A/p(\cc))_i \otimes_\Q (A/p(\cc))_{r-i} \to (A/p(\cc))_r \cong \Q.$$
\end{lem}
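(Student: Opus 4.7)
The plan is to transport the statement via Lemma \ref{anti_invar} to a statement about $\CC^{\mathrm{a}}$, and deduce it from Poincar\'{e} duality on the non-singular projective variety $\tilde{Y}$.

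First I would observe that $\CC^{\mathrm{a}}_\delta \cong \Q \cdot \bar{\Delta}$. Indeed, in the polynomial ring $C$ every anti-invariant of degree $\delta$ is a scalar multiple of $\Delta$, so $\CC^{\mathrm{a}}_\delta$ is either zero or $\Q\bar{\Delta}$; it is nonzero because, under the isomorphism of Lemma \ref{anti_invar}, it corresponds to $(A/p(\cc))_0$, and $A/p(\cc) \neq 0$ since it surjects onto the nonzero ring $\AA = A(Y)$ via $\Phi$.

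Next I would invoke Poincar\'{e} duality on the non-singular projective toric variety $\tilde{Y}$: multiplication gives a perfect pairing $\CC_i \otimes \CC_{l-i} \to \CC_l \cong \Q$. As in the proof of Lemma \ref{degrees}, this pairing is $W$-equivariant, and $W$ acts trivially on $\CC_l$ because any automorphism of a connected non-singular projective variety preserves the degree map. The standard argument with isotypic decompositions, together with the self-duality of the sign representation of $W$, then yields a perfect pairing $\CC^{\mathrm{a}}_i \otimes \CC^{\mathrm{a}}_{l-i} \to \CC_l$ on the anti-invariants. Taking $i = \delta$ and using $\CC^{\mathrm{a}}_\delta \cong \Q$ gives $\CC^{\mathrm{a}}_{l-\delta} \cong \Q$ and shows that multiplication by $\bar{\Delta}$ is an isomorphism $\cdot\bar{\Delta} : \CC^{\mathrm{a}}_{l-\delta} \xrightarrow{\cong} \CC_l$. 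Through Lemma \ref{anti_invar}, this translates into $(A/p(\cc))_r \cong \Q$.

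For the pairing on $A/p(\cc)$, the iso $a \mapsto \overline{a\Delta}$ of Lemma \ref{anti_invar} identifies multiplication in $A/p(\cc)$ with the map $(\overline{a\Delta}, \overline{b\Delta}) \mapsto \overline{ab\Delta}$ on $\CC^{\mathrm{a}}$. Post-composing with $\cdot\bar{\Delta}$ yields $\overline{ab\Delta^2} = (\overline{a\Delta})(\overline{b\Delta})$, which is precisely the Poincar\'{e} pairing restricted to anti-invariants. Since that pairing is perfect and $\cdot\bar{\Delta}$ is an isomorphism, the pairing on $A/p(\cc)$ is perfect. The main obstacle I expect is the bookkeeping around the two-versus-one-$\Delta$ discrepancy: the Poincar\'{e} pairing naturally involves $\bar{\Delta}^2$, whereas the multiplication on $A/p(\cc)$ corresponds to a single factor of $\bar{\Delta}$, and the factorization through the isomorphism $\cdot\bar{\Delta} : \CC^{\mathrm{a}}_{l-\delta} \xrightarrow{\cong} \CC_l$ is what reconciles the two.
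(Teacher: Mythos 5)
Your proposal is correct and takes essentially the same route as the paper: both transport the statement through Lemma \ref{anti_invar} and reduce perfectness to the $W$-equivariant Poincar\'e pairing on $\CC = A(\tilde{Y})$ restricted to anti-invariants, with the one-$\Delta$-versus-$\Delta^2$ discrepancy absorbed by the isomorphism $\CC^{\mathrm{a}}_{l-\delta} \cong \CC_l$. The paper carries this out element-wise via the symmetrization map $p$ (computing $p(\dot{y})p(\dot{y}') = \sharp W \cdot p(aa'\Delta)$ and noting $p$ carries $\cc^{\mathrm{a}}$ onto $p(\cc)$), whereas you identify the two bilinear forms directly; the content is the same.
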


\begin{proof}
	We know that
	$$
		(A/p(\cc))_r = (A/p(\cc))_{l - 2\delta} \cong \CC_{l-\delta}^\mathrm{a} \cong (\CC_{\delta}^\mathrm{a})^\vee \cong (A/p(\cc))_0^\vee \cong \Q
	$$
	because $\Delta \notin \cc$. We show that the pairing $(A/p(\cc))_i \otimes_\Q (A/p(\cc))_{r-i} \to \Q$ is perfect. Let $x \in (A/p(\cc))_i$ with $x \neq 0$. There is a unique alternating $y \in \CC_{i+\delta}$ with $\bar{p}(y) = x$. As the pairing $\CC_{i+\delta} \otimes_\Q \CC_{l-i-\delta} \to \Q$ is perfect and $W$-equivariant, we obtain that there exists an \emph{alternating} $y' \in \CC_{l-i-\delta}$ with $yy' \neq 0$. Choose representatives $\dot{y} \in C_{i+\delta}^\mathrm{a}$ and $\dot{y}' \in C_{l-i-\delta}^\mathrm{a}$ of $y$ and $y'$ and define $\dot{x} := p(\dot{y})$ and $\dot{x}' := p(\dot{y}')$. There are unique $a,a' \in A$ such that $a\Delta = \dot{y}$ and $a'\Delta = \dot{y}'$. We get
	$$
		\dot{x}\dot{x}' = p(\dot{y})p(\dot{y}') = (\sharp W)^2\cdot aa'  = \sharp W \cdot p(aa'\Delta).
	$$
	As $yy'$ is non-zero, the anti-invariant $aa'\Delta$ is not contained in $\cc^\mathrm{a}$. Hence, $p(aa'\Delta) \notin p(\cc)$ because the isomorphism $p: C^\mathrm{a} \to A$ maps $\cc^\mathrm{a}$ onto $p(\cc)$. Therefore, $xx' \neq 0$.
\end{proof}

Finally, we are able to show that the map $A/p(\cc) \to \AA$ is injective.

\begin{proof}[\normalfont \textit{Proof of Theorem \ref{thm}.}]
	Call this map $\beta$. We have seen in the above lemmas that $(A/p(\cc))_i$ vanishes for $i > r = \dim Y$. In addition, $\beta$ maps the degree $r$ part of $A/p(\cc)$ isomorphically onto $\AA_r \cong \Q$. Now let $x \in A/p(\cc)$ with $x \neq 0$. Without loss of generality, we assume $x$ is homogeneous of degree $i$. By Lemma \ref{perfPair}, there exists $x' \in (A/p(\cc))_{r-i}$ with $xx' \neq 0$. Then $0 \neq \beta(xx') = \beta(x)\beta(x')$, which implies $\beta(x) \neq 0$.
\end{proof}
	\section{Examples}
We now turn to some classes of quiver settings where we can compute the Chow ring more explicitly.
\subsection{The canonical stability condition and the bipartite case}

Let $\theta$ be the stability condition of $Q$ which comes from the character $G \xto{}{\mathrm{act}} \Gl(R) \xto{}{\det} \G_m$. Note that $\theta$ depends on $d$. It is of the form
$$
	\theta(a) = \sum_{i \in Q_0} a_i \cdot \left( \sum_{\alpha: j \to i} d_j - \sum_{\alpha:i \to k} d_k \right) = \langle a,d \rangle - \langle d,a \rangle,
$$
where $\langle \cdot,\cdot \rangle$ denotes the Euler form of the quiver $Q$. This is a reasonable stability condition in many examples. In fact, the examples in the following section will all use this stability condition. It is called the \textbf{canonical stability condition} of $(Q,d)$. It is evident that $\theta(d) = 0$. Anyway, $d$ is not necessarily $\theta$-coprime for this choice of $\theta$. Let us assume it is.

We define a partial ordering on the set of dimension vectors of $Q$. We write $d' \leqslant d''$ if $d'_i \leq d''_i$ for every source $i$ of $Q$, $d'_j \geq d''_j$ if $j$ is a sink of $Q$, and $d'_k = d''_k$ for every vertex $k$ of $Q$ which is neither a source nor a sink.\\
For two sub-dimension vectors $d'$ and $d''$ with $d' \leqslant d''$, the polynomial $f^{d'}$ divides $f^{d''}$. A simple calculation shows that in this case $\coeff_A(f^{d'},C)$ contains $\coeff_A(f^{d''},C)$.\\
Let $d'$ and $d''$ be two sub-dimension vectors of $d$ with $d' \leqslant d''$. Then,
\begin{eqnarray*}
	\theta(d'') - \theta(d') &=& \sum_i (d''_i - d'_i) \left( \sum_{\alpha: j \to i} d_j - \sum_{\alpha:i \to k} d_k  \right) \\
	&=& \sum_{i \text{ source}} (d'_i - d''_i) \sum_{\alpha:i \to k} d_k + \sum_{i \text{ sink}} (d''_i - d'_i) \sum_{\alpha: j \to i} d_j\quad \leq\quad 0,
\end{eqnarray*}
and thus $\theta(d') \geq \theta(d'')$. So, if $d'$ is $\theta$-forbidden, $d''$ will be ``even more forbidden''. This shows that we can restrict ourselves to the \emph{minimal} $\theta$-forbidden sub-dimension vectors $d'$ of $d$. This substantially reduces the computational effort, in particular if $Q$ is bipartite.

Let $Q$ be a bipartite quiver, let $d$ be a coprime dimension vector and let $\theta$ be the canonical stability condition. Assume that $d$ is $\theta$-coprime. Let $A$ be as usual. Under these circumstances, Theorem \ref{thm} reads like this:

\begin{cor} \label{remMinimal}
	The Chow ring $A(M^\theta)$ is isomorphic to $A/\aa$, where $\aa$ is generated by the linear relation $l$ and the tautological relations $\tau_\lambda(d')$. Here, $\lambda \in \Delta$ and $d'$ runs through all \emph{minimal} $\theta$-forbidden sub-dimension vectors of $d$.
\end{cor}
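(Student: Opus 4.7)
The plan is to deduce this from Theorem \ref{thm} by showing that the ideal $\aa$ in the statement (generated by $l$ together with $\tau_\lambda(d')$ only for \emph{minimal} $\theta$-forbidden $d'$) already coincides with the larger ideal $\aa_{\mathrm{full}} := Al + \sum_{d' \text{ $\theta$-forbidden}} \coeff_A(f^{d'}, C)$ appearing in Theorem \ref{thm}. Since that theorem gives $A(M^\theta) \cong A/\aa_{\mathrm{full}}$, the corollary reduces to the ideal equality $\aa = \aa_{\mathrm{full}}$. The inclusion $\aa \subseteq \aa_{\mathrm{full}}$ is immediate, so all the content lies in $\aa_{\mathrm{full}} \subseteq \aa$.

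The main input is the discussion preceding the statement of the corollary. In the bipartite setting, every vertex is a source or a sink, so whenever $d' \leqslant d''$, an arrow-by-arrow comparison of index ranges in the definitions of $f^{d'}_\alpha$ and $f^{d''}_\alpha$ shows that $f^{d'}$ divides $f^{d''}$ in $C$: for $\alpha: i \to j$ the inequalities $d'_i \leq d''_i$ (at the source) and $d'_j \geq d''_j$ (at the sink) are exactly what is needed to turn $\{1,\dots,d'_i\} \times \{d'_j+1,\dots,d_j\}$ into a subset of $\{1,\dots,d''_i\} \times \{d''_j+1,\dots,d_j\}$. Writing $f^{d''} = h\, f^{d'}$ and expanding $f^{d'}$ against a fixed $A$-basis of $C$ then expresses every basis-coefficient of $f^{d''}$ as an $A$-linear combination of basis-coefficients of $f^{d'}$, which gives the coefficient-ideal inclusion $\coeff_A(f^{d''},C) \subseteq \coeff_A(f^{d'},C)$.

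The remaining step is a finiteness argument: every $\theta$-forbidden $d''$ admits a $\leqslant$-minimal $\theta$-forbidden $d'$ with $d' \leqslant d''$. Indeed, the set $S := \{d : d \leqslant d'',\ d \text{ is $\theta$-forbidden}\}$ is nonempty (it contains $d''$) and finite, so it has a $\leqslant$-minimal element $d'$; any $\theta$-forbidden $d'''$ with $d''' \leqslant d'$ automatically satisfies $d''' \leqslant d''$ and hence lies in $S$, so minimality in $S$ forces $d''' = d'$, meaning $d'$ is globally $\leqslant$-minimal among $\theta$-forbidden sub-dimension vectors. Combining this with the previous step yields $\coeff_A(f^{d''},C) \subseteq \coeff_A(f^{d'},C) \subseteq \aa$, and summing over all $\theta$-forbidden $d''$ gives $\aa_{\mathrm{full}} \subseteq \aa$.

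I expect no real obstacle: the substantive work has already been carried out in Theorem \ref{thm}, and the corollary is essentially a monotonicity-plus-finiteness bookkeeping exercise built on the divisibility $f^{d'} \mid f^{d''}$ that the bipartite hypothesis makes transparent. The only mildly delicate point is the basis-expansion step underlying the coefficient-ideal containment, but it is independent of any geometric input and is routine once written out.
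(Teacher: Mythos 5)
Your proposal is correct and follows essentially the same route as the paper: the corollary is justified there by the discussion immediately preceding it, namely that $d'\leqslant d''$ implies $f^{d'}$ divides $f^{d''}$ and hence $\coeff_A(f^{d''},C)\subseteq\coeff_A(f^{d'},C)$, so only minimal $\theta$-forbidden sub-dimension vectors contribute generators. You merely write out the details the paper leaves as ``a simple calculation'' (the basis-expansion step for the coefficient-ideal containment and the finiteness argument producing a minimal forbidden $d'$ below any forbidden $d''$), all of which are sound.
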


\subsection{Subspace quivers}
Let $(Q,d)$ be the following quiver setting
\begin{center}
	\begin{tikzpicture}[description/.style={fill=white,inner sep=2pt}]
	\matrix(m)[matrix of math nodes, row sep=2em, column sep=1.5em, text height=1.5ex, text depth=0.25ex]
	{
			&		& \bullet	& \\
		\bullet	& \bullet	& \ldots	& \bullet \\
	};
	\node[above] at (m-1-3.north) {$2$};
	\node[below] at (m-2-1.south) {$1$};
	\node[below] at (m-2-2.south) {$1$};
	\node[below] at (m-2-4.south) {$1$};
	\path[->, font=\scriptsize]
	(m-2-1) edge (m-1-3)
	(m-2-2) edge (m-1-3)
	(m-2-4) edge (m-1-3);
\end{tikzpicture}
\end{center}
with sources $q_1,\ldots,q_m$, where $m = 2r+1$ is an odd number, and sink $s$. Let the stability condition $\theta$ be defined by
$$
	\theta(a) = ma_s - 2(a_{q_1} + \ldots + a_{q_m})
$$
for $a \in \Z^{Q_0}$. This is the canonical stability condition for $(Q,d)$ described above. As $m$ is an odd number, it is immediate that $d$ is $\theta$-coprime.

The moduli space $M^\theta := M^\theta(Q,d)$ can easily be identified with the space of $m$ ordered points on the projective line, of which no more than $r$ coincide, up to the natural $\PGl_2$-action.

We want to calculate the ring $A(M^\theta) = A/\aa$ from Theorem \ref{thm}. We have $C = \Q[x_1,\ldots,x_m,y_1,y_2]$ if we rename $t_{q_i,1} =: x_i$ and $t_{s,j} =: y_j$, for convenience. Then, $A$ is the subring $\Q[x_1,\ldots,x_m,y,z]$, where $y = y_1 + y_2$ and $z = y_1y_2$. The group $W$ is just $S_2$ that acts on $C$ by swapping $y_1$ and $y_2$. As in the general setting, we fix some integers $a_i$ and $b$ such that $\sum_{i=1}^m a_i + 2b = 1$, thus
$$
	l = \sum_{i=1}^m a_ix_i + by \in A.
$$

By Corollary \ref{remMinimal}, we have to find the minimal forbidden sub-dimension vectors of $d'$. We distinguish three cases (one of which does not appear at all).
\begin{itemize}
	\item[(a)] $d'_s = 0$. Then, $d'$ is minimal forbidden if and only if there exists exactly one index $1 \leq i \leq m$ such that $d'_{q_i} = 1$.
	\item[(b)] $d'_s = 1$. We obtain $\theta(d') = m -  2 \sharp \{ i \mid d'_{q_i} = 1 \}$, which is negative if and only if $\sharp \{ i \mid d'_{q_i} = 1 \} > r$. Moreover, $d'$ is minimal forbidden if and only if the set $\{ i \mid d'_{q_i} = 1\}$ has precisely $r+1$ elements.
	\item[(c)] $d'_s = 2$. In this case, $\theta(d') \geq 0$.
\end{itemize}

Now we turn to the elements $f^{d'}$ for some fixed minimal forbidden sub-dimension vector $d' \leq d$. Distinguish again between the cases (a) and (b).
\begin{itemize}
	\item[(a)] If $d'_s = 0$ then there exists a unique $i$ with $d'_{q_i} = 1$. Then, $f^{d'} = (y_1 - x_i)(y_2 - x_i) = z - yx_i + x_i^2$. In particular, $f^{d'} \in A$.
	\item[(b)] In case $d'_s = 1$, the set $I$ of all $i$ with $d'_{q_i} = 1$ has exactly $r+1$ elements. Let $I = \{i_1,\ldots,i_{r+1}\}$. We obtain
	$$
		f^{d'} = \prod_{\nu = 1}^{r+1} (y_2 - x_{i_\nu}) = \sum_{j=0}^{r+1} (-1)^j \sigma_j^{(I)} y_2^{{r+1}-j}
	$$
	where $\sigma_j^{(I)}$ denotes the $j$-th elementary symmetric function on the variables $x_{i_1},\ldots,x_{i_{r+1}}$.
\end{itemize}
We know that $C$ decomposes as $C = A\cdot y_2 \oplus A \cdot 1$. By definition, $y_1$ and $y_2$ are roots of the polynomial $p(t) = t^2 - yt + z \in A[t]$. We want to give a presentation of $y_2^k$ as a linear combination of $y_2$ and $1$ with coefficients in $A$. Let
$$
	y_2^k = \alpha_k y_2 + \beta_k
$$
for all $k \geq 0$. Then, we obtain that $(\alpha_k)$ and $(\beta_k)$ are sequences $(\gamma_k)$ of homogeneous elements in $A$ that fulfill the recursion
$$
	\gamma_k = y\gamma_{k-1} - z\gamma_{k-2}
$$
for $k \geq 2$, but with different initial values $(\alpha_0, \alpha_1) = (0,1)$ and $(\beta_0,\beta_1) = (1,0)$. The sequences $(\gamma_k)$ satisfying the two-term recursion above can be described with linear algebra methods: Consider the matrix
$$
	M = \begin{pmatrix} 0 & 1 \\ -z & y  \end{pmatrix}.
$$
Let $v^{(k)} := (\gamma_k,\gamma_{k+1})^T$ for all $k \geq 0$. Then obviously $v^{(k)} = M^k \cdot v^{(0)}$.

Back to our relations. We get for $d'$ and $I$ as in case (b) above
$$
	f^{d'} = \left(\sum_{j = 0}^{r+1} (-1)^j \sigma_j^{(I)} \alpha_{{r+1}-j} \right)y_2 + \left(\sum_{j=0}^{r+1} (-1)^j \sigma_j^{(I)} \beta_{{r+1}-j} \right).
$$
As $d'$ is forbidden, we obtain that $f^{d'} = 0$ and this implies
$$
	0 = \sum_{j = 0}^{r+1} (-1)^j \sigma_j^{(I)} \alpha_{{r+1}-j} \quad \text{and} \quad
	0 = \sum_{j=0}^{r+1} (-1)^j \sigma_j^{(I)} \beta_{{r+1}-j}
$$
in $A/\aa$. Thus, the ideal $\aa$ from Theorem \ref{thm} is generated by the expressions
\begin{itemize}
	\item[(l)] $\sum_{i=1}^m a_ix_i + by$,
	\item[(a)] $z - yx_i + x_i^2$ for all $1 \leq i \leq m$,
	\item[(b1)] $\sum_{j = 0}^{r+1} (-1)^j \sigma_j^{(I)} \alpha_{{r+1}-j}$, and
	\item[(b2)] $\sum_{j = 0}^{r+1} (-1)^j \sigma_j^{(I)} \beta_{{r+1}-j}$ for all $I \sub \{1,\ldots,m\}$ with ${r+1} := \sharp I > r$.
\end{itemize}
Here, $a_i$ and $b$ are integers such that $a_1 + \ldots a_m + 2b = 1$. Theorem \ref{thm} states that there exists an isomorphism $\Q[x_1,\ldots,x_m, y,z]/\aa \cong A(M^\theta)$ sending $x_i \mapsto c_1(U_{q_i})$, $y \mapsto c_1(U_s)$, and $z \mapsto c_2(U_s)$, where $U$ is the universal representation corresponding to the character of weight one according to the integers $a_i$ and $b$.

We see that the presentation of the Chow ring depends essentially on the choice of a character of weight one. In this case, there are two ``reasonable choices''.

Let $a_i = \delta_{i,m}$ the Kronecker symbol and let $b = 0$. Then, the generator $l = \sum_i a_i x_i + by$ is just $l = x_m$. We consider the image of $z - yx_i + x_i^2$ in the quotient $A/A\cdot l$. For $i = m$, we obtain that $z - yx_i + x_i^2$ maps to $z$. Therefore, we have an isomorphism of $A(M^\theta)$ to $\Q[x_1,\ldots,x_{m-1},y]/\aa$, where $\aa$ is generated by expressions
\begin{itemize}
	\item[(a)] $yx_i - x_i^2$ for all $1 \leq i \leq m-1$,
	\item[(b1)] $\sum_{j = 0}^{r+1} (-1)^j \sigma_j^{(I)} \alpha_{{r+1}-j}$, and
	\item[(b2)] $\sum_{j = 0}^{r+1} (-1)^j \sigma_j^{(I)} \beta_{{r+1}-j}$ for all $I \sub \{1,\ldots,m\}$ with $r+1$ elements.
\end{itemize}
Here, we have (formally) $x_m := 0$ and $(\alpha_k)$, $(\beta_k)$ fulfill the recursions $\alpha_k = y\alpha_{k-1}$ and $\beta_k = y\beta_{k-1}$ for all $k \geq 2$. As $\beta_1 = 0$, we obtain that all $\beta_k$ vanish if $k \geq 1$, so
$
	\beta_k = \delta_{k,0}.
$
As $\alpha_1 = 1$, we get $\alpha_k = y^{k-1}$ for $k \geq 1$. By definition, we have $\alpha_0 = 0$ and thus,
$
	\alpha_k = (1 - \delta_{k,0})y^{k-1}.
$
Therefore, the expressions (b1) and (b2) read as follows: For all subsets $I \sub \{1,\ldots,m\}$ with ${r+1}$ elements, we have
\begin{itemize}
	\item[(b1)] $\sum_{j = 0}^{r} (-1)^j \sigma_j^{(I)} y^{r-j}$ and
	\item[(b2)] $\prod_{i \in I} x_i$
\end{itemize}
lying in $\aa$. Note that in (b2), we have omitted a sign. If $m \in I$, we can reformulate (b1). Let $I' := I - \{m\}$. We obtain $\sigma_j^{(I)} = \sigma_j^{(I')}$ for all $j \leq r$ and thus,
$$
	\sum_{j = 0}^{r} (-1)^j \sigma_j^{(I)} y^{r-j} = \sum_{j = 0}^{r} (-1)^j \sigma_j^{(I')} y^{r-j} = \prod_{i \in I'} (y-x_i).
$$
This implies that for $m \notin I$, the product $\prod_{i \in I} (y - x_i)$ is also contained in the ideal $\aa$. But we can rewrite this as
$$
	\prod_{i \in I} (y - x_i) = \sum_{j=0}^{r+1} (-1)^j \sigma_j^{(I)} y^{{r+1}-j} = (-1)^{r+1} \prod_{i \in I} x_i + y \cdot \left( \sum_{j=0}^{r} (-1)^j \sigma_j^{(I)} y^{r-j} \right)
$$
and thereby, the expressions of type (b2) are already contained in $\aa$ if all of type (b1) are. In summary, we get the following presentation of $A(M^\theta)$:

\begin{cor} \label{firstPres_subspace}
	The ring $A(M^\theta)$ is isomorphic to $\Q[x_1,\ldots,x_{m-1},y]/\aa$, where $\aa$ is the ideal generated by the expressions
	\begin{itemize}
		\item[(a)] $x_i(y-x_i)$ for all $1 \leq i \leq m-1$,
		\item[(b1')] $\prod_{i \in I'} (y-x_i)$, and
		\item[(b1'')] $\sum_{j = 0}^{r} (-1)^j \sigma_j^{(I)} y^{r-j}$
	\end{itemize}
	for all $I', I \sub \{1,\ldots,m-1\}$ with $\sharp I' = r$ and $\sharp I = r+1$.
\end{cor}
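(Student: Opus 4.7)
The plan is to specialize the general presentation of $A(M^\theta)$ supplied by Theorem~\ref{thm} (in the form of Corollary~\ref{remMinimal}) to the subspace quiver with the character of weight one determined by $a_i = \delta_{i,m}$, $b = 0$, and then collapse the resulting list of relations. By Corollary~\ref{remMinimal}, I only need the tautological relations for the minimal $\theta$-forbidden sub-dimension vectors $d' \leq d$. The classification carried out above splits these into case (a), where $d'_s = 0$ and exactly one source is populated, producing the relation $z - y x_i + x_i^2$ for each $1 \leq i \leq m$, and case (b), where $d'_s = 1$ and the set $I := \{\, i \mid d'_{q_i} = 1\,\}$ has $\sharp I = r+1$, producing the two coefficient relations $\sum_{j=0}^{r+1}(-1)^j \sigma_j^{(I)} \alpha_{r+1-j}$ and $\sum_{j=0}^{r+1}(-1)^j \sigma_j^{(I)} \beta_{r+1-j}$ with respect to the $A$-basis $\{y_2, 1\}$ of $C$.

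For the chosen character the linear relation is simply $l = x_m$, so modulo $l$ I set $x_m = 0$ and pass to the ring $\Q[x_1,\ldots,x_{m-1}, y, z]$. Applying the case-(a) relation for $i = m$ then gives $z \in \aa$, which reduces the ambient ring further to $\Q[x_1,\ldots,x_{m-1}, y]$ and turns the remaining (a)-relations into $x_i(y - x_i)$ for $1 \leq i \leq m-1$, matching (a) in the corollary. Next, with $z = 0$ the two-term recursion $\gamma_k = y\gamma_{k-1} - z\gamma_{k-2}$ collapses to $\gamma_k = y\gamma_{k-1}$, so the initial values force $\alpha_k = (1 - \delta_{k,0}) y^{k-1}$ and $\beta_k = \delta_{k,0}$. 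Substituting these into the two families of case-(b) relations produces the simpler pair
$$
\sum_{j=0}^{r}(-1)^j \sigma_j^{(I)} y^{r-j} \qquad\text{and}\qquad (-1)^{r+1}\prod_{i \in I} x_i,
$$
one pair for each subset $I \subseteq \{1,\ldots,m\}$ with $\sharp I = r+1$.

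Finally, I would show that these simplified case-(b) relations are generated by the expressions (b1') and (b1'') of the corollary, splitting on whether $m \in I$. When $m \in I$, set $I' := I \setminus \{m\}$ of size $r$; because $x_m = 0$, one has $\sigma_j^{(I)} = \sigma_j^{(I')}$ for $j \leq r$ and $\sigma_{r+1}^{(I)} = x_m \sigma_r^{(I')} = 0$, so the first relation becomes $\prod_{i \in I'}(y - x_i)$, a relation of type (b1'), while the second relation $\prod_{i \in I} x_i$ vanishes identically. When $m \notin I$, the first relation is exactly of type (b1''), and the polynomial identity
$$
\prod_{i \in I}(y - x_i) \;=\; (-1)^{r+1}\prod_{i \in I} x_i \;+\; y \cdot \sum_{j=0}^{r}(-1)^j \sigma_j^{(I)} y^{r-j}
$$
exhibits $\prod_{i \in I} x_i$ as an $A$-linear combination of the relation $\prod_{i \in I}(y-x_i)$ (which itself lies in $\aa$ via (b1'') by expanding its top-degree term away) and a (b1'')-relation, so it adds nothing new.

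The main obstacle is the bookkeeping: keeping track of which subsets index which relations, ensuring that no minimal $\theta$-forbidden sub-dimension vector has been missed (case (c), $d'_s = 2$, is ruled out because $\theta(d') \geq 0$ there), and verifying carefully that the two pairs of case-(b) relations are fully absorbed into (b1') and (b1'') after setting $x_m = z = 0$. Once this is confirmed, the map $\Q[x_1,\ldots,x_{m-1}, y]/\aa \to A(M^\theta)$ sending $x_i \mapsto c_1(U_{q_i})$ and $y \mapsto c_1(U_s)$ is the isomorphism claimed.
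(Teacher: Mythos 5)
Your proposal follows the paper's own derivation essentially step for step: specialize Corollary \ref{remMinimal} to the subspace quiver, take the character with $a_i = \delta_{i,m}$ and $b=0$ so that $l = x_m$, use the case-(a) relation at $i=m$ to kill $z$, collapse the recursion to get $\alpha_k = (1-\delta_{k,0})y^{k-1}$ and $\beta_k = \delta_{k,0}$, and then absorb the simplified (b1)/(b2) relations into (a), (b1'), (b1'') by splitting on whether $m \in I$. One justification as written is circular, though: for $m \notin I$ you claim that $\prod_{i \in I}(y-x_i)$ lies in $\aa$ ``via (b1'') by expanding its top-degree term away,'' but the identity
$$
\prod_{i \in I}(y - x_i) \;=\; (-1)^{r+1}\prod_{i \in I} x_i \;+\; y \cdot \sum_{j=0}^{r}(-1)^j \sigma_j^{(I)} y^{r-j}
$$
only shows that $\prod_{i \in I}(y-x_i)$ and $(-1)^{r+1}\prod_{i\in I}x_i$ agree modulo the (b1'')-relation for $I$ --- which is exactly the statement you are trying to establish, so it cannot also serve as the reason that $\prod_{i\in I}(y-x_i) \in \aa$. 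The correct (and the paper's implicit) argument is that the (b1') generators already force this: since $I \subseteq \{1,\ldots,m-1\}$ has $r+1$ elements, any $r$-element subset $I' \subset I$ yields a (b1') generator $\prod_{i\in I'}(y-x_i)$ dividing $\prod_{i\in I}(y-x_i)$, whence the latter lies in $\aa$ and the displayed identity then shows $\prod_{i\in I}x_i \in \aa$. With that one-line repair the argument is complete and coincides with the paper's.
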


This presentation is similar to the one in the paper of Hausmann and Knutson (cf. \cite{HK:98}). We describe it briefly: Let $\mathbb{S}$  be the direct product of $m$ copies of the $2$-sphere. Consider the diagonal action of $\mathrm{SO}_3$ on $\mathbb{S}$. Identifying $\mathfrak{so}_3^\vee$ with $\R^3$, this action has the moment map $\mu(p) = \sum_{i = 1}^m p_i$. The symplectic reduction $\mathrm{Pol} := \mu^{-1}(0)/\mathrm{SO}_3$ is called the polygon space. This space can be identified with $M^\theta(\C)$ regarded as a symplectic manifold.

\begin{thm}[Hausmann-Knutson] \label{hk}
	The cohomology ring $H^*(\mathrm{Pol})$ is isomorphic to the quotient $\Z[V_1,\ldots,V_{m-1},R]/\II$ of a polynomial ring with generators in degree $2$, and the ideal $\II$ is generated by the expressions
	\begin{itemize}
		\item[(R1)] $V_i^2 + RV_i$ for $i = 1,\ldots,m-1$,
		\item[(R2)] $\prod\limits_{i \in L} V_i$ for all subsets $L \sub \{1,\ldots,m-1\}$ with $\sharp L \geq r$ and
		\item[(R3)] $\sum\limits_{S \sub L,\ \sharp S \leq r-1} R^{\sharp(L-S)-1}\prod\limits_{i \in S} V_i$ for all subsets $L \sub \{1,\ldots,m-1\}$ with $\sharp L \geq r+1$.
	\end{itemize}
\end{thm}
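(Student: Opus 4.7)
The theorem is proved in \cite{HK:98} by a symplectic Morse-theoretic analysis of $\mathrm{Pol}$ using bending flows. An alternative approach, closer in spirit to the quiver-theoretic presentation of this paper, would be to first extract the Hausmann-Knutson presentation rationally from Corollary \ref{firstPres_subspace} via a nontrivial change of variables, and then upgrade to integer coefficients using a torsion-freeness argument.

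Step 1 (change of variables). The naive identification $V_i = x_i$ fails already for $r=2$: a direct computation using (a), (b1'), (b1'') yields $x_1 x_2 = \tfrac{1}{3} y^2$ in $A^*(M^\theta)_\Q$, whereas (R2) forces $V_1 V_2 = 0$. The correct identification is
\[
V_i := x_i - y \quad (1 \leq i \leq m-1), \qquad R := y.
\]
Under this substitution, relation (a), namely $x_i^2 = x_i y$, immediately yields $V_i^2 + R V_i = (x_i - y)^2 + y(x_i - y) = 0$, which is (R1). Relation (b1') becomes $(-1)^r \prod_{i \in L} V_i = 0$ for $|L| = r$, and multiplying by further $V_j$'s gives (R2) in full. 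For (R3), substitute $x_i = V_i + R$, $y = R$ into (b1''); using the identity $\sigma_j^{(I)}(V+R) = \sum_{k=0}^{j} \binom{|I|-k}{j-k} R^{j-k} \sigma_k^{(I)}(V)$ and the elementary combinatorial simplification $\sum_{l=0}^{n-1} (-1)^l \binom{n}{l} = (-1)^{n-1}$, the relation (b1'') collapses (up to an overall sign) to $\sum_{k=0}^{r} R^{r-k} \sigma_k^{(I)}(V) = 0$. Modulo (R2) the $k = r$ term $\sigma_r^{(I)}(V)$ vanishes, and what remains is exactly (R3) for $|L| = r+1$. An induction on $|L|$ using (R1) extends (R3) to all $|L| \geq r+1$. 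The reverse implications (recovering (a), (b1'), (b1'') from (R1), (R2), (R3)) are symmetric: (R1) implies (a) directly, (R2) for $|L| = r$ gives (b1'), and the $|L| = r+1$ case of (R3) together with (R2) gives (b1'') after the inverse change of variables.

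Step 2 (integrality). The rational isomorphism of Step 1 must be promoted to $\Z$. The obstacle is that Theorem \ref{thm} is stated rationally because the symmetrization map $p$ in Section 3 inverts $|W|$. To bypass this, establish directly that $H^*(\mathrm{Pol}; \Z)$ is torsion-free, for instance via a Bia\l{}ynicki-Birula decomposition associated with a generic one-parameter subgroup of the residual torus action on $M^\theta$: its fixed points are isolated (this being a torus quotient situation at the level of the toric degeneration in Section 3.1) and the induced cellular decomposition has only even-dimensional affine cells, hence torsion-free integral cohomology. Observe also that both sides of the Step-1 isomorphism are generated by integral classes and defined by integer-coefficient relations, and that the Hilbert series of $\Z[V_1, \ldots, V_{m-1}, R]/\II$ matches the Betti numbers of $\mathrm{Pol}$ (computable via the Bia\l{}ynicki-Birula decomposition or Harder-Narasimhan recursion). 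Torsion-freeness on both sides forces the rational isomorphism of Step 1 to be integral.

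The main obstacle is Step 2: the framework of Theorem \ref{thm} does not, in its current form, produce an integral presentation, so torsion-freeness must be supplied externally, either by the Morse-theoretic argument of \cite{HK:98} itself or by a cellular decomposition argument on the quiver moduli side. Granting torsion-freeness and the integral Hilbert-series match, the remaining content, Step 1, is an algebraic manipulation of elementary symmetric functions.
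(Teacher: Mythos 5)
The paper offers no proof of Theorem \ref{hk}: it is quoted verbatim (in its equal-weight form) from \cite[Theorem 5.5]{HK:98}, where it is proved by embedding the polygon space into a transverse self-intersection of a toric subvariety of a toric variety (not, as you say, by Morse theory on bending flows), and the paper explicitly remarks that identifying this ring with the one of Corollary \ref{firstPres_subspace} is \emph{hard} and does not attempt it. So your proposal is by construction a different route, and its Step 1 is exactly the comparison the paper declines to make. That step looks essentially correct over $\Q$: with $V_i = x_i - y$, $R = y$ one has $V_i^2 + RV_i = x_i^2 - x_iy$, the $\sharp L = r$ case of (R2) is $(-1)^r$ times (b1'), and I verified your key computation that (b1'') transforms into $(-1)^r\sum_{k=0}^{r} R^{r-k}\sigma_k^{(I)}(V)$ (the inner alternating binomial sum is indeed $(-1)^r$), whose $k=r$ term lies in the ideal generated by (R2), leaving (R3) for $\sharp L = r+1$. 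The reduction of (R3) for $\sharp L > r+1$ to these generators is only asserted; in examples it requires combining several of the $(r+1)$-relations and introduces denominators, so it should be written out and is in any case a statement about the $\Q$-span of the ideal. Granting that, Step 1 yields the rational isomorphism, which is genuinely more than the paper does.

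The gap is Step 2, and it is fatal for the theorem \emph{as stated}, which is over $\Z$. Torsion-freeness of $H^*(\mathrm{Pol};\Z)$ is not enough: to deduce that a degreewise surjection $\Z[V_1,\ldots,V_{m-1},R]/\II \to H^*(\mathrm{Pol};\Z)$ between graded groups of equal rank is injective, you need the \emph{source} to be torsion-free with the correct Hilbert function, and establishing that for the abstractly presented ring $\Z[V_\bullet,R]/\II$ is precisely the hard algebraic content of the integral statement; your proposal says nothing about it. Even the integral surjectivity needs the integral form of King--Walter, whereas Theorem \ref{king2} is stated with $\Q$-coefficients. Finally, the geometric input you invoke is misdescribed: for this subspace quiver the dimension vector is $(1,\ldots,1,2)$, so $M^\theta$ is not toric, and there is no ``toric degeneration in Section 3.1'' --- the toric variety appearing in the proof of Theorem \ref{thm} is the moduli space $\tilde Y$ of the covering quiver, a different variety, not a degeneration of $M^\theta$; a Bia{\l}ynicki-Birula argument would have to be set up on $M^\theta$ itself. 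In short: Step 1 is a worthwhile rational comparison that the paper itself avoids, but the proposal does not prove the integral theorem, and the honest proof remains the external one in \cite{HK:98}.
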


We should remark that this is the equal weight version of the main result \cite[Theorem 5.5]{HK:98}. In the actual theorem, a description of the cohomology ring of the polygon space associated to an $m$-tuple of positive weights is given. This result is proved by embedding the polygon space into a transverse self-intersection of a toric subvariety in some toric variety.\\
Although the presentation in Theorem \ref{hk} strongly resembles the presentation from Corollary \ref{firstPres_subspace}, it is hard to show that these two rings are isomorphic (over the rationals with a doubling of the grades).

The other ``reasonable choice'' is the following: We consider the numbers $a_i = 1$ for all $i$ and $b = -r$. This choice comes from viewing this subspace quiver as some covering quiver of a generalized Kronecker quiver with dimension vector $(2,2r+1)$.\\
Let $\sigma_j = \sigma_j(x_1,\ldots,x_m)$ for $j = 1,2$. The expression (l) reads $\sigma_1 - ry = 0$. Replacing $y$ with $1/r \cdot \sigma_1$, we get for (a)
$$
	z - \frac{1}{r}\sigma_1x_i + x_i^2 = 0,
$$
and summing over all $i$ yields
$$
	0 = mz - \frac{1}{r}\sigma_1^2 + \sigma_1(x_1^2,\ldots,x_m^2) = mz - \frac{1}{r}\sigma_1^2 + (\sigma_1^2 - 2\sigma_2),
$$
or equivalently
$$
	z = \frac{1-r}{mr}\sigma_1^2 + \frac{2}{m}\sigma_2.
$$
We get:

\begin{cor}
	The ring $A(M^\theta)$ is isomorphic to $\Q[x_1,\ldots,x_m]/\aa$, where $\aa$ is the ideal generated by the expressions
	\begin{itemize}
		\item[(a)] $(1-r)\sigma_1^2 + 2r\sigma_2 - m\sigma_1x_i + mrx_i^2$ for all $1 \leq i \leq m$,
		\item[(b1)] $\sum_{j = 0}^{r+1} (-1)^j \sigma_j^{(I)} \alpha_{{r+1}-j}$, and
		\item[(b2)] $\sum_{j = 0}^{r+1} (-1)^j \sigma_j^{(I)} \beta_{{r+1}-j}$
	\end{itemize}
	for all $I \sub \{1,\ldots,m\}$ with ${r+1}$ elements and $(\alpha_k)$ and $(\beta_k)$ are sequences $(\gamma_k)$ that satisfy
	$$
		\gamma_k = \frac{1}{r}\sigma_1\gamma_{k-1} - \left( \frac{1-r}{mr}\sigma_1^2 + \frac{2}{m}\sigma_2 \right)\gamma_{k-2},
	$$
	and $(\alpha_0,\alpha_1) = (0,1)$, $(\beta_0,\beta_1) = (1,0)$.
\end{cor}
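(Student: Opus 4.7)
The plan is simply to execute the elimination of $y$ and $z$ that the derivation just before the statement began, and to verify that the resulting ideal matches the one claimed.

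First, with $a_i = 1$ and $b = -r$, the linear relation takes the form $l = \sigma_1 - ry$ where $\sigma_1 = x_1 + \ldots + x_m$; since $r \neq 0$ and we work over $\Q$, this lets me replace $y$ throughout by $\sigma_1/r$, reducing the ambient ring from $\Q[x_1,\ldots,x_m,y,z]$ to $\Q[x_1,\ldots,x_m,z]$. Next, averaging the (a)-relations $z - yx_i + x_i^2 = 0$, using $y\sigma_1 = \sigma_1^2/r$ and $\sum_i x_i^2 = \sigma_1^2 - 2\sigma_2$, gives
\[
  z = \frac{1-r}{mr}\sigma_1^2 + \frac{2}{m}\sigma_2,
\]
which I would use to eliminate $z$ as well.

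Substituting this expression for $z$ into each individual (a)-relation and clearing denominators by multiplying through by $mr$ produces precisely the (a)-relations of the corollary. The (b1) and (b2) relations are governed by the sequences $(\alpha_k)$ and $(\beta_k)$, which are entirely determined by the two-term recursion $\gamma_k = y\gamma_{k-1} - z\gamma_{k-2}$ together with the initial values $(\alpha_0,\alpha_1) = (0,1)$ and $(\beta_0,\beta_1) = (1,0)$. Substituting the computed expressions for $y$ and $z$ into this recursion yields exactly the recursion stated in the claim, while leaving the initial values unchanged; hence (b1) and (b2) retain their form with the new recursion.

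The main obstacle---really the only thing that needs careful justification---is that this elimination is rigorous, i.e., that the surjection $\Q[x_1,\ldots,x_m,y,z] \twoheadrightarrow \Q[x_1,\ldots,x_m]$ that sends $y$ and $z$ to the values above maps $\aa$ onto the ideal in the statement. Equivalently, one must confirm that the $m$ averaged (a)-relations do not over-determine the quotient: a single relation among them has been spent to define $z$, and so exactly one linear redundancy among the new (a)-relations should remain. The required consistency check is that the sum of the $m$ new (a)-relations is identically zero, which a direct expansion using $\sum_i x_i = \sigma_1$ and $\sum_i x_i^2 = \sigma_1^2 - 2\sigma_2$ confirms. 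Once this is in hand, the isomorphism $A(M^\theta) \cong A/\aa$ provided by Theorem \ref{thm} descends to the presentation in the corollary.
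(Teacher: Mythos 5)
Your proposal is correct and follows exactly the route the paper takes: substitute $y = \sigma_1/r$ using the linear relation, average the (a)-relations to solve for $z$, and push both substitutions through the (a)-relations and the recursion defining $(\alpha_k)$, $(\beta_k)$. The extra consistency check you add (that the sum of the new (a)-relations vanishes, so the relation spent on eliminating $z$ introduces no new condition) is a correct and welcome bit of rigor that the paper leaves implicit.
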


This might look more frightening than the presentation calculated before, but the latter has the advantage of preserving the natural $S_m$-action on the moduli space.

The first presentation can be used to read off the Poincar\'{e} polynomial of $\AA := A(M^\theta)$. We observe that these defining relations are homogeneous of degree 2 for type (a) and of degree $\geq r$ for the other types (b1') and (b1''). We know that
$$
	\dim M^\theta = \dim R - \dim G + \dim \Gamma = 2m - (m+4) +1 = m-3 = 2(r-1).
$$
By Poincar\'{e} duality, $\AA^i \cong \AA^{2(r-1)-i}$ for every $0 \leq i \leq r-1$, so we just have to determine the dimensions of $\AA^0,\ldots,\AA^{r-1}$. But in these degrees, $\AA$ coincides with the quotient of the polynomial ring
$$
	\Q[x_1,\ldots,x_{m-1},y]/(yx_i - x_i^2 \mid 1 \leq i \leq m).
$$
Now, we verify almost at once that $yx_1 - x_1^2,\ldots,yx_{m-1} - x_{m-1}^2$ is a regular sequence, and thus, the Poincar\'{e} series of the latter ring computes as
$$
	\frac{(1-t^2)^{m-1}}{(1-t)^m} = \frac{(1+t)^{m-1}}{1-t} = \sum_{j=0}^{\infty} \left( \sum_{\nu = 0}^{\min\{j,m-1\}} \binom{m-1}{\nu} \right) t^j = \sum_{j=0}^{\infty} \left( \sum_{\nu = 0}^{\min\{j,2r\}} \binom{2r}{\nu} \right) t^j.
$$
This proves a result of Kirwan (cf. \cite[16.1]{Kirwan:84}):

\begin{cor}
	The Poincar\'{e} polynomial of $A(M^\theta)$ is
	$$
		\sum_{j=0}^{2(r-1)} \left( \sum_{\nu = 0}^{\min\{j,2(r-1)-j\}} \binom{2r}{\nu} \right) t^j.
	$$
\end{cor}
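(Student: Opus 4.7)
The plan is to compute the Hilbert series of $\AA := A(M^\theta)$ by using the presentation from Corollary \ref{firstPres_subspace} and then extracting dimensions degree by degree. Since $M^\theta$ is smooth and projective of dimension $m-3 = 2(r-1)$, Poincaré duality gives $\AA^i \cong \AA^{2(r-1)-i}$, so it suffices to determine $\dim_\Q \AA^i$ for $0 \leq i \leq r-1$ and then reflect.

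The next step is to exploit the degrees of the defining relations in the presentation. The type (a) relations $x_i(y-x_i)$ are homogeneous of degree $2$, whereas both families in (b1') and (b1'') are homogeneous of degree $r$. Consequently, in every degree $i \leq r-1$ the quotient is insensitive to (b1') and (b1''), so $\AA^i$ coincides with the degree-$i$ component of the auxiliary graded ring
\[
R := \Q[x_1,\ldots,x_{m-1},y]\,\big/\,\bigl(x_i(y-x_i)\ :\ 1 \leq i \leq m-1\bigr).
\]

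The heart of the argument is to verify that $x_1(y-x_1),\ldots,x_{m-1}(y-x_{m-1})$ form a regular sequence in $S := \Q[x_1,\ldots,x_{m-1},y]$. The cleanest route is via Cohen-Macaulayness of the polynomial ring $S$: since $S$ has Krull dimension $m$, it is enough to check that the common zero locus of these $m-1$ elements has pure dimension $1$. But that locus is the union of the $2^{m-1}$ lines obtained by choosing, for each $i$, either $x_i = 0$ or $x_i = y$; each such line is one-dimensional, so we are in the Cohen-Macaulay/Koszul situation and the sequence is regular. The Hilbert series of $R$ therefore factors as
\[
\frac{(1-t^2)^{m-1}}{(1-t)^m} \;=\; \frac{(1+t)^{m-1}}{1-t} \;=\; \frac{(1+t)^{2r}}{1-t},
\]
using $m-1 = 2r$.

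To conclude, expanding this rational function yields the coefficient of $t^j$ equal to $\sum_{\nu=0}^{\min\{j,2r\}}\binom{2r}{\nu}$. For $j \leq r-1$ this reduces to $\sum_{\nu=0}^{j}\binom{2r}{\nu}$, which matches $\sum_{\nu=0}^{\min\{j,2(r-1)-j\}}\binom{2r}{\nu}$ in that range; invoking Poincaré duality fills in the remaining degrees $j \geq r$, where $\min\{j,2(r-1)-j\} = 2(r-1)-j$, and produces the claimed formula. The only non-formal step in the outline is the regularity of the sequence $\{x_i(y-x_i)\}$, and this is where I would focus attention, but the explicit description of the intersection locus as a finite union of lines makes even that step essentially immediate.
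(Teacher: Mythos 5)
Your argument is correct and follows the paper's proof essentially verbatim: truncate the presentation of Corollary \ref{firstPres_subspace} in degrees below $r$, observe that only the degree-$2$ relations $x_i(y-x_i)$ survive there, compute the Hilbert series $(1+t)^{2r}/(1-t)$ from the regular sequence, and fill in the upper half by Poincar\'{e} duality. The only difference is that you justify the regularity of the sequence (via the Cohen--Macaulay property and the zero locus being a union of $2^{m-1}$ lines), a step the paper dispatches with ``we verify almost at once''; your justification is sound.
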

\subsection{Generalized Kronecker quivers}
Let $Q$ be the generalized Kronecker quiver with $r$ arrows $\alpha_1,\ldots,\alpha_r$ pointing from the source $q$ to the sink $s$. Let $d = (m,n)$ with coprime positive integers $n$ and $m$. In a picture:
\begin{center}
	\begin{tikzpicture}[description/.style={fill=white,inner sep=2pt}]
	\matrix[column sep={3em}]
	{
		\node(a) {$\bullet$}; & \node(b) {$\bullet$}; \\
	};
	\path[->] (a) edge[bend left=50] (b)
		edge[bend left=30] (b)
		edge[bend right=50] (b);
	\node at ($(a) + (2.1em,0em)$) {$\vdots$};
	\node[below] at (a.south) {$m$};
	\node[below] at (b.south) {$n$};
\end{tikzpicture}
\end{center}
Without loss of generality, we may assume that $m \leq n$ as the moduli space of the quiver with reverted arrows is isomorphic to the original moduli space (with an obvious modification of the stability condition). The canonical stability condition $\theta$ is given by
$$
	\theta(m',n') = r\cdot(mn' - nm').
$$
As $m$ and $n$ are coprime, $d$ is $\theta$-coprime. The moduli space $M^\theta$ is often denoted $N(r; m,n)$ and called a Kronecker module. Their cohomology has already been studied by Drezet \cite[III-VI]{Drezet:88} and also by Ellingsrud and Str\o{}mme \cite[Section 6]{ES}.

The ring $C$ is, in this case, given by $C = \Q[t_1,\ldots,t_m,s_1,\ldots,s_n]$, where $t_i := t_{q,i}$ and $s_j := t_{s,j}$. Furthermore, $A = \Q[x_1,\ldots,x_m,y_1,\ldots,y_n]$, where $x_i$ is the $i$-th elementary symmetric polynomial in the variables $t_1,\ldots,t_m$ and $y_j$ is the $j$-th in $s_1,\ldots,s_n$. The Weyl group $W$ is the product $W = S_m \times S_n$ acting separately on the $t_i$ and the $s_j$ in the usual manner. Choose integers $a$ and $b$ with $am + bn = 1$. Then
$$
	l = ax_1 + by_1.
$$

Let us work out the forbidden sub-dimension vectors. We see that a sub-dimension vector $d' = (m',n')$ of $d$ is forbidden if and only if
$$
	n' < \left\lceil \frac{nm'}{m} \right\rceil.
$$
The corresponding $f^{d'} \in C$ computes as
$$
	f^{d'} = \prod_{i = 1}^{m'} \prod_{j=n'+1}^n (s_j - t_i)^r.
$$
Moreover, the forbidden sub-dimension vector $d'$ is minimal if and only if $n' = \left\lceil {nm'}/{m} \right\rceil - 1$.
Fix a minimal forbidden sub-dimension vector $d' = (m',n')$. Note that different values for $m'$ yield different values of the expression $\left\lceil {nm'}/{m} \right\rceil$, as $m \leq n$ by assumption. There are $m$ different polynomials
$$
	f^{(m')} := f^{d'} = \prod_{i = 1}^{m'} \prod_{j=\left\lceil {nm'}/{m} \right\rceil}^n (s_j - t_i)^r
$$
for $m' = 1,\ldots,m$. Letting $\tau_{(\mu,\nu)}(m') := \tau_{(\mu,\nu)}(d')$ be the tautological relations with respect to the basis $\{ t^\mu s^\nu \mid (\mu,\nu) \in \Delta \}$, we obtain with Theorem \ref{thm} that $A(M^\theta)$ is isomorphic to $A/\aa$, and $\aa$ is generated by
\begin{enumerate}
	\item $ax_1 + by_1$ and
	\item $\tau_{(\mu,\nu)}(m')$ for $1 \leq m' \leq m$ and $(\mu,\nu) \in \Delta$.
\end{enumerate}

In parts V and VI of Drezet's paper \cite{Drezet:88}, a recursive formula for the Betti numbers of some classes of Kronecker modules is given after showing (cf. \cite[Theorem 1]{Drezet:88}) that their cohomology with integral coefficients is torsion free and finitely generated. Ellingsrud and Str\o{}mme (cf. \cite[Theorem 6.9]{ES}) obtain a presentation of the Chow ring as the quotient of $A$ by the image $p(\cc)$, where $p: C \to A$ is the symmetrization map (as in the proof of Theorem \ref{thm}) and $\cc$ is the ideal generated by $l$ and all $wf^{d'}$. However, as $p$ is not multiplicative but just $A$-linear, $p(\cc)$ is not generated by the generators of $\cc$. They are hard to calculate by actually using $p$ and their number grows with a factor of $m!n!$.

Let us turn to some special cases.

\begin{ex} \label{ex_grass}
	Let $m=1$ and $r \geq n$. Then, the moduli space $M^\theta$ can easily be identified with the Grassmannian $\Gr_{r-n}(\kk^r)$. We choose $a = 1$ and $b = 0$ and obtain $x_1 \in \aa$. Therefore, $\AA := A(M^\theta)$ is isomorphic to $\Q[y_1,\ldots,y_n]/\aa'$ by sending $x_1$ to $0$. The ideal $\aa'$ is generated by the coefficients of $f^{(1)}(0,s_1,\ldots,s_n)$ in terms of the basis $s^\lambda$ with $\lambda_i \leq i-1$. We get
	$$
		f^{(1)}(0,s_1,\ldots,s_n) = s_n^r.
	$$
	Abbreviate $s := s_n$. We have $s^n = \sum_{i=1}^n (-1)^{i-1} y_is^{n-i}$. Letting $(\beta_j^{(\nu)} \mid \nu)$ be sequences such that $s^\nu = \sum_{i=1}^n \beta_i^{(\nu)} s^{n-i}$ for all $\nu \geq 0$, we obtain that these sequences fulfill the recursion
	$$
		\beta_j^{(\nu)} = \sum_{i=1}^n (-1)^{i-1}y_i\beta_j^{(\nu-i)}
	$$
	for $\nu \geq n$, and that have the initial values $\beta_j^{(\nu)} = \delta_{j,n-\nu}$ for $0 \leq \nu \leq n-1$. Using this terminology, the ideal $\aa'$ is given as
	$$
		\aa' = ( \beta_1^{(r)},\ldots,\beta_n^{(r)} ).
	$$
	We can determine these $\beta_j^{(r)}$ using Linear Algebra methods. Let $B$ be the $(n \times n)$-matrix
	$$
		B = \begin{pmatrix}
		    	0		& 1		&		& \\
					& \ddots	& \ddots	& \\
					&		& 0		& 1 \\
			(-1)^{n-1}y_n	& \ldots	& -y_2		& y_1 
		    \end{pmatrix}.
	$$
	Let $v_j^{(\nu)} := (\beta_j^{(\nu)},\ldots,\beta_j^{(\nu-1+n)})^T $. Then, $v_j^{(\nu)} = B^\nu v_j^{(0)} = B^\nu e_j$, and thus, $\beta_j^{(r)}$ is the last entry of the vector $B^{r-n}e_j$.\\
	The above description of the Chow ring $\AA$ is similar to one that is already known (cf. \cite[Example 14.6.6]{Fulton:98} and \cite[Theorem 1]{Groth:58:inter}): Let $c(t) = 1 + y_1t + \ldots +y_nt^n \in A[t]$. We can describe $\AA$ as the ring generated by $y_1,\ldots,y_n$ modulo the relations contained in the condition that the formal power series $c(t)^{-1} = \sum_i \delta_i t^i \in A[[t]]$ is actually a polynomial of degree at most $r-n$. This means $\AA$ is isomorphic to $\Q[y_1,\ldots,y_n]/(\delta_{r-n+1},\ldots,\delta_r)$. We get $\delta_0 = 1$ and for $\nu > 0$,
	$$
		\delta_\nu = (-1)^\nu \begin{vmatrix}
		                  	y_1    & y_2 &        & \ldots & y_\nu \\
		                  	1      & y_1 & y_2    & \ldots & y_{\nu-1} \\
		                  	0      & 1   & y_1    & \ddots & \vdots \\
		                  	\vdots &     & \ddots & \ddots & y_2 \\
		                  	0      & \ldots& 0    & 1      & y_1
		                  \end{vmatrix} =: (-1)^{\nu-1} d_\nu
	$$
	(defining $y_i := 0$ for $i > n$). Using Laplace's formula, we see that $d_\nu = \sum_{i=1}^n (-1)^{i-1} y_id_{\nu-i}$ for $\nu \geq n$. Letting $d_{-n+1} = \ldots = d_{-1} = 0$, this formula holds also true for every $\nu > 0$. Let $w^{(\nu)} := (d_{\nu-n+1},\ldots,d_\nu)^T$. We get that $d_{r-n+j}$ is the last entry of $B^{r-n}w^{(j)}$. As $w^{(0)},\ldots,w^{(n)}$ is also a basis of $\Q[y_1,\ldots,y_n]^n$, we have shown that
	$$
		( \beta_1^{(r)},\ldots,\beta_n^{(r)} ) = (d_{r-n+1},\ldots,d_r) = (\delta_{r-n+1},\ldots,\delta_r).
	$$
\end{ex}

\begin{ex} \label{ex_kronecker_23}
	Let $d = (2,3)$ and $r = 3$. Here, we have $f^{(1)} = (s_3 - t_1)^3(s_3 - t_2)^3$ and $f^{(2)} = (s_2 - t_1)^3(s_3 - t_1)^3$. A basis of $C = \Q[t_1,t_2, s_1, s_2, s_3]$ considered as an $A = \Q[x_1,x_2, y_1, y_2, y_3]$-module is given by the elements
	$$
		t_2^{\lambda_2} s_2^{\mu_2} s_3^{\mu_3},
	$$
	with $0 \leq \lambda_2,\mu_2 \leq 1$ and $0 \leq \mu_3 \leq 2$. Display the polynomials $f^{(1)}$ and $f^{(2)}$ as linear combinations of these basis vectors. We obtain presentations
	$$
		f^{(m')} = \sum_{0 \leq \lambda_2,\mu_2 \leq 1}\ \sum_{0 \leq \mu_3 \leq 2} \tau_{\lambda_2,\mu_2,\mu_3}(m') \cdot t_2^{\lambda_2}s_2^{\mu_2}s_3^{\mu_3}
	$$
	for $m' = 1,2$. We choose a linear relation, i.e. we choose integers $a$ and $b$ with $2a + 3b = 1$. For example, let $a = -1$ and $b = 1$. The linear relation thus obtained is $l = y_1 - x_1$. In order to simplify the tautological relations, we calculate in the ring $A/A\cdot l$, meaning we replace $x_1$ by $y_1$. Using \textsc{Singular} (you really do not want to do this by hand), we obtain, after a couple of simplifications, that $A(M^\theta)$ is isomorphic to $\Q[x_2,y_1,y_2,y_3]/\aa$, where $\aa$ is generated by
	\begin{center}
		\parbox{.45\textwidth}%
		{%
			\begin{itemize}
				\item $3x_2^2 - 3x_2y_2 + y_2^2 - y_1y_3$,
				\item $(3x_2 - 2y_2)y_3$,
				\item $x_2^3 - y_1y_2y_3 + y_3^2$,
				\item $-4x_2y_1 + y_1^3 + 3y_3$,
				\item $3x_2^2 - x_2y_1^2$,
			\end{itemize}
		} \hfill%
		\parbox{.45\textwidth}%
		{%
			\begin{itemize}
				\item $3x_2^2 + x_2y_2 - y_1^2y_2$,
				\item $x_2y_1y_2 - 3y_2y_3$,
				\item $3y_1^2 - 5y_2y_3$, and
				\item $x_2^3 - x_2y_1y_3$.
			\end{itemize}
		}%
	\end{center}
\end{ex}

	\bibliographystyle{abbrv}
	\bibliography{../../../../../sty-Files/Literature}
\end{document}